\documentclass[11pt,thmsa]{article}

%
% some macro packages
%
%latex -interaction=nonstopmode %.tex
\usepackage{amssymb,latexsym}
\usepackage{amsmath,amscd,amsthm, amssymb, latexsym}
\usepackage[all]{xy}

% page dimensions

\setlength{\topmargin}{1.2cm}
\setlength{\parindent}{10pt}
\setlength{\textwidth}{16cm}
\setlength{\textheight}{22.5cm}
\setlength{\hoffset}{-1.5cm}
\setlength{\voffset}{-2cm}

 \usepackage[latin1]{inputenc}
 \usepackage{mathrsfs}
 \usepackage{dsfont}
 \usepackage{amsmath}
 \usepackage{amsthm}
 \usepackage{amsfonts}
 \usepackage{amssymb}
 \usepackage[dvips]{graphicx}
 \usepackage{wrapfig}
 \usepackage{layout}
 \usepackage{verbatim}
 \usepackage{alltt}
 \usepackage{xypic}
 \input xy
 \xyoption{all}

 \DeclareMathAlphabet{\mathpzc}{OT1}{pzc}{m}{it}

% \makeatletter
 %\def\R{\mathds{R}}
% \makeatother

% \makeatletter
% \def\Z{\mathds{Z}}
 %\makeatother

 %\makeatletter
% \def\T{\mbox{T\hspace{-.47em}T}}
% \makeatother

% \makeatletter
% \def\C{\mathds{C}}
% \makeatother

% \makeatletter
 %\def\te{\theta}
% \makeatother

 \newtheorem{theorem}{Theorem}[section]
 \newtheorem{lemma}[theorem]{Lemma}
 \newtheorem{proposition}[theorem]{Proposition}
 
 \newtheorem{corollary}[theorem]{Corollary}
 \newtheorem{definition}[theorem]{Definition}
  \theoremstyle{definition}
 \newtheorem{example}[theorem]{Example}
 \newtheorem{remark}[theorem]{Remark}

\newtheorem*{theorem1}{Theorem \ref{vanishing}}
\newtheorem*{theorem2}{Theorem \ref{main theorem}}
\newtheorem*{theorem3}{Corollary \ref{vanishingBS}}
\newtheorem*{theorem4}{Theorem \ref{theorem spectral sequence}}

\newtheorem*{acknowledgements}{Acknowledgements}

%(CHERCHER LE PACKAGE TUGBOAT.STY et utiliser la commande $\backslash$ sfrac)
%\renewcommand{\thethm}{\arabic{chapter}.\arabic{section}.\arabic{theorem}}
%\renewenvironment{proof}{\noindent{\it}}{\bgroup\hspace{\stretch{1}}$\square$\egroup\medskip\par}
%\renewcommand{\theequation}{\arabic{chapter}.\arabic{equation}}
\newcommand{\rmap}{\longrightarrow}
\newcommand{\lmap}{\longleftarrow}

\newcommand{\uHom}{\mathrm{Hom}}

\newcommand{\uRep}{\mathrm{Rep}}
\newcommand{\uRRep}{\mathcal{R}\mathrm{ep}}

\newcommand{\uAd}{\mathrm{Ad}}

\newcommand{\uC}{C}

\newcommand{\uHdiff}{\mathrm{H}_{\mathrm{diff}}}

\newcommand{\derham}{\mathrm{d_{DR}}}
\newcommand{\udh}{\mathrm{d_h}}
\newcommand{\uEnd}{\mathrm{End}}
\newcommand{\uS}{\mathcal{S}}
\newcommand{\uId}{\mathrm{Id}}

\begin{document}
\vspace{15cm}
 \title{Representations up to homotopy and Bott's spectral sequence for Lie groupoids  }
\author{ Camilo Arias Abad\footnote{Institut f\"ur Mathematik, Universit\"at Z\"urich,
camilo.arias.abad@math.uzh.ch. Partially supported by NWO Grant ``Symmetries and Deformations in Geometry'' and
by SNF Grant 20-113439. } and Marius Crainic\footnote{Mathematics Institute, Utrecht University, M.Crainic@math.uu.nl. Partially supported by NWO Grant ``Symmetries and Deformations in Geometry''.}\\
} \maketitle
 \maketitle
 \abstract{}

This belongs to a series of papers devoted to the study
of the cohomology of classifying spaces of Lie groupoids. 
Our aim here is to introduce and study the notion of representation up to homotopy of Lie groupoids, the resulting derived category,
and to show that the adjoint representation is well defined as a representation up to homotopy.
As an application, we extend Bott's spectral sequence converging to the
cohomology of classifying spaces from Lie groups to Lie groupoids. 
Our work is closely related to and inspired by Behrend
\cite{BEH1}, Bott \cite{Bott}, and Getzler \cite{Get}.

 \tableofcontents

\section{Introduction}

This work belongs to a series of papers whose original motivation was the study of the cohomology of classifying spaces
of Lie groupoids. Our aim is to introduce and study the notion of representation up to homotopy of Lie groupoids; as an application,
we show how the Bott spectral sequence converging to the cohomology of the classifying space can be extended 
to general Lie groupoids.

Lie groupoids are intimately related to various branches of geometry and topology, such as 
the transversal geometry of foliations, Lie theory  and
Poisson geometry, to mention a few. Also, via their convolution algebras, they are 
the main source of examples of noncommutative spaces. Lie groupoids can  be 
viewed as global counterparts of Lie algebroids, in the same way that Lie groups are
related to Lie algebras. They very often arise after an integration process
(simplest example: the flow of a vector field). \\

{\bf Representations up to homotopy:} 
One of the two main themes of this paper is the concept of representation up to homotopy.
This notion is new even in the case of Lie groups; while the notion of representation 
has a clear extension to the framework of Lie groupoids, there are in general very
few such representations. For instance, in the context of ordinary representations there is no good notion of an
adjoint representation- an object which, as we will see, is important for understanding the cohomology
of classifying spaces. What we propose here is the notion of representation up
to homotopy and the resulting categories $\uRRep^{\infty}(G)$ associated to groupoids $G$.
We also point out that, algebraically, this is just a shadow of a DG category, and a derived
category $\mathcal{D}(G)$ appears as a more appropriate invariant of $G$. 

The idea behind representations up to homotopy is to represent the groupoid $G$ not in a vector bundle, but in a complex of vector bundles
\[ E^{\bullet}: \ \ \ldots \stackrel{\partial}{\rmap} E^{0}\stackrel{\partial}{\rmap} E^1 \stackrel{\partial}{\rmap} E^1 \stackrel{\partial}{\rmap}\ldots .\]
There are two reasons for using complexes instead of vector bundles: they encode objects which are singular
(when the cohomology bundle associated to $E^{\bullet}$ is not smooth) and they 
allow more flexibility in the associativity requirements. Both features are important in order to define a good
adjoint representation (see Remark \ref{reasons-complexes} for more details). Indeed we conclude that the adjoint representation
of a Lie groupoid is a representation up to homotopy.

In a representation up to homotopy, an arrow $ g:x\rightarrow y$ acts as a map of complexes
\begin{equation*}
\lambda_g:E^{\bullet}_x \rightarrow E^{\bullet}_y,
\end{equation*}
but we allow this action not to respect the associativity. That is,
in general $\lambda_{g_1 g_2}$ and $\lambda_{g_1} \lambda_{g_2}$ are
not the same map of complexes. However, they are homotopic
maps, and there is a controlled and coherent way of choosing the
homotopies (``curvatures''). The definition of representation up to homotopy is the cochain
version of the classical notion of strongly homotopy multiplicative map,
as defined in Stasheff's book \cite{stasheff}.

Finally, we would like to mention that representations up to homotopy of Lie groupoids are
the global version of the infinitesimal case discussed in \cite{AC1} which, in turn, is based on
Quillen's superconnections \cite{Q}. However, things become more intricate at the global level-
and probably the main reason is the fact that, while the infinitesimal discussion is governed by
De-Rham algebras (associated to algebroids), which are commutative DGA's, the global one
is governed by a DGA which is commutative only up to homotopy (the algebra
of smooth groupoid cochains). This is reflected in the fact that the tensor
product of representations up to homotopy of groupoids is much more subtle and hides rather 
surprising combinatorics of topological origin. Although we point out here what happens with 
the tensor products, the more precise and complete treatment is deferred to \cite{ACD}.\\

{\bf The Bott spectral sequence:} The other main theme of this paper is a general Bott spectral sequence for Lie groupoids,
converging to the cohomology of classifying spaces. Let us first recall the case of Lie groups. 
The cohomology of the classifying space $BG$ of a Lie group $G$ can
be computed as the total cohomology of a double complex $\Omega(G_{\bullet})$, which
consists of differential forms on products of the group $G$. This space
has a vertical differential which is just the De-Rham operator, and a horizontal
differential which is given by the simplicial structure on the nerve of $G$.
In \cite{Bott}, Bott computed the horizontal cohomologies of this
double complex by proving the formula:
\begin{equation}\label{formula of Bott}
H_{\mathrm{d_h}}^p(\Omega^q(G_{\bullet}))\cong
H^{p-q}_{\mathrm{diff}}(G;S^q(\mathfrak{g}^*)),
\end{equation}
where the right hand side is the differentiable cohomology with
coefficients in the symmetric powers of the coadjoint
representation. Filtering the double complex by the differential
form degree, he obtained a spectral sequence converging to the cohomology
of $BG$:
\begin{equation}\label{spectral sequence Bott}
 E_1^{pq}=H^{p-q}_{\mathrm{diff}}(G;S^q(\mathfrak{g^*}))\Rightarrow
 H^{p+q}(BG).
 \end{equation}
In particular, due to the degree shift in the formula,
the first page of the spectral sequence vanishes
above the diagonal. On the other hand, it is well known that
for any finite dimensional representation $E$ of
a compact Lie group $G$:
\begin{equation*}
H^k_{\mathrm{diff}}(G;E)=0, \qquad \text{for $k>0$.}
\end{equation*}
In conclusion, in the
compact case, the $E_1$ terms of the spectral sequence vanish
outside the diagonal and Bott concludes:
\begin{equation*}
 H^{2p}(BG) \cong S^p(\mathfrak{g^*})^G.
\end{equation*}

In \cite{Get}, Getzler constructed a model for equivariant
cohomology in the non compact case, generalizing in particular Bott's spectral
sequence; this can be seen as a generalization of Bott's discussion from Lie
groups to groupoids associated to Lie group actions on manifolds. 
Behrend \cite{BEH1} generalized Getzler's model to the case of flat groupoids.
In this paper we generalize Bott's formula (\ref{formula of Bott}) and spectral
sequence (\ref{spectral sequence Bott}) to the case of an arbitrary Lie
groupoid $G$.\\

{\bf Main results:} The results of this paper can be divided into two types.
On one hand, we have the general constructions: defining  the notion
of representations up to homotopy (and the resulting derived category) 
as well as the construction of the adjoint representation.
On the other hand, we have several theorems, some of which we mention here.
First of all, there is a vanishing result for the differentiable cohomology associated to representations 
up to homotopy of proper groupoids (the analogue of compact groups). Such results are useful 
for instance in deformation problems (see also the remarks at the beginning of
subsection \ref{The vanishing theorem}). 
\begin{theorem1}
Let $E=\bigoplus_{l=a}^b E^l$ be a unital representation up to
homotopy of a proper Lie groupoid $G$. Then:
\begin{equation*}
\uHdiff^q(G;E) =0, \text{         if } q \notin [a,b].
\end{equation*}
\end{theorem1}

Next, we generalize Bott's formula (\ref{formula of Bott})
to the case of an arbitrary Lie
groupoid $G$.

\begin{theorem2}
Let $G$ be a Lie groupoid. Then, for the cohomology of the horizontal complex
\begin{equation*}
\xymatrix{
\Omega^q(G_0)\ar[r]^-{d_h}&\Omega^q(G_1)\ar[r]^-{d_h}&\cdots
\ar[r]^-{d_h}& \Omega^q(G_p)\ar[r]^-{d_h}&\cdots, }
\end{equation*}
one has
\begin{equation}\label{main formula}
H_{d_h}^p(\Omega^q(G_{\bullet}))\cong
H_{\mathrm{diff}}^{p-q}(G;\uS^q\uAd^*).
\end{equation}
\end{theorem2}

Note that, in order to give meaning to the right hand side of equation (\ref{main formula}),
we are using the adjoint representation as a representation up to homotopy and its symmetric powers.
One clarification is necessary here: the construction of the spectral sequence only depends on some of the properties of 
the symmetric powers of representations up to homotopy. So, while the general construction of the symmetric power operation
(which comes together with the construction of tensor products already mentioned) is deferred to
\cite{ACD}, we decided to work axiomatically  by stating the properties that we require from the these operations.
The axioms we use are just {\it some} of the  properties that are satisfied by the construction given in loc. cit; in particular, 
we do not need here the construction of $\uS^q$ when applied to morphisms between representations up to homotopy. 
We decided to work axiomatically 
in order to simplify the presentation and to make it independent of the
quite different problem of understanding the symmetric powers; also, we would like 
not to rule out other constructions of symmetric powers  which may appear in specific situations.

Combining Theorem \ref{main theorem} and Theorem \ref{vanishing} one obtains the following
result which, as pointed to us by  Ping Xu,
implies the existence of connections on $S^1$-gerbes over separated stacks.

\begin{theorem3}
Let $G$ be a proper Lie groupoid. Then
\begin{equation*}
H_{d_h}^p(\Omega^q(G_{\bullet}))=0, \text{ if } p>q.
\end{equation*}
\end{theorem3}

Finally, considering the filtration in the Bott-Shulman-Stasheff complex for the cohomology of $BG$ one also
concludes:
\begin{theorem4}
Let $G$ be a Lie groupoid. There is a spectral sequence converging
to the cohomology of $BG$:
\begin{equation*}
E_1^{pq}=H_{\mathrm{diff}}^{p-q}(G; \uS^q(\uAd^*))\Rightarrow
H^{p+q}(BG).
\end{equation*}
Moreover, if the groupoid is proper, the $E_1$  terms of the spectral sequence
vanish bellow the diagonal.
\end{theorem4}

\begin{acknowledgements}
We would like to thank Ieke Moerdijk and James Stasheff for their comments related to this work.
Also, we would like to thank B. Dherin for the many discussions we had, which greatly
influenced the final outcome.
\end{acknowledgements}

\section{Preliminaries}

\subsection{Groupoids}
\label{oids}

In this subsection we review some known facts about Lie groupoids and actions. As a general reference, we use \cite{McK}.

{\bf Lie groupoids and Lie algebroids:}  A groupoid is a category in which all arrows are isomorphisms. A Lie
groupoid is a groupoid in which the space of objects $G_0$ and the
space of arrows $G_1$ are smooth manifolds and all the structure
maps are smooth. More explicitly, a Lie groupoid is given by a
manifold of objects $G_0$ and a manifold of arrows $G_1$ together
with smooth maps $s,t:G_1 \rightarrow G_0$ the source and target
map, a composition map $m:G_1\times_{G_0} G_1\rightarrow G_1$, an
inversion map $\iota:G\rightarrow G$  and an identity map
$\epsilon:G_0 \rightarrow G_1$ that sends an object to the
corresponding identity. These structure maps should satisfy the usual
identities for a category. The source and target maps are required
to be surjective submersions and therefore the domain of the composition map 
is a manifold. We will usually denote the space of objects of a
Lie groupoid by $M$ and say that $G$ is a groupoid over $M$.

Recall also that a Lie algebroid over a manifold $M$ is a vector bundle $\pi:A \rightarrow
M$ together with a bundle map $\rho:A \rightarrow TM$, called the
anchor map and a Lie bracket on the space $\Gamma(A)$ of sections of
$A$ satisfying Leibniz identity:
\begin{equation*}
[\alpha,f\beta]=f[\alpha,\beta]+\rho(\alpha)(f)\beta,
\end{equation*}
for every $\alpha,\beta \in \Gamma(A)$ and $f \in C^{\infty}(M)$. It
follows that $\rho$ induces a Lie algebra map at the level of
sections. 
Lie algebroids are the infinitesimal counterparts of Lie groupoids.
Given a Lie groupoid $G$, its Lie algebroid $A= A(G)$ is defined as follows.
As a vector bundle, it is  the restriction of the kernel of
the differential of the source map to $M$. Hence, its fiber at 
$x\in M$ is the tangent space at the identity arrow $1_x$ of the
source fiber $s^{-1}(x)$.  The
anchor map is the differential of the target map. To describe the bracket,
we need to discuss invariant vector fields. A right invariant vector field on a Lie groupoid $G$ is a vector field
$\alpha$ which is tangent to the fibers of $s$ and such that, if
$g,h$ are two composable arrows and we denote by $R^{\textrm{h}}$ the right
multiplication by $h$, then
\begin{equation*}
\alpha (gh)=D_g(R^{\textrm{h}})(\alpha (g)).
\end{equation*}
It is not difficult to see that the space of right invariant vector fields is closed under the Lie bracket of vector fields, on one hand, 
and is isomorphic to $\Gamma(A)$, on the other hand. All together, we get the desired Lie bracket on $\Gamma(A)$. \\

\begin{example}\rm \ 
A Lie group $G$ can be seen as a Lie groupoid in which the
space of objects is a point. Associated to any manifold $M$ there is the pair groupoid $M\times M$ over $M$
for which there is exactly one arrow between each pair of points. Associated to any foliation is a Lie groupoid,
the holonomy groupoid of the foliation, which arises as the ``smallest desingularization of the leaf space''.
If a Lie group $G$ acts on a manifold $M$ there is an
associated action groupoid over $M$ denoted $G \ltimes M$ whose
space of arrows is $G\times M$ (see also below).

Infinitesimally, examples of Lie algebroids are Lie algebras, tangent
bundles, Poisson manifolds, foliations and Lie algebra actions.

Unlike
the case of Lie algebras, Lie's third theorem does not hold in this generality: not every Lie algebroid can be integrated
to a Lie groupoid. The precise conditions for the integrability are
described in \cite{CF1}. However, Lie's first and second theorem do hold. Due to the first one- which says that
if a Lie algebroid is integrable then it admits a
canonical source simply connected integration- one may often assume that the Lie groupoids under discussion
satisfy this simply-connectedness condition. Examples of groupoids which arise via Lie algebroids
are the symplectic groupoids of Poisson geometry and the monodromy groupoids 
of foliation theory.
\end{example}

{\bf Actions:} A left action of a groupoid $G$ on a space $P \stackrel{\nu}{\rightarrow} M$ over
$M$ is a map $G_1\times_{M} P \rightarrow P$ defined on the space $G_1\times_{M} P$ of
pairs $(g, p)$ with $s(g)= \nu(p)$, which satisfies $\nu(gp)= t(g)$ and the usual
conditions for actions. Associated to the action of $G$ on
$P\rightarrow M$ there is the action groupoid, denoted $G\ltimes P$. The base space is $P$, 
the space of arrows is $G_1\times_{M}P$ we have just mentioned, the source map is the second projection, while the target map
is the action. The multiplication in this groupoid is $(g, p)(h, q)= (gh, q)$.

This discussion has an infinitesimal version. Let $A$ be an algebroid over $M$. An infinitesimal action of $A$ on $P \stackrel{\nu}{\rightarrow} M$ is
given by a vector bundle map $\tilde{\rho}_P: \nu^*A\rmap TP$ with the property that $(d\nu)\circ \tilde{\rho}= \rho$ (the anchor of $A$)
and the induced map at the level of sections, $\tilde{\rho}_P: \Gamma(A)\rmap \mathcal{X}(P)$ is a Lie algebra map. Associated
to such an infinitesimal action there is the action algebroid $A\ltimes P$ over $P$. As a vector bundle, it is just $\nu^*A$, the anchor is
the infinitesimal action, while the bracket is determined by the Leibniz identity and $[\nu^*\alpha, \nu^*\beta]= \nu^*[\alpha, \beta]$. 
If a Lie groupoid $G$ acts on $P$, then there is an induced action of its Lie algebroid $A$ on $P$, and the Lie algebroid of $G\ltimes P$ is $A\ltimes P$.

\begin{example}\label{action-nerve}\rm \ 
For a Lie groupoid $G$, we denote by $G_k$ the space of strings of
$k$ composable arrows of $G$. When we
write a string of $k$ composable arrows $(g_1,\dots,g_k)$ we mean
that $t(g_i)=s(g_{i-1})$. Since the source and target maps are submersions, all the $G_k$ 
are manifolds. Each of the $G_k$'s carries a natural left action. First of all, we view $G_k$ over $M$ via the map
\[ t: G_k\rightarrow M, \ \ (g_1, \ldots , g_k)\mapsto t(g_1).\]
The left action of $G$ on $G_k\stackrel{t}{\rightarrow} M$ is just
\[ g (g_1, g_2, \ldots , g_k)= (gg_1, g_2, \ldots , g_k).\]
\end{example}

\subsection{Classifying spaces and their cohomology}

In this subsection we recall the simplicial construction of classifying spaces and the Bott-Shulman-Stasheff model for their cohomologies. As references, we use \cite{BSS, Sea}.

{\bf Classifying spaces:}  We now recall the construction of the classifying space of a Lie groupoid, as the geometric realization
of its nerve. First of all, the nerve of $G$ is the following simplicial manifold. The manifold of $k$-simplices is
the $G_k$ previously described, with the simplicial structure given by the face maps:
\begin{equation*}
d_i(g_1,\dots,g_k)=
\begin{cases}
(g_2,\dots,g_k) & \text{if } i=0\\
(g_1,\dots,g_ig_{i+1},\dots,g_k) & \text{if } 0<i<k\\
(g_1,\dots,g_{k-1}) & \text{if } i=k\\
\end{cases}
\end{equation*}
and the degeneracy maps:
\begin{equation*}
s_i(g_1,\dots,g_k)=(g_1\dots,g_{i},1,g_{i+1},\dots,g_k)
\end{equation*}
for $0\leq i\leq k$.\\

The general constructions on simplicial
manifolds  \cite{Sea}  applied to our case produce a new topological space, the (thick) geometric realization,
and this is the classifying space $BG$ of $G$. Explicitly, $BG$ is the quotient space
\begin{equation*}
BG=(\coprod_{k\geq 0} G_k \times \Delta^k) / \sim
\end{equation*}
obtained by identifying $(d_i(p),v)\in G_k \times \Delta^k$ with
$(p,\delta_i(v))\in  G_{k+1} \times \Delta^{k+1}$ for any $p\in G_{k+1}$ and any
$v\in \Delta^k$. Here $\Delta^k$ denotes the
standard topological $k-simplex$ and $\delta_i: \Delta^k\rmap \Delta^{k+1}$ is the
inclusion on the $i$-th face.\\

{\bf The Bott-Shulman-Stasheff complex:} Next, we move the the cohomology of classifying spaces and recall the Bott-Shulman-Stasheff complex.
In general, $BG$ is
infinite dimensional and it is not a manifold.
However, there is a De-Rham theory that allows one to compute the
cohomology of $BG$. The Bott-Shulman complex, denoted
$\Omega(G_{\bullet})$, is the double complex
$$
\xymatrix{
 \vdots& \vdots& \vdots & \\
 \Omega^2(G_0)\ar[u]^{\derham}\ar[r]^{\udh} &\Omega^2(G_1)\ar[u]^{\derham}\ar[r]^{\udh}
 &\Omega^2(G_2)\ar[u]^{\derham}\ar[r]^{\udh} & \dots\\
 \Omega^1(G_0)\ar[u]^{\derham}\ar[r]^{\udh} &\Omega^1(G_1)\ar[u]^{\derham}\ar[r]^{\udh}
 &\Omega^1(G_2)\ar[u]^{\derham}\ar[r]^{\udh} & \dots\\
 \Omega^0(G_0)\ar[u]^{\derham}\ar[r]^{\udh} &\Omega^0(G_1)\ar[u]^{\derham}\ar[r]^{\udh}
  &\Omega^0(G_2)\ar[u]^{\derham}\ar[r]^{\udh} & \dots\\
}$$ where the vertical differential is just the De-Rham differential
and the horizontal differential $\udh$ is given by the simplicial
structure,
\begin{equation*}
\udh(\omega)=\sum_{i=0}^{p+1}(-1)^{i+p+q}d_i^*(\omega),
\end{equation*}
where $\omega \in \Omega^q(G_p)$.
The total complex of $\Omega(G_{\bullet})$ is the De Rham model for
the cohomology of $BG$. We will also need the normalized Bott-Shulman-Stasheff
complex, denoted $\hat{\Omega}(G_{\bullet})$, which is the
subcomplex of $\Omega(G_{\bullet})$ that consists of forms $\eta \in
\Omega^q(G_p)$ such that $s_i^*(\eta)=0$ for all $i=0,\dots ,p-1$.
The inclusion $\hat{\Omega}(G_{\bullet}) \to \Omega(G_{\bullet})$
induces an isomorphism in cohomology.

\begin{theorem}{\bf{(Dupont, Bott, Shulman, Stasheff, \dots)}} There is a natural isomorphism
\begin{equation*}
H(Tot(\Omega(G_{\bullet}))) \cong H(BG).
\end{equation*}
\end{theorem}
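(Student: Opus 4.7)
The plan is to insert Dupont's complex of simplicial differential forms on the nerve as an intermediate model between the Bott--Shulman--Stasheff double complex and the singular cochains of $BG$. Define $\Omega^n_{\mathrm{simp}}(G_\bullet)$ to consist of compatible families $\omega = (\omega_p)_{p \geq 0}$ with $\omega_p \in \Omega^n(G_p \times \Delta^p)$, where compatibility means that $(\uid \times \delta_i)^* \omega_p = (d_i \times \uid)^* \omega_{p-1}$ for each face inclusion, together with the analogous condition for the degeneracies. Equip this complex with the total de Rham differential on each $G_p \times \Delta^p$.

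The first step is the simplicial de Rham theorem $H^*(\Omega^*_{\mathrm{simp}}(G_\bullet)) \cong H^*(BG;\mathbb{R})$. One proceeds via the skeletal filtration of $BG$: each cell is a piece of some $G_p \times \Delta^p$, and the classical de Rham theorem applied cell by cell, together with a Mayer--Vietoris comparison, matches simplicial forms with singular cochains with real coefficients. The compatibility conditions guarantee that the local Mayer--Vietoris sequences assemble globally, exactly as in Dupont's original argument for simplicial manifolds.

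The second step is to construct the fibre-integration map
\begin{equation*}
I : \Omega^n_{\mathrm{simp}}(G_\bullet) \rightarrow \uTot^n(\Omega(G_\bullet)), \qquad I(\omega)_p := \int_{\Delta^p} \omega_p \in \Omega^{n-p}(G_p),
\end{equation*}
and to verify by Stokes' theorem that it intertwines the simplicial de Rham differential with the total differential $\derham \pm \udh$. The appearance of $\udh$ on the right arises from rewriting the oriented boundary $\partial\Delta^p = \sum_i (-1)^i \delta_i(\Delta^{p-1})$ as an alternating sum of pullbacks along the face maps of the nerve, which matches the sign convention in the excerpt's definition of $\udh$ after bookkeeping the form-degree contribution.

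The main obstacle is proving that $I$ is a quasi-isomorphism. I would filter both sides by the simplicial degree $p$ and compare the induced spectral sequences of the filtered complexes. At the $E_0$ level, $I$ reduces to integration along the contractible simplex $\int_{\Delta^p}: \Omega^*(G_p \times \Delta^p) \to \Omega^{*-p}(G_p)$, which is a fibrewise Poincar\'e-lemma quasi-isomorphism. Hence $I$ induces an isomorphism on $E_1$ pages; since both filtrations are bounded below and exhaustive, the spectral sequences converge and $I$ itself is a quasi-isomorphism. Passing to the normalized subcomplex $\hat{\Omega}(G_\bullet)$, which the excerpt already notes is quasi-isomorphic to $\Omega(G_\bullet)$, streamlines the handling of degeneracies on the Bott--Shulman--Stasheff side and yields the stated natural isomorphism $H(\uTot(\Omega(G_\bullet))) \cong H(BG)$.
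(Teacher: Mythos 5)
The paper offers no proof of this theorem: it is quoted as a classical result with a pointer to the literature (Bott--Shulman--Stasheff, Segal; the argument is due to Dupont). Your proposal reconstructs precisely the standard proof from those sources -- interpolate Dupont's complex of compatible simplicial forms on the nerve, prove the simplicial de Rham theorem via the skeletal filtration of the thick realization, and compare with the double complex by integration over the simplices -- so in that sense you are taking the same route as the references the paper relies on, and the outline is correct.

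One step deserves more care than you give it. In the final spectral-sequence comparison you assert that at the $E_0$ level the map $I$ reduces to $\int_{\Delta^p}\colon \Omega^*(G_p\times\Delta^p)\to\Omega^{*-p}(G_p)$ and that this is a ``fibrewise Poincar\'e-lemma quasi-isomorphism.'' As stated this cannot be right: since $\Delta^p$ is contractible, $\Omega^*(G_p\times\Delta^p)$ has the cohomology of $G_p$ with no degree shift, whereas the target sits in degree $*-p$; moreover the $p$-th graded piece of the filtration on the simplicial-forms side is not all of $\Omega^*(G_p\times\Delta^p)$, because the face-compatibility conditions couple the different levels. The standard fix is to pass to normalized forms (those whose restriction to each boundary face $G_p\times\delta_i(\Delta^{p-1})$ agrees with the pullback of a lower-level form, which after reduction means vanishing on $\partial\Delta^p$) and to invoke the relative Poincar\'e lemma for the pair $(\Delta^p,\partial\Delta^p)$; it is exactly $H^*(\Delta^p,\partial\Delta^p)$ being concentrated in degree $p$ that produces the shift by $p$ and makes fibre integration an isomorphism on the $E_1$ page. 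With that correction the argument closes and agrees with Dupont's.
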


\begin{example}\rm \ 
When $G$ is a Lie group, one recovers (up to homotopy) the usual classifying
space of $G$. More generally, for the groupoid $G \ltimes M$ associated to an action 
of $G$ on $M$, $B(G \ltimes M)$ is a
model for the homotopy quotient
\begin{equation*}
B(G \ltimes M) \cong M_G= (EG\times M)/G,
\end{equation*}
hence the cohomology of $B(G\ltimes M)$ is isomorphic to the equivariant (Borel) cohomology of $M$.
\end{example}

{\bf Classical models and the Bott spectral sequence:}
The complex $\Omega(G_{\bullet})$ computes the cohomology of the classifying space of $G$ and is the place where 
various geometric structures live (e.g. the multiplicative two-forms of Poisson and even Dirac geometry). However
it is not optimal as a model for the cohomology of $BG$. Ideally, one would like to have a smaller infinitesimal model.

Let us first look at the case when $G$ is a Lie group. If $G$ is compact, a well-known theorem of Borel asserts
that
\[ H^*(BG)\cong S(\mathfrak{g}^*)^G.\]
For general Lie groups, Bott provides a spectral sequence which, when $G$ is compact, degenerates giving
the previous isomorphism. The Bott spectral sequence has the form:
\[ E^{p, q}_{1}= H^{p- q}(G; S^q\mathfrak{g}^*)) \Longrightarrow H^{p+q}(BG),\]
Here $H(G; V)$ denotes the {\it differentiable cohomology}  of $G$ (see below) with coefficients
in the representation $V$ of $G$. Also, $\mathfrak{g}^*$ is {\it the coadjoint representation} of $G$
and $S^*\mathfrak{g}^*$ is {\it the symmetric power of the coadjoint representation}. We remark at this point
that  while the notion of representation and that of differentiable 
cohomology extends without any problem to the context of Lie groupoids, the situation is quite different when it comes to the adjoint representation and taking its symmetric powers. These problems will be solved in this paper.

Regarding a better understanding of the cohomology of classifying spaces, another 
case that is worth mentioning here is that of the groupoid arising from an action of a Lie group $G$ on a manifold $M$.
As mentioned above, the resulting cohomology is the equivariant cohomology $H_{G}(M)$. Again, when $M$
is compact, the situation is particularly nice: one can find a rather small infinitesimal model consisting on ``equivariant
differential forms'' (which depends on the Lie algebra of $G$ rather than on $G$ itself) known as the Cartan model for equivariant 
cohomology. For noncompact groups a different model was found by Getzler \cite{Get}, which is a combination of Cartan's model and the model
computing differentiable cohomology of $G$ - we will refer to it as the Cartan-Getzler model.
In particular, this construction induces a spectral sequence very similar to the Bott spectral sequence which in the compact case
shows that the Cartan model does compute equivariant cohomology. 
We will see that such spectral sequences are a particular case
of a Bott spectral sequence for Lie groupoids.\\

\subsection{Representations and differentiable cohomology}

In this subsection we review the notion of representation of Lie groupoids and associated (differentiable) cohomology,
from the point of view of differential graded algebra. It will be this point of view that will be taken for introducing the
notion of representations up to homotopy. 

{\bf Differentiable cohomology:} For a Lie groupoid $G$ over $M$, the differentiable cohomology of $G$,
denoted $H^{*}_{\textrm{diff}}(G)$, or just $H^{*}(G)$, is the cohomology of the first line in the Bott-Shulman complex.
We denote by $C^{\bullet}(G)$ this first line. Hence $C^k(G)$ consists of smooth functions defined on $G_k$
while its differential, denoted
\[ d: C^k(G)\rmap C^{k+1}(G),\]
is the alternating sum of the $d_{i}^{*}$'s. The space $C^{\bullet}(G)$ has a natural algebra structure given by
\[ (f\star g)(g_1, \ldots , g_{k+p})= (-1)^{kp}f(g_1, \ldots , g_k) h(g_{k+1}, \ldots , g_{k+p}),\]
for $f\in C^k(G)$, $h\in C^p(G)$. In this way, $C^{\bullet}(G)$ becomes a DGA (differential graded algebra),
i.e. the differential $d$ and the product $\star$ satisfy the derivation identity
\[ d(f\star h)= d(f)\star h+ (-1)^k f\star d(h).\]
In particular, $H^{\bullet}(G)$ inherits a graded algebra structure.

\begin{example}\rm \ When $G$ is a Lie group, one recovers the group cohomology of $G$ with smooth cochains.
When $G$ is compact, using the Haar measure one can easily see that this cohomology vanishes in positive degree. 
This vanishing result actually holds for all proper groupoids (i.e. Lie groupoids $G$ over $M$ with the property that the map
$(s, t): G\rmap M\times M$ is proper). In particular, it also holds for the pair groupoid
$M\times M$ over $M$. The vanishing for the pair groupoid can be seen as a consequence of another important
property of differentiable cohomology: its Morita invariance (see \cite{Cra2}).

For a Lie group $G$, $H^{\bullet}(G)$ is related to the Lie algebra cohomology $H^{\bullet}(\mathfrak{g})$
by the so called Van Est map. Here $\mathfrak{g}$ is the Lie algebra of $G$. The Van Est map is known to be 
an isomorphism up to degree $k$ provided $G$ is $k$-connected. This relationship can actually be extended 
to all Lie groupoids. If the Lie group $G$ is connected, $H^{\bullet}(G)$ is isomorphic to $H^{\bullet}(\mathfrak{g}, K)$- 
the cohomology of $\mathfrak{g}$ relative to the maximal compact subgroup $K$ of $G$. In particular,
$H^{\bullet}(G)$ is finite dimensional for all connected Lie groups.

Finally, for the groupoid $G \ltimes M$ associated to an action 
of a Lie group $G$ on $M$, $H^{\bullet}(G \ltimes M)$ is the differentiable cohomology of
$G$ with coefficients in $C^{\infty}(M)$. 
\end{example}

{\bf Actions on vector bundles:}  Given a Lie groupoid $G$ over $M$, a quasi-action of $G$ on a vector bundle
$E$ over $M$ is an operation which associates to any arrow $g: x\rmap y$ in $G$ a linear map
\[ \lambda_g: E_x\rmap E_y\]
which varies smoothly with respect to $g$ (see also below). The quasi-action is called unital if $\lambda_g$ is the identity map
whenever $g$ is a unit. Finally, the quasi-action is called an action if the action condition
\[ \lambda_g\circ \lambda_h= \lambda_{gh}\]
holds for all $g, h\in G$ composable.

Alternatively, for a vector bundle $E$ over $M$, one can talk about the general linear groupoid of $E$, denoted $GL(E)$.
It is a groupoid over $M$ and its arrows from $x\in M$ to $y\in M$ are the linear isomorphisms
from $E_x$ to $E_y$. This space has a canonical smooth structure and, together with the composition of linear maps,
it becomes a Lie groupoid. With this, a quasi-action of $G$ on $M$ is a smooth map $\lambda: G\rmap GL(E)$ which commutes
with the source and the target maps. It is unital if it also commutes with the unit map. And it is an action if it is a
morphism of Lie groupoids.

There is  yet another, more algebraic way to view such actions. Given a vector bundle $E$ over $M$ we form the graded vector
space $C^{\bullet}(G; E)$ whose degree $k$ part is
\[ C^k(G; E):= \Gamma(G_k, t^*E).\]
This is a right graded module over the algebra $C^{\bullet}(G)$: given $\eta\in C^k(G; E)$, $f\in C^{k'}(G)$, their product $\eta\star f\in C^{k+k'}(G; E)$
is defined by 
\[ (\eta\star f)(g_1, \ldots , g_{k+k'})= (-1)^{k k'}\eta(g_1, \ldots , g_k) f(g_{k+1}, \ldots , g_{k+k'}).\]
One also considers the normalized subspace $\hat{C}^{\bullet}(G; E)$ of $C^{\bullet}(G;E)$ consisting of
those $\eta$ with the property that $s_{i}^{*}(\eta)= 0$ for all $i$.
In general, a quasi-action $\lambda$ of $G$ on $E$ induces a degree one operator
\[ D_{\lambda}: C^{\bullet}(G; E)\rmap C^{\bullet+ 1}(G; E)\]
by the formula 
\begin{eqnarray}
 D_{\lambda}(\eta)(g_1, \ldots , g_{k+1}) & = & (-1)^k\{\lambda_{g_1}\eta(g_2, \ldots , g_{k+1})+ \nonumber \\
                                                               &  + & \sum_{i= 1}^{k}(-1)^i \eta(g_1, \ldots, g_ig_{i+1}, \ldots , g_{k+1})+ (-1)^{k+1} \eta(g_1, \ldots, g_k)\}. \nonumber
\end{eqnarray}
Note that, in the particular case when $E$ is the trivial line bundle and $\lambda$ is the trivial action, this is precisely the
differential $d$ on $C^{\bullet}(G)$. With this, we have the following:

\begin{lemma}\label{lemma-quasi-action} The construction $\lambda\mapsto D_{\lambda}$ induces a 1-1 correspondence between
quasi-actions $\lambda$ of $G$ on $E$ and degree $1$ operators $D_{\lambda}$ on $C^{\bullet}(G; E)$ satisfying the
Leibniz identity
\[ D_{\lambda}(\eta\star f)= D_{\lambda}(\eta)\star f+ (-1)^k \eta\star d(f),\]
for $\eta\in C^{k}(G; E)$, $f\in C^{k'}(G)$.
Moreover, 
\begin{enumerate}
\item $\lambda$ is unital if and only if $D_{\lambda}$ preserves
the normalized subspace $\hat{C}(G; E)$. 
\item $\lambda$ is an action if and only if it is unital and $D_{\lambda}^2= 0$ (i.e. $C^{\bullet}(G; E)$ becomes
a right differential graded module over $C^{\bullet}(E)$).
\end{enumerate}
\end{lemma}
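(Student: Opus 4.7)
The plan is to reduce everything to the behaviour of $D_\lambda$ on the degree zero piece $C^0(G;E)=\Gamma(E)$, where $\lambda$ is directly visible, and then to propagate to higher degrees via the Leibniz identity together with the local decomposition $\eta=\sum_j e_j\star f_j$ which is available for any $\eta\in C^k(G;E)$ (using a local frame of $E$ to produce the $e_j\in\Gamma(E)$ and smooth functions on $G_k$ to produce the $f_j\in C^k(G)$). For the Leibniz identity itself, I would expand $D_\lambda(\eta\star f)(g_1,\ldots,g_{k+k'+1})$ using the defining formula and split the inner alternating sum over $i$ according to whether the composed pair $g_ig_{i+1}$ lies in the $\eta$-block ($i<k$), straddles the two blocks ($i=k$), or lies in the $f$-block ($i>k$): the first group together with the $\lambda_{g_1}$ term and the first boundary term assembles into $D_\lambda(\eta)\star f$, the third group into $(-1)^k\eta\star d(f)$, and the straddling term $\eta(\ldots,g_kg_{k+1})f(\ldots)$ appears in both partial sums with opposite signs and cancels.

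For the bijection, evaluating the defining formula at $\eta=e\in C^0(G;E)$ collapses to
\[ D_\lambda(e)(g)=\lambda_g e(s(g))-e(t(g)),\]
so $\lambda$ is recovered from $D_\lambda$ via $\lambda_g(v)=D_\lambda(e)(g)+e(t(g))$ for any section $e$ with $e(s(g))=v$. Conversely, given any degree one $D$ satisfying Leibniz, this formula produces a smooth quasi-action $\lambda$; the Leibniz identity together with the local expansion $\eta=\sum_j e_j\star f_j$ then determines $D$ everywhere from its values on $C^0(G;E)$ and from $d$ on $C^\bullet(G)$, so $D=D_\lambda$.

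The two additional equivalences are direct combinatorial checks on the formula for $D_\lambda$. For unitality, computing $s_j^*(D_\lambda\eta)$ for normalized $\eta$ amounts to inserting a unit among the arguments of $D_\lambda\eta$; every term in the resulting expansion in which the inserted unit remains as an argument of $\eta$ vanishes by normalization, while the surviving terms either cancel in pairs via $1\cdot g=g\cdot 1=g$ with opposite signs, or, at the extremal positions, are controlled by $\lambda_1$ and by the final boundary term of the formula, the required cancellation being exactly $\lambda_1=\mathrm{id}$; conversely, the failure of unitality is already detected by evaluating $D_\lambda(e)$ for $e\in\Gamma(E)$ at a unit arrow. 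For the action condition, a two-step expansion on $e\in C^0(G;E)$ yields
\[ D_\lambda^2(e)(g_1,g_2)=-\bigl(\lambda_{g_1}\lambda_{g_2}-\lambda_{g_1g_2}\bigr)e(s(g_2)),\]
so $D_\lambda^2=0$ on degree zero is equivalent to the action condition, and two applications of Leibniz together with $d^2=0$ on $C^\bullet(G)$ give $D_\lambda^2(\eta\star f)=D_\lambda^2(\eta)\star f$, so the local decomposition propagates the vanishing from $C^0$ to all of $C^\bullet(G;E)$.

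The main obstacle I expect is purely bookkeeping: keeping track of the alternating signs in the Leibniz check and of the pairwise cancellations in the unitality and $D_\lambda^2$ computations. No genuinely new idea beyond the standard simplicial combinatorics of the face and degeneracy maps and the right $C^\bullet(G)$-module structure of $C^\bullet(G;E)$ is needed.
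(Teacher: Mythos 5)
Your proposal is correct and follows essentially the same route as the paper: recover $\lambda$ on degree-zero cochains via $\lambda_g(v)=D(e)(g)+e(t(g))$ for $e(s(g))=v$, and propagate everything else (the Leibniz check, unitality, and the equivalence of $D_\lambda^2=0$ with associativity) through the Leibniz rule using that $C^{\bullet}(G;E)$ is generated over $C^{\bullet}(G)$ by $\Gamma(E)$. The one point the paper makes explicit that you elide is that, for an abstract $D$ satisfying the Leibniz identity, the recovery formula is independent of the choice of section $e$ with $e(s(g))=v$; this follows by writing the difference of two such sections as $f\gamma$ with $f$ vanishing at $s(g)$ and applying the Leibniz identity --- exactly the tool you already have in hand, so this is a cosmetic rather than a substantive omission.
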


\begin{proof} The quasi-action can be recovered from $D_{\lambda}$ as follows: for $g: x\rmap y$ in $G$,
$v\in E_x$, choose $\alpha\in \Gamma(E)$ such that $\alpha(x)= v$ and put
\[ \lambda_g(v)= \alpha(y)+ d_{\lambda}(\alpha)(g).\]
This does not depend on the choice of $\alpha$. Indeed, if $\beta$ is another section with this property, we
may assume that $\beta- \alpha= f \gamma$ where $f\in C^{\infty}(M)$ vanishes at $x$ and $\gamma\in \Gamma(E)$,
and then one uses the Leibniz identity. The statement about unitality is immediate. The last part follows by a straightforward computation.
\end{proof}

\begin{definition} Let $G$ be a Lie groupoid over $M$. A representation of $G$ is a vector bundle $E$ over $M$ together with an action
$\lambda$ of $G$ on $E$. Given such a representation, the differentiable cohomology of $G$ with coefficients in $E$,
denoted $H^{\bullet}(G; E)$ is the cohomology of the complex $(C^{\bullet}(G; E), d_{\lambda})$. 
\end{definition}

\subsection{Connections and basic curvatures on groupoids}

In this subsection we introduce the notion of Ehresmann connections on Lie groupoids and their
associated (basic) curvatures. The choice of such a connection will be needed in order to construct
the adjoint representation of a Lie groupoid (a representation
up to homotopy); up to isomorphism, the adjoint representation is be independent of this choice.
The discussion is completely parallel to the one from the infinitesimal case
(Subsection 2.2 of \cite{AC1}). However, the presentation here is self-contained. One reason comes
from the fact that Ehresmann connections on Lie groupoids are of independent interest, we believe.
With further flatness conditions they have already appeared in \cite{BEH1, Tang}.

Throughout this subsection, $G$ is a Lie groupoid over $M$. Recall that
the unit map realizes $M$ as an embedded submanifold of $M$. Accordingly, 
for $x\in M$, the unit $1_x\in G$ at $x$ will be denoted by $x$. Similarly,
$T_xM$ will be viewed as a subspace of $T_xG$. Note that, for all $x\in M$,
\[ T_xM\oplus \textrm{Ker}(ds)_x= T_xG.\]
However, at arbitrary points $g\in G$, $\textrm{Ker}(ds)_x$ has no canonical 
complement. 

\begin{definition} An Ehresmann connection on $G$ is a sub-bundle $\mathcal{H}\subset TG$ which is complementary
to $(ds)$ and has the property that
\[ \mathcal{H}_{x}= T_xM, \ \ \forall\ x\in M.\]
\end{definition}

Here is an equivalent way of looking at connections which uses the vector bundle 
underlying the Lie algebroid of $G$:
\[ A= \textrm{Ker}(ds)|_{M}.\]
The construction of the Lie algebroid actually shows that there is  a short exact sequence
of vector bundles over $M$:
\[ t^*A\stackrel{r}{\rmap} TG\stackrel{ds}{\rmap} s^*TM,\]
where $r$ is given by right translations: for $g: x\rmap y$ in $G$, $r_{g}= (dR_g)_{1_y}: A_y\rmap T_g G$. 
With this, we see that the following structures are equivalent:
\begin{itemize}
\item An Ehresmann connection $\mathcal{H}$ on $G$.
\item  A right splitting of the previous sequence, i.e. a section $\sigma: s^*TM\rmap TG$ 
of $(ds)$, which restricts to the natural splitting at the identities.
\item A left splitting $\omega$ of the previous sequence which restricts to the natural one at the identities. Such a splitting can be viewed as a 1-form $\omega\in \Omega^1(G, t^*A)$ 
satisfying $\omega(r(\alpha))= \alpha$ for all $\alpha$.
\end{itemize}
All these are related by:
\[ \omega\circ r= \textrm{Id}, \ (ds)\circ \sigma= \textrm{Id}, r\circ \omega+ \sigma\circ (ds)= \textrm{Id}, \ \mathcal{H}= \textrm{Ker}(\sigma)= \textrm{Im}(\omega).\]

From now on, when talking about a connection on $G$, we will make no distinction between the sub-bundle $H$, 
the splitting $\sigma$ and the form $\omega$. In particular, we will also say that $\sigma$ is a connection on $G$.
Note that we will often say connection instead of Ehresmann connection.

\begin{lemma} 
Every Lie groupoid admits an Ehresmann connection.
\end{lemma}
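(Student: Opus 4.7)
The plan is to construct a splitting $\sigma : s^*TM \to TG$ of $(ds)$ in two stages: first produce an arbitrary splitting, then correct it so that it restricts to the canonical inclusion $TM \hookrightarrow TG$ along the unit submanifold $M \subset G$.

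For the first stage I would invoke the general fact that every short exact sequence of smooth vector bundles over a paracompact base splits smoothly. Applying this to
\[ 0 \rmap t^*A \stackrel{r}{\rmap} TG \stackrel{ds}{\rmap} s^*TM \rmap 0 \]
over $G$ yields a section $\sigma_0 : s^*TM \to TG$ of $(ds)$. Concretely one can pick a Riemannian metric on $TG$ and let $\sigma_0$ be the orthogonal lift, or use a partition of unity subordinate to local trivializations. There is no reason for $\sigma_0$ to agree with the tautological splitting $\iota : TM \hookrightarrow TG$ at units, so a correction is needed.

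For the correction step, note that along $M$ the map $\sigma_0|_M - \iota$ takes values in $\mathrm{Ker}(ds)|_M = A$, hence defines a bundle map $\phi : TM \to A$ over $M$. Using that $M$ is a closed embedded submanifold of $G$ (and the source/target restrict to the identity on $M$, so $s^*TM|_M = TM$ and $t^*A|_M = A$), the section $\phi$ of the vector bundle $\mathrm{Hom}(s^*TM, t^*A) \to G$ along $M$ can be extended to a smooth section $\tilde\phi$ over all of $G$. The standard recipe is to choose a tubular neighborhood of $M$ in $G$, pull $\phi$ back along the retraction, and then cut off by a bump function supported in the neighborhood.

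Finally I would set $\sigma := \sigma_0 - r \circ \tilde\phi$. Since $r$ lands in $\mathrm{Ker}(ds)$, one has $(ds)\circ\sigma = (ds)\circ\sigma_0 = \mathrm{Id}$, so $\sigma$ is still a splitting. At a point $x \in M$ one has $r_{1_x} : A_x \to T_xG$ equal to the canonical inclusion (since $R_{1_x}$ fixes $s^{-1}(x)$ pointwise), and $\tilde\phi(x) = \sigma_0(x) - \iota$, so $\sigma(x) = \sigma_0(x) - (\sigma_0(x) - \iota) = \iota$, as required. The corresponding horizontal distribution $\mathcal{H} := \mathrm{Im}(\sigma)$ is then an Ehresmann connection. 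The only mildly delicate step is the extension of $\phi$ from $M$ to $G$, which is the standard submanifold-extension argument; everything else is a direct verification.
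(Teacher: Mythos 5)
Your proof is correct and is essentially an expansion of the paper's one-line proof (``a standard partition of unity argument''): you split the sequence $t^*A \to TG \to s^*TM$ by a partition-of-unity/metric argument and then correct the splitting at units by extending the defect $\phi = \sigma_0|_M - \iota \in \Gamma(\mathrm{Hom}(TM,A))$ off the closed unit submanifold. All the verifications (that $r_{1_x}$ is the canonical inclusion $A_x \hookrightarrow T_{1_x}G$, hence $\sigma|_M = \iota$, and that $(ds)\circ\sigma = \mathrm{Id}$ is preserved) check out.
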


\begin{proof}
 A standard partition of unity argument.
\end{proof}

\begin{remark}\rm \ Of course, there is a symmetric version of this definition which uses the target map instead of $s$.
The two are equivalent: any $\mathcal{H}$ as above induces a sub-bundle
\[ \overline{\mathcal{H}}:= (d\iota)(\mathcal{H})\]
which is complementary to $\textrm{Ker}(dt)$ and, at points $x\in M$, it coincides with the image of $T_{x}M$.
Similarly,  there is  a short exact sequence
of vector bundles over $M$:
\[ s^*A\stackrel{l}{\rmap} TG\stackrel{dt}{\rmap} t^*TM\]
where $l$ is given by left translations, and a connection $\mathcal{H}$ is the same as a splitting
$(\overline{\omega}, \overline{\sigma})$ of this sequence. In terms of $(\omega, \sigma)$,
\[ \overline{\sigma}_{g}(v)= (d\iota)_{g^{-1}}(\sigma_{g^{-1}}(v)),\ \overline{\omega}_g(X)= \omega_{g^{-1}}(d\iota)_{g}(X).\]
\end{remark}

What a connection gives us is, first of all, quasi-actions of $G$ on $A$ and on $TM$. 

\begin{definition} Given a connection $\sigma$ on $G$, we define the quasi-actions 
of $G$ on $TM$ and $A$, both denoted $\lambda$, given by
\begin{eqnarray*}
 \lambda_g(X)&=& (dt)_g(\sigma_g(X)),\\
 \lambda_g(\alpha)&=& - \omega_g(l_g(\alpha)),
\end{eqnarray*}
where $X\in T_{s(g)}M$ and $\alpha\in A_{s(g)}$. 
\end{definition}

The next piece of data that a connection gives us is a curvature term. 
Thinking of $\mathcal{H}$ as an Ehresmann connection on the bundle $s: G\rmap M$, one can talk about its 
Ehresmann curvature. There is however another type of curvature, called the basic curvature, which is
more important for our purposes- it measures the failure of $\mathcal{H}$ being closed under the
multiplication induced from $G$. For the actual definition, we prefer to use $\sigma$. Given 
$(g, h)\in G_2$ that we illustrate as:
\[ y\stackrel{g}{\lmap} \stackrel{h}{\lmap} x\]
 and given $v\in T_xM$, the expression
\[ \sigma_{gh}(v)- (dm)_{g, h}(\sigma_g(\lambda_h(v)), \sigma_h(v)) \in T_{gh}G\]
is killed by $(ds)_{gh}$, so it belongs to the image of $r_{gh}$. We denote by
\[ K^{\textrm{bas}}_{\sigma}(g, h)v\in A_y\]
the resulting element. When we vary $g, h$ and $v$ we obtain a section of a vector bundle over $G_2$.

\begin{definition} Given a connection $\sigma$ on $G$, we define the basic curvature of $\sigma$ as the
resulting section
\[ K^{\textrm{bas}}_{\sigma}\in \Gamma(G_2; \textrm{Hom}(s^*TM, t^*A)).\]
A Cartan connection on $G$ is an Ehresmann connection with the property that its basic curvature vanishes.
\end{definition}

Note that the basic curvature could be defined more directly (but less intuitively) by the formula:
\[ K^{\textrm{bas}}_{\sigma}(g, h)v= - \omega_{gh}((dm)_{g, h}(\sigma_g(\lambda_h(v)), \sigma_h(v))).\]
In words, the basic curvature can be described as the failure of the ``multiplicativity of $\omega$''. 
Note that, in terms of the bundle $\mathcal{H}$, the main property of the curvature 
can be rephrased as follows. To state it, recall that $TG$ is naturally a groupoid over $TM$: its structure maps are just the differentials
of the structure maps of $G$. It follows that

\begin{lemma} Given a connection on $G$, the associated bundle $\mathcal{H}$ is a subgroupoid of $TG$ if and only
if the basic curvature vanishes.
\end{lemma}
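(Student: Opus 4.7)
The plan is to unwind what it means for $\mathcal{H}$ to be a subgroupoid of $TG$ over $TM$ and match each of the required closure conditions with the definition of $K^{\mathrm{bas}}_\sigma$. Recall that $TG$ is a Lie groupoid over $TM$ with source $ds$, target $dt$ and multiplication $dm$, so $\mathcal{H}\subset TG$ is a subgroupoid over $TM$ precisely when: (a) $ds$ and $dt$ carry $\mathcal{H}$ into $TM$; (b) $\mathcal{H}$ contains every $TG$-unit sitting over $TM$; (c) $\mathcal{H}$ is closed under composition in $TG$; (d) $\mathcal{H}$ is closed under inversion. Items (a) and (b) are immediate from the definitions: an element of $\mathcal{H}_g$ has the form $\sigma_g(v)$ with $ds(\sigma_g(v))=v\in TM$ and $dt(\sigma_g(v))=\lambda_g(v)\in TM$, and $\mathcal{H}_x=T_xM$ by the very definition of a connection coincides with the $TG$-units over $T_xM$.

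The content lies in condition (c). Two elements $\sigma_g(v_1)\in\mathcal{H}_g$ and $\sigma_h(v_2)\in\mathcal{H}_h$ are $TG$-composable iff $v_1=\lambda_h(v_2)$, and their $TG$-product is $(dm)_{g,h}(\sigma_g(\lambda_h(v_2)),\sigma_h(v_2))$. By the defining equation of the basic curvature, this product equals
\[
\sigma_{gh}(v_2)\;-\;r_{gh}\bigl(K^{\mathrm{bas}}_\sigma(g,h)v_2\bigr).
\]
Now $\mathcal{H}_{gh}$ meets the image of $r_{gh}=\ker(ds)_{gh}$ only at zero, so the product lies in $\mathcal{H}_{gh}$ if and only if $K^{\mathrm{bas}}_\sigma(g,h)v_2=0$. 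Letting $(g,h)$ and $v_2$ vary, closure under composition is equivalent to the vanishing of $K^{\mathrm{bas}}_\sigma$. This already gives the direction ``$\mathcal{H}$ is a subgroupoid $\Rightarrow K^{\mathrm{bas}}_\sigma=0$'' and half of the converse.

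For the other half it remains to check that when $K^{\mathrm{bas}}_\sigma=0$, closure under inversion (condition (d)) is automatic. Applying $dt$ to the (now exact) composition identity yields $\lambda_{gh}=\lambda_g\circ\lambda_h$, so $\lambda$ becomes a genuine action of $G$ on $TM$; in particular each $\lambda_g$ is invertible with inverse $\lambda_{g^{-1}}$. Specializing the composition identity to the composable pair $(g,g^{-1})$ and any $v\in T_{t(g)}M$ gives
\[
(dm)_{g,g^{-1}}\bigl(\sigma_g(\lambda_{g^{-1}}(v)),\,\sigma_{g^{-1}}(v)\bigr)=\sigma_{1_{t(g)}}(v)=v,
\]
which is the $TG$-unit at $v$. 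Hence $\sigma_{g^{-1}}(v)$ is the $TG$-inverse of $\sigma_g(\lambda_{g^{-1}}(v))$; since $\lambda_{g^{-1}}$ is a bijection, as $v$ ranges over $T_{t(g)}M$ the element $\lambda_{g^{-1}}(v)$ ranges over all of $T_{s(g)}M$, exhibiting the $TG$-inverse of every element of $\mathcal{H}_g$ inside $\mathcal{H}_{g^{-1}}$.

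The only genuine step is the identification of the $TG$-product with $\sigma_{gh}(v_2)-r_{gh}(K^{\mathrm{bas}}_\sigma(g,h)v_2)$, which is exactly the definition of $K^{\mathrm{bas}}_\sigma$; the rest is bookkeeping. The main thing to watch is that closure under inverses has to be addressed separately (unlike the purely algebraic setting of groups, subcategories of a groupoid need not be subgroupoids), but the argument above shows that once composition closure holds, inversion closure follows from the action property of $\lambda$.
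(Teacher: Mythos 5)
Your proof is correct and is exactly the argument the paper has in mind: the lemma is stated there as an immediate consequence of the definition of $K^{\textrm{bas}}_{\sigma}$ as the failure of $\mathcal{H}$ to be closed under $dm$, and your identification of the $TG$-product of $\sigma_g(\lambda_h(v))$ and $\sigma_h(v)$ with $\sigma_{gh}(v)-r_{gh}(K^{\textrm{bas}}_{\sigma}(g,h)v)$, together with the fact that $\mathcal{H}_{gh}\cap\mathrm{Im}(r_{gh})=0$, is the whole content. Your separate verification of closure under inversion is a detail the paper silently omits, and you handle it correctly via $\lambda_{gh}=\lambda_g\circ\lambda_h$ and the unit identity at $(g,g^{-1})$.
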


\begin{remark}
The notion of a connection on a Lie groupoid already appeared in Behrend's paper \cite{BEH1} and in Tang's \cite{Tang}. However, our terminology is different from the one in those papers. For Behrend,  a connection is what we call a  Cartan
connection, and a flat connection is  a Cartan connection which is integrable as a distribution. In \cite{BEH1} the author generalizes Getzler's model for equivariant cohomology to the case of groupoids endowed with flat Cartan connections, and mentions that it may be 
possible to describe the spectral sequence using a less restrictive type of connection (see Remark 3.14 therein). Indeed, in the present
paper we explain the construction of such a spectral sequence for an arbitrary connection in terms of representations up to homotopy.
While connections (in the sense of the present paper) exist for arbitrary Lie groupoids,
the existence of a flat Cartan connection is very restrictive. 
Indeed, one can show that if $G$ is a Lie groupoid over a compact simply connected manifold which has simply connected source fibers, then the existence of a flat Cartan connection  implies that $G$ is the
action groupoid associated to the action of a Lie group on a manifold. 
In order to prove this, one observes that a connection $\sigma$ on $G$ induces a connection $\nabla$ on the vector bundle $A$ over $M$. Moreover, the fact the basic
curvature of $\sigma$ is zero implies that the basic curvature of $\nabla$ (as defined in \cite{AC1})  vanishes. 
Also, the flatness of $\sigma$ implies that $\nabla$ is flat.
As shown in  Proposition 2.12 of \cite{AC1}, these conditions guarantee that $A$ is the Lie algebroid associated to a Lie algebra action. By the uniqueness of the integration, one concludes that $G$ is the groupoid associated to the corresponding group action.
\end{remark}

The main properties of the quasi-actions and of the basic curvature are collected in the following lemma.
Note that it will be the notion of representation up to homotopy that will allow us to understand all these
properties in a unified, more conceptual manner: they say that the adjoint representation is well-defined
as a representation up to homotopy.

\begin{proposition}\label{adjoint-basic-formulas} \label{lemma equations for the adjoint}
Given a connection $\sigma$ on $G$, the following
relations are satisfied:
\begin{enumerate}
\item The anchor $\rho: A\rmap TM$ is equivariant, i.e.
\begin{equation}\label{equation map of complexes}
\rho(\lambda_g(\alpha))= \lambda_g(\rho(\alpha)),
\end{equation}
for all $g\in G$ and $\alpha\in A_{s(g)}$.
\item For all
\[ z\stackrel{g}{\lmap} y \stackrel{h}{\lmap} x,\ X\in T_xM, \ \alpha\in A_x,\]
one has:
\begin{equation}\label{equation curvature X}
 \lambda_{g}\lambda_{h}(X)-\lambda_{gh}(X) =
\rho(K^{\textrm{bas}}_{\sigma}(g, h)(X)),
\end{equation}
\begin{equation}\label{equation curvature alpha}
\lambda_{g}\lambda_{h}(\alpha)- \lambda_{gh}(\alpha)=
K^{\textrm{bas}}_{\sigma}(g, h)(\rho(\alpha)).
\end{equation}
\item For all
\[ z\stackrel{g}{\lmap} y \stackrel{h}{\lmap} x \stackrel{k}{\lmap} w,\]
the basic curvature satisfies the cocycle equation:
\begin{equation}\label{equation cocycle curvature}
\lambda_{g}K^{\textrm{bas}}_{\sigma}(h, k)-
K^{\textrm{bas}}_{\sigma}(gh, k)+ K^{\textrm{bas}}_{\sigma}(g, hk)-
K^{\textrm{bas}}_{\sigma}(g, h)\lambda_k= 0.
\end{equation}
\end{enumerate}
\end{proposition}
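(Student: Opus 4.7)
The plan is to derive all three relations by projecting the defining equation of $K^{\textrm{bas}}$ (and the splitting identity $r\circ\omega+\sigma\circ(ds)=\mathrm{Id}$) via $(dt)$ or $\omega$, using systematically the compatibilities of the groupoid structure maps with $r$, $l$, $\sigma$ and $\omega$. The toolbox I will first collect consists of the elementary identities $(dt)\circ r=\rho$, $(ds)\circ l=\rho$, $(dt)\circ l=0$, $\omega\circ r=\mathrm{Id}$, $\omega\circ l=-\lambda$, $(dt)\circ\sigma=\lambda$, $(ds)\circ\sigma=\mathrm{Id}$ (each immediate from the definitions of right and left translation); the equalities $(dt)\circ(dm)_{g,h}=(dt)_g\circ\mathrm{pr}_1$ and $(ds)\circ(dm)_{g,h}=(ds)_h\circ\mathrm{pr}_2$, coming from $t\circ m=t\circ\mathrm{pr}_1$ and $s\circ m=s\circ\mathrm{pr}_2$; the infinitesimal translation identities $(dm)_{g,h}(r_g(\beta),0)=r_{gh}(\beta)$, $(dm)_{g,h}(0,l_h(\beta))=l_{gh}(\beta)$ together with the vanishing $(dm)_{g,h}(l_g(\beta),r_h(\beta))=0$ (the last from $g\phi^{-1}_\epsilon\cdot\phi_\epsilon h=gh$ for $\phi_\epsilon\in s^{-1}(s(g))$ with $\dot\phi_0=\beta$); and the associativity $(dm)\circ(dm\times\mathrm{id})=(dm)\circ(\mathrm{id}\times dm)$.

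For part (1), I would apply the splitting identity to $v=l_g(\alpha)$, obtaining $l_g(\alpha)=-r_g(\lambda_g\alpha)+\sigma_g(\rho\alpha)$, and then project by $(dt)_g$: using $(dt)\circ l=0$, $(dt)\circ r=\rho$ and $(dt)\circ\sigma=\lambda$, the equivariance (\ref{equation map of complexes}) drops out immediately. For the first identity of part (2), I would apply $(dt)_{gh}$ directly to the defining identity $r_{gh}(K^{\textrm{bas}}_\sigma(g,h)(X))=\sigma_{gh}(X)-(dm)_{g,h}(\sigma_g(\lambda_hX),\sigma_h(X))$; the listed source/target compatibilities produce (\ref{equation curvature X}) in one line.

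The main obstacle is (\ref{equation curvature alpha}). Here I would start from the defining identity of $K^{\textrm{bas}}_\sigma(g,h)(\rho\alpha)$ and rewrite every $\sigma$-factor on the right via the splitting, $\sigma_g(\rho\beta)=l_g(\beta)+r_g(\lambda_g\beta)$ (with $\beta=\alpha$ and $\beta=\lambda_h\alpha$), using part (1) to identify $\lambda_h(\rho\alpha)$ with $\rho(\lambda_h\alpha)$ so that the substitution is legal. This turns the expression inside $(dm)$ into a sum of four terms; splitting the pair into the composable summands $(l_g(\lambda_h\alpha),r_h(\lambda_h\alpha))$ and $(r_g(\lambda_g\lambda_h\alpha),l_h(\alpha))$ allows the infinitesimal translation identities and the vanishing $(dm)(l_g,r_h)=0$ to collapse the right-hand side to $r_{gh}$ applied to $\lambda_{gh}\alpha-\lambda_g\lambda_h\alpha$; injectivity of $r_{gh}$ then yields (\ref{equation curvature alpha}). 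Finally, for part (3), I would evaluate $\sigma_{ghk}(X)$ via the defining identity of $K^{\textrm{bas}}$ in two ways, grouping as $(gh)\cdot k$ and as $g\cdot(hk)$, and expanding the inner $\sigma$-lifts once more. Using associativity of $m$ to identify the iterated $(dm)$-terms, and using part (2) to rewrite $\sigma_g(\lambda_{hk}X)$ as $\sigma_g(\lambda_h\lambda_k X)+\sigma_g(\rho K^{\textrm{bas}}(h,k)X)$ (which is precisely what makes the composable decomposition feasible, as in the previous step), the iterated lifts cancel on the nose; collecting the remaining $r_{ghk}$-contributions and once more using injectivity of $r_{ghk}$ yields the cocycle (\ref{equation cocycle curvature}).
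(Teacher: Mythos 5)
Your proposal is correct and follows essentially the same route as the paper: all three identities are obtained by projecting the defining relation $r_{gh}\bigl(K^{\textrm{bas}}_{\sigma}(g,h)v\bigr)=\sigma_{gh}(v)-(dm)_{g,h}(\sigma_g(\lambda_h v),\sigma_h(v))$ through $dt$ (resp.\ $\omega$, which is equivalent to using injectivity of $r$), expanding $\sigma$-lifts via the splitting identity $\sigma_g(\rho\beta)=l_g(\beta)+r_g(\lambda_g\beta)$, and invoking the translation identities together with $(dm)_{g,h}(l_g\beta,r_h\beta)=0$ and associativity of $m$ --- exactly the ingredients of the paper's computation, with your two-sided expansion of $\sigma_{ghk}$ in part (3) being only a symmetric repackaging of the paper's direct manipulation of $K^{\textrm{bas}}_{\sigma}(g,hk)+\lambda_g K^{\textrm{bas}}_{\sigma}(h,k)$. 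The only discrepancy is an overall sign in (\ref{equation curvature X})--(\ref{equation curvature alpha}) relative to the stated proposition, but this traces back to a sign ambiguity already present between the paper's definition of $K^{\textrm{bas}}_{\sigma}$ and its own computations, not to an error in your argument.
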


\begin{proof}
For equation (\ref{equation map of complexes}) we compute:
\begin{align*}
\rho(\lambda_{g}(\alpha)) &=-d t_g \circ r_g \circ \omega_g \circ l_g (\alpha)\\
&= -d t_g  \circ l_g(\alpha)+d t_g \circ \sigma_g
\circ ds_g \circ l_g(\alpha)\\
&=\lambda_{g}(\rho(\alpha)).
\end{align*}
Next, we prove equation (\ref{equation curvature X}):
\begin{align*}
\rho(K^{\textrm{bas}}_{\sigma}(g, h)(X))&=d t_{gh} \circ l_{gh}
\circ \omega_{gh} \circ
d m_{g,h}(\sigma_g(\lambda_{h}(X)),\sigma_{h}(X))\\
&=d t_{gh} d m_{g,h}(\sigma_g(\lambda_{h}(X)),\sigma_h(X))\\
&\quad -d t_{h,g}\circ \sigma_{gh} \circ ds_{gh}\circ  d m_{g,h}(\sigma_g(\lambda_{h}(X)),\sigma_h(X))\\
&= d t_{gh} \sigma_g(\lambda_{h}(X))-d t_{gh} \sigma_{gh}(X)\\
&=\lambda_{g}(\lambda_{h}(X))-\lambda_{gh}(X).
\end{align*}

In order to prove equation (\ref{equation curvature alpha}) we will
use the fact that
\begin{equation}\label{easy sigma and D^0}
\sigma_g(\rho(\alpha))=l_g(\alpha)+r_g(\lambda_{g}(\alpha)).
\end{equation}
With this in mind, we can compute:
\begin{align*}
K^{\textrm{bas}}_{\sigma}(g, h)(\rho(\alpha))&=\omega_{gh}(d m_{g,h}(\sigma_g(\lambda_{h}(\rho(\alpha))),\sigma_h(\rho(\alpha))))\\
&=\omega_{gh}(d m_{g,h}(\sigma_{g}(\rho(\lambda_{h}(\alpha))),\sigma_h(\rho(\alpha))))\\
&=\omega_{gh}(d m{g,h}(r_g(\lambda_{g} \lambda_{h}(\alpha)),0))\\
&\quad +\omega_{gh}(d m_{g,h}(0,l_{h}(\alpha)))\\
&\quad +\omega_{gh}(d m_{g,h}(l_g(\lambda_{h}(\alpha)),r_h(\lambda_{h}(\alpha))))\\
&=\lambda_{g} \lambda_{h}(\alpha)+\omega_{g,h} l_{gh}(\alpha)=
\lambda_{g} \lambda_{h}(\alpha)-\lambda_{gh}(\alpha).
\end{align*}
In the computation we used the fact that
\begin{equation}\label{product is zero}
dm_{g,h}(l_g(\beta),r_h(\beta))=0, \text{   } \forall \beta \in A_y,
\end{equation}
as one easily verifies.

Finally, we will prove equation (\ref{equation cocycle curvature}).
Using equations $(\ref{equation curvature X})$ and $(\ref{easy sigma
and D^0})$ one can easily show that:
\begin{eqnarray*}
K^{\textrm{bas}}_{\sigma}(g,hk)(X)+\lambda_{g}\circ
K^{\textrm{bas}}_{\sigma}(h,k)(X),
\end{eqnarray*}
is equal to the expression
\begin{equation*}
\omega_{ghk}\circ d m_{g,hk}(\sigma_g(\lambda_{h}\circ
\lambda_{k})(X)-l_g(K^{\textrm{bas}}_{\sigma}(h,k)(X)),\sigma(hk)(X))(\clubsuit).
\end{equation*}
Next, we can use equation $(\ref{product is zero})$ to prove
that:
\begin{equation*}
(\clubsuit)=\omega_{ghk}\circ d m_{g,hk}(\sigma_g(\lambda_{h}\circ
\lambda_{k}(X)),d
m_{h,k}(\sigma_h(\lambda_{k}(X)),\sigma_k(X)))(\maltese).
\end{equation*}
Then, using the associativity of the multiplication one can show
that:
\begin{equation*}
(\maltese)=K^{\textrm{bas}}_{\sigma}(gh,k)(X)+K^{\textrm{bas}}_{\sigma}(g,h)(\lambda_{k}(X)).
\end{equation*}
This completes the proof.
\end{proof}

\section{Representations up to homotopy}

\subsection{The category of representations up to homotopy}

In this section we introduce the notion of representations up to homotopy of Lie groupoids. 
We start with the most compact definition. Given a Lie groupoid $G$ over $M$, we will consider graded vector bundle
$E= \bigoplus_{l\in \mathbb{Z}} E^{l}$ over $M$,  which are bounded both from above as well as from below (bounded, on short).
The space of $E$-valued cochains on $G$ will be considered with the
total grading:
\[ \uC(G; E)^n= \bigoplus_{k+l= n} C^k(G; E^l).\]
Given $\eta\in C^k(G; E^l)$, we say that $\eta$ is of bidegree $(k, l)$ and we denote by $|\eta|= k+ l$
its total degree; we also say that $k$ is the cocycle degree of $\eta$.
As in the ungraded case, and by the same formulas, $\uC(G; E)$ is a right $C(G)$-module.

\begin{definition}\label{def-rep-up}
A representation up to homotopy of $G$ on a (bounded) graded vector bundle $E$
over $M$ is a linear degree one operator 
\[ D:\uC(G;E)^{\bullet}
\rightarrow \uC(G;E)^{\bullet}, \]
called the structure operator of the representation up to homotopy $E$, satisfying $D^2= 0$ and the 
Leibniz identity
\[ D(\eta \star f )= D(\eta)\star f+(-1)^k \eta \star  \delta(f)\]
for $\eta \in C(G;E)^k$ and  $f \in C^{\bullet}(G)$.
% \begin{enumerate}
% \item[(a)]\label{associativity condition}$D^2=0$,
%\item[(b)]\label{derivation}$D(\eta \star f )= D(\eta)\star f+(-1)^k \eta \star  \delta(f)$ for $\eta \in C(G,E)^k$ and  $f \in C^{\bullet}(G)$.
%\end{enumerate}
%\begin{equation}\label{associativity condition}
%D^2=0
%\end{equation}
%and
%\begin{equation*}\label{derivation}
%D(\eta \star f )= D(\eta)f+(-1)^k \eta \star  \delta(f)
%\end{equation*}

A morphism $\Phi: E\rmap E'$ between two 
representations up to homotopy $E$ and $E'$ is a degree zero
$\uC^{\bullet}(G)$-linear map
\begin{equation*}
\Phi:\uC(G;E)^{\bullet}\rightarrow \uC(G;E')^{\bullet+ 1},
\end{equation*}
that commutes with the structure operators of $E$ and $E'$.  
%We denote by $\textrm{Hom}_{\uRRep^{\infty}}(E, F)$ the space of such morphisms. 

We denote by 
$\uRRep^{\infty}(G)$ the resulting category and by $\uRep^{\infty}(G)$ the set of isomorphism
classes of representations up to homotopy. 
\end{definition}

To make this more explicit, we consider the bigraded vector space 
$C_G(\uEnd(E))$ which, in 
bidegree $(k,l)$, is
\begin{equation*}
C^k_G(\uEnd^l(E))=\Gamma(G_k,\uHom(s^*(E^{\bullet}),t^*(E^{\bullet+l}))).
\end{equation*}
As before, $s$ and $t$ are the maps $s(g_1,\dots,g_k)=s(g_k)$ and
$t(g_1,\dots,g_k)=t(g_1)$. Similarly we consider the bigraded space $C_G( \uHom(E, F))$
for any two vector bundle $E$ and $F$.

\begin{proposition}\label{decomposition with respect to F}
There is a bijective correspondence between representations up to
homotopy of $G$ on the graded vector bundle $E$ and sequences
$\{R_k\}_{k\geq 0}$ of elements $R_k\in C^{k}(G; \uEnd^{1-k}(E))$
which, for all $k\geq 0$, satisfy:
\begin{equation}\label{structure equations}
\sum_{j=1}^{k-1} (-1)^{j} R_{k-1}(g_1, \ldots , g_jg_{j+1}, \ldots, g_k)  = 
\sum_{j=0}^{k} (-1)^{j} R_{j}(g_1, \ldots , g_j) \circ R_{k-j}(g_{j+1}, \ldots , g_k). 
\end{equation}

Given two representations up to homotopy $E$ and $E'$, with corresponding sequences
$\{R_k\}$ and $\{R_{k}^{'}\}$, respectively, there is a bijective correspondence between 
morphisms $\Phi: E\rmap F$ of representations up to homotopy and sequences
$\{\Phi_k\}_{k\geq 0}$ of elements $\Phi_k\in \uC^k_G(\uHom^{-k}(E,E'))$
which, for all $k\geq 0$, satisfy
\begin{eqnarray}\label{equations for a map}
\sum_{i+j=k}(-1)^{j}\Phi_j(g_1,\dots,g_j) &\circ &
R_{i}(g_{j+1},\dots,g_k)=\\&=&\sum_{i+j=k}R_{j}^{'}(g_1,\dots , g_j)
\circ \Phi_{i}(g_{j+1},\dots,g_k)\nonumber\\
&&+\sum_{j=1}^{k-1}(-1)^{j}\Psi_{k-1}(g_1,\dots,g_jg_{j+1},\dots
,g_k).\nonumber
\end{eqnarray}
\end{proposition}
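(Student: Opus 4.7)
My plan is to exploit the bigrading on $\uC(G; E) = \bigoplus_{k, l} C^k(G; E^l)$ and decompose the structure operator $D$ and morphisms $\Phi$ into components of definite bidegree, then translate the defining algebraic identities into a family of equations indexed by the cocycle-degree shift. This strategy runs in parallel with Lemma \ref{lemma-quasi-action}, extended to all bundle degrees and to higher curvatures.

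Since $D$ has total degree one, I would write $D = \sum_{j \geq 0} D_j$ with $D_j \colon C^k(G; E^l) \to C^{k+j}(G; E^{l+1-j})$; the sum is finite because $E$ is bounded. Applying the Leibniz identity with $f \in C^{k'}(G)$ and projecting onto bidegrees shows that for $j \neq 1$ each $D_j$ is $\uC^{\bullet}(G)$-linear, while for $j = 1$ the failure of $\uC^{\bullet}(G)$-linearity matches exactly that of the bare simplicial differential $\delta$ on groupoid cochains, so $D_1$ minus the universal simplicial differential is again $\uC^{\bullet}(G)$-linear. A $\uC^{\bullet}(G)$-linear operator of bidegree $(j, 1-j)$ is nothing more than a section $R_j \in C^j(G; \uEnd^{1-j}(E))$ acting by composition on cochains. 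This establishes the bijection between operators $D$ satisfying Leibniz and sequences $\{R_k\}$, with an explicit reconstruction formula generalizing the one displayed just before Lemma \ref{lemma-quasi-action}: the simplicial piece of $D$ accounts for the ``face'' terms and the $R_j$'s account for the ``tensorial'' terms.

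The equation $D^2 = 0$ splits into one equation per total shift: $\sum_{i + j = k} D_j D_i = 0$. Substituting the explicit formulas for the $D_j$ in terms of the $R_j$, the compositions of tensorial parts produce the right hand side of (\ref{structure equations}), while composing the simplicial piece of $D_1$ against $D_{k-1}$ (on either side) produces the left hand side, in which the face maps insert products $g_p g_{p+1}$. Grouping the resulting terms yields (\ref{structure equations}) for every $k$; reversing the argument shows that any sequence $\{R_k\}$ satisfying (\ref{structure equations}) gives rise to a $D$ with $D^2 = 0$.

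For morphisms, $\Phi$ is already $\uC^{\bullet}(G)$-linear of total degree zero, so it decomposes directly as $\Phi = \sum_j \Phi_j$ with $\Phi_j \in \uC^j_G(\uHom^{-j}(E, E'))$ by the same tensor argument, without any simplicial correction. The intertwining condition $\Phi \circ D = D' \circ \Phi$ then decomposes by bidegree; writing both sides in terms of the $\Phi_j$, $R_i$ and $R_i'$ and isolating the simplicial pieces of $D_1$ and $D_1'$ reproduces equation (\ref{equations for a map}), with the face-map terms $\Phi_{k-1}(g_1, \ldots, g_p g_{p+1}, \ldots, g_k)$ coming precisely from pairing $\Phi_{k-1}$ with the simplicial part of $D_1$ and $D_1'$. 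The principal nuisance throughout is the bookkeeping of Koszul signs and of the sign conventions already present in the formula for $D_\lambda$; no conceptually new ingredients are needed beyond those used in Lemma \ref{lemma-quasi-action}.
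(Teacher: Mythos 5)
Your proposal is correct and takes essentially the same route as the paper: decompose $D$ (resp.\ $\Phi$) into bidegree components, use the Leibniz identity together with the fact that $\uC(G;E)$ is generated over $\uC^{\bullet}(G)$ by $\Gamma(M,E)$ to identify every component except the $(1,0)$ one with left multiplication by a section of an $\uEnd$ (resp.\ $\uHom$) bundle, recognize the $(1,0)$ component as a graded quasi-action (simplicial differential plus a tensorial term $R_1$), and expand $D^2=0$ and the intertwining relation degree by degree. The paper merely packages the same computation more formally, via the $\star$-algebra structure on $C_G(\uEnd(E))$ and the derivation property of the operator $\hat{D}_0$.
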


Before looking at the proof, let us briefly discuss the statement.

\begin{remark} In order to get an intuitive interpretation of
representations up to homotopy, let us now look at the structure equations (\ref{structure equations}) for low values of $k$.
\begin{itemize}
\item Equation (\ref{structure equations}) for $k= 0$ says that $\partial:= R_0: E^{\bullet}\rmap E^{\bullet+1}$ makes $E$ into a cochain
complex of vector bundles.  
\item Interpreting $\lambda:= R_1$ as a graded quasi-action
of $G$ on $E$, (\ref{structure equations}) for $k= 1$ says that 
$\lambda_g \partial =\partial \lambda_g$, i.e. the quasi-action is by maps of cochain complexes.

\item For $k=2$, equation(\ref{structure equations})  is
\begin{equation*}
\lambda_{g_1} \circ \lambda_{g_2}-\lambda_{g_1g_2}=\partial \circ
R_2(g_1,g_2) +R_2(g_1,g_2) \circ \partial,
\end{equation*}
which says that the quasi-actions are not necessarily associative,
but the operator $R_2$ provides homotopies between the cochain maps
$\lambda_{g_1} \circ \lambda_{g_2}$ and $\lambda_{g_1g_2}$. The
higher order equations are further compatibility conditions between
the homotopies.
\end{itemize}
\end{remark}

\begin{example} {\bf (Usual representations)} 
Of course, any ordinary representation $E$ of $G$ is an example of a
representation up to homotopy concentrated in degree zero.
\end{example}

\begin{example} {\bf (Cocycles)} 
Any closed cocycle $\eta \in C^k(G)$ with $k\geq 2$ induces a representation up to
homotopy structure in the complex which has the trivial line bundle
$\mathbb{R}$ in degrees $0$ and $k-1$, with zero differential. The
structure operators are $R_1=\lambda$ and $R_k= \eta$, where
$\lambda$ denotes the trivial representation of $G$ in $\mathbb{R}$.
The isomorphism class of this representation depends only on the
cohomology class of $\eta$ in $\uHdiff ^k(G)$.
\end{example}

\begin{definition} We will say that a representation up to homotopy is
weakly unital if $R_1(g)= \textrm{Id}$ whenever $g\in G$ is a unit. We say that it is unital
if, further, $R_k(q_1, \ldots, g_k)=0$ whenever $k\geq 2$ and one of the entries is a unit. 
%We will say that a morphism $\Phi: E\rmap E'$ is strict if
%$\Phi_k= 0$ for all $k\geq 1$. We denoted by $\uRep^{\infty}(G)_{\textrm{str}}$ the resulting category.
\end{definition}

\begin{remark}\label{variations} {\bf (Variations)} 
If we want to view $\uRep^{\infty}(G)$ as an invariant of the groupoid, then there are some natural variations
to consider. First of all, it would be natural to require unitality. All the examples we have in  
mind are unital except the ones coming from cocycles (the last class of examples above). Nevertheless, these
are weakly unital and, by a normalization process, they can be made unital. Actually, one can view the normalization
process as a correspondence which associates to a weakly unital representation up to homotopy $E$ a unital one,
isomorphic to $E$ itself.

Another natural variation (which may be needed for better functorial properties) is obtained by allowing unbounded
complexes.  In that case, one should take Proposition  \ref{decomposition with respect to F} as the definition of representations up to homotopy
and morphisms between them. It is still possible to have a compact description, similar to Definition \ref{def-rep-up}, but one
needs some care. More precisely, $C(G, E)$ comes with a filtration by the cocycle degree
\[ F_p\uC(G; E)^n= \bigoplus_{k+l= n, k\geq p} C^k(G; E^l),\]
and one has to replace $C(G, E)$ by its completion with respect to this filtration, denote it $\bar{C}(G, E)$. This is just
\[ \bar{C}(G, E)^n= \Pi _{k+l= n} C^k(G; E^l),\]
whose elements should be thought of as infinite sums $\sum_{k\geq 0} \eta_k$ (with the index indicating the cocycle degree).
Note that $\bar{C}(G, E)$ is itself a right $C(G)$-module and it inherits a filtration $F_p$ (similar to the previous one, but
using products instead of sums). With these, one has to replace $C(G, E)$ by $\bar{C}(G, E)$ in Definition \ref{def-rep-up}
and require all the operators to be continuous.
\end{remark}

\begin{remark}\label{on-the-structure} {\bf (The derived category)} 
Still with the mind at $\uRep^{\infty}(G)$ as being an invariant of the groupoid, it is natural (and necessary- see e.g.
our comments below on tensor products) to consider the richer structure hidden behind the category
$\uRRep^{\infty}(G)$. For instance, making use of our DG approach to representations up to homotopy,
$\uRRep^{\infty}(G)$ can immediately be made into a DG category $\underline{\uRRep}^{\infty}(G)$. 
Let $\textrm{Hom}$ denote the hom-spaces
in $\uRRep^{\infty}(G)$. Then $\underline{\uRRep}^{\infty}(G)$
has the same objects as $\uRRep^{\infty}(G)$, but new graded hom-spaces:
\[ \underline{\textrm{Hom}}^{\bullet}(E, F)= \bigoplus_{l} \underline{\textrm{Hom}}^{l}(E, F),\]
where $\underline{\textrm{Hom}}^{l}(E, F)$ consists of $C^{\bullet}(G)$-linear maps which rise the
total degree by $l$:
\[ \Phi: C(G, E)^{\bullet}\rmap C(G, F)^{\bullet+ l}.\]
%\[ D: \textrm{Hom}_{\uRep^{\infty}}^{l}(E, F)\rmap \textrm{Hom}_{\uRep^{\infty}}^{l+1}(E, F), \ D
That this is a DG category is just a reflection of the fact that the graded hom-spaces $\underline{\textrm{Hom}}^{\bullet}$ are cochain complexes,
and the fact that the composition
is defined at this level. 
Here, the differential $D$ on $\underline{\textrm{Hom}}^{\bullet}(E, F)$ is induced from the structure operators of $E$ and $F$ by
\[ D(\Phi)= D_{F}\circ \Phi- (-1)^{|\Phi|} \Phi\circ D_{E} .\]
The strict category associated to a DG category has the same objects, but as hom-sets
the space $Z^0\underline{\textrm{Hom}}^{\bullet}$ of elements in $\underline{\textrm{Hom}}^0$ which are closed (with respect to $D$). In our case we get
\[ Z^0\underline{\textrm{Hom}}(E, F)= \textrm{Hom}(E, F),\]
i.e. our category $\uRRep^{\infty}(G)$.

By considering $\underline{\uRRep}^{\infty}(G)$ instead of $\uRRep^{\infty}(G)$, we offer ourselves a slightly more general point of view which allows
us to talk about homotopies between morphisms, functors ``up to homotopy'', the associated 
homotopy category, etc. For instance, for two representations up to homotopy $E$ and $F$, the set of homotopy classes of maps
from $E$ to $F$ is defined by
\[ [E, F]:= H^{0}(\underline{\textrm{Hom}}^{\bullet}(E, F))= \textrm{Hom}(E, F)/\sim\]
where $\sim$ is the homotopy relation: two morphisms $\Phi$ and $\Psi$ are homotopic if there exists
a $C(G)$-linear map $H: C(G, E)\rmap C(G, E)$ lowering the total degree by $1$, such that $\Phi- \Psi= D_{F}H+ HD_{E}$.

\begin{definition}\label{dervd-def} We define the derived category of $G$, denoted $\mathcal{D}\textrm{er}(G)$ with objects are 
the representations up to homotopy of $G$ and with hom-sets the $[E, F]$'s defined above.
\end{definition}

Let us mention here that, from an algebraic point of view,  our notion of representation up to homotopy is related to the
standard DG and derived categories of (DG) modules over a (DG) algebra. What happens in our case is that, if we replace
$C^{\bullet}(G)$ by an arbitrary DGA $(A, \star, \delta)$, we are not interested in all DG modules, but only those 
which are, as graded right $A$-modules, those of type $\mathcal{M}= \mathcal{E}\otimes_{A^0} A$, where $\mathcal{E}$ is
a graded $A^{0}$-module which is finitely generated projective in each degree. This explains why we define the hom's in the
derived category using homotopy classes of maps- which is possible in general only for cofibrant objects; see \cite{Hovey, Keller}
and also our Proposition \ref{rem-derived} below. Of course, one could consider general DG modules over $C(G)$, but in such an
algebraic approach we loose the geometric content of the situation, we believe, and we certainly loose some of the 
important results (e.g. the vanishing theorem of subsection \ref{The vanishing theorem}).
\end{remark}

The remainder of this section is devoted to the proof of Proposition \ref{decomposition with respect to F}.
The main observation is that 
the space $\uC(G,E)$ is generated as a
$\uC^{\bullet}(G)$-module by $\Gamma(M,E)$- hence, from the Leibniz rule for $D$ (or the $C(G)$-linearity of $\Phi$), 
the maps are determined by what they do on $\Gamma(E)$. The rest is just computations which could be done directly
once the correspondence is made explicit. Instead,
we will discuss some of the structure underlying the computations, which makes them much more 
transparent. 

First of all, we point out that for any graded vector bundle
$E$ over $M$, $C_G( \uEnd(E))$ is actually a bigraded algebra
which can be identified with the endomorphism algebra of the
(right!) $C(G)$-module $C(G; E)$. First, for $T\in  C^k_G(\uEnd^l(E))$
and $T'\in C^{k'}_G(\uEnd^{l'}(E))$, $T\star T'$ is defined by
\begin{equation}\label{formula-*}
(T\star T')(g_1, \ldots , g_{k+k'})= (-1)^{k(k'+l')}T(g_1, \ldots , g_k) \circ T'(g_{k+1}, \ldots , g_{k+k'}).
\end{equation}
Also, any $T\in C^k_G( \uEnd^{l}(E))$ acts on $\eta\in C^{k'}(G; E^{l'})$ by a similar formula:
\[ (T\star \eta)(g_1, \ldots , g_{k+k'})= (-1)^{k(k'+l')}T(g_1, \ldots , g_k)(\eta(g_{k+1}, \ldots , g_{k+k'})).\]
It is now straightforward to check that, indeed, $C_G( \uEnd(E))$ can be identified with the endomorphism algebra of the
$C(G; E)$. 

\begin{lemma} The correspondence which associates to $T\in C_G( \uEnd(E))$ the left multiplication by $T$,
\[ L_{T}: \uC(G;E)\to \uC(G;E), \ \eta\mapsto T\star\eta ,\]
defines a 1-1 correspondence between
\begin{enumerate}
\item elements $T\in  \uC^k_G( \uEnd^l(E))$.
\item operators on $\uC(G; E)$ which rise the bigrading
by $(k,l)$ and which are $\uC(G)$-linear.
\end{enumerate}
\end{lemma}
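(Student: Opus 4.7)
The plan is to verify that $T \mapsto L_T$ is a well-defined map into the stated class of operators, then establish injectivity by recovering $T$ from the restriction of $L_T$ to degree-$0$ cochains, and finally establish surjectivity by constructing a $T$ from a given operator $D$ using only its values on $\Gamma(M,E)$.

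For well-definedness, the formula for $T \star \eta$ is the same kind of $\star$-product used to define the $C(G)$-module structure on $C(G;E)$, so the fact that $L_T$ raises bidegree by $(k,l)$ and is right $C(G)$-linear reduces to a direct associativity/sign check with formula (\ref{formula-*}) and the formula for $(\eta \star f)$. For injectivity, given $T$ of bidegree $(k,l)$ and an arbitrary $(g_1,\ldots,g_k)\in G_k$, $v\in E^{l'}_{s(g_k)}$, pick any $\alpha\in\Gamma(M,E^{l'})=C^0(G;E^{l'})$ with $\alpha(s(g_k))=v$; then by the definition of $\star$ applied to a degree-$0$ factor, $(T\star\alpha)(g_1,\ldots,g_k) = T(g_1,\ldots,g_k)(v)$ up to an explicit sign, so $L_T$ determines $T$ pointwise.

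For surjectivity, given $D$ as in (2), I would define $T(g_1,\ldots,g_k)(v):= \pm D(\alpha)(g_1,\ldots,g_k)$ for $v\in E^{l'}_{s(g_k)}$ and any extension $\alpha\in\Gamma(M,E^{l'})$ of $v$ (the sign chosen to invert the sign that appears in the previous paragraph). To check this is independent of the chosen $\alpha$, suppose $\alpha'(s(g_k))=\alpha(s(g_k))$; then $\gamma:=\alpha'-\alpha$ vanishes at $s(g_k)$ and can be written (locally, and then globally by a partition of unity) as $\sum_j f_j\beta_j$ with $f_j\in C^\infty(M)$ vanishing at $s(g_k)$ and $\beta_j\in\Gamma(E^{l'})$. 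Interpreting $f_j\beta_j$ as $\beta_j\star f_j$ and using the right $C(G)$-linearity of $D$ gives $D(\gamma)(g_1,\ldots,g_k) = \sum_j D(\beta_j)(g_1,\ldots,g_k)\,f_j(s(g_k)) = 0$ by the definition of $\star$ at a degree-$0$ factor. Smoothness of $T$ then follows because one can make a local choice of $\alpha$ depending smoothly on $(g_1,\ldots,g_k)$ through $s(g_k)$.

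It remains to verify $L_T = D$. Both are right $C(G)$-linear, and they agree on $C^0(G;E)=\Gamma(M,E)$ by construction of $T$; so it suffices to observe that $C(G;E)$ is generated as a right $C(G)$-module by $C^0(G;E)$. Explicitly, using a local frame of $E^{l'}$ on $U\subset M$ and a partition of unity subordinate to a cover $\{t^{-1}(U_i)\}$ of $G_{k'}$, any $\eta\in C^{k'}(G;E^{l'})$ is a finite sum $\sum_i \alpha_i\star f_i$ with $\alpha_i\in C^0(G;E^{l'})$ and $f_i\in C^{k'}(G)$. The main obstacle I expect is bookkeeping the signs in the $\star$-product and making this last generation statement precise; both are essentially routine but require attention, and the generation step is really the only ``non-formal'' ingredient, resting on the standard fact that sections of the pullback bundle $t^*E\to G_{k'}$ are generated over $C^\infty(G_{k'})$ by pullbacks of sections of $E\to M$.
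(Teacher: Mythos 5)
Your proposal is correct and follows essentially the route the paper intends: the paper omits a written proof of this lemma (calling it straightforward) but its stated ``main observation'' is precisely that $\uC(G;E)$ is generated as a $\uC^{\bullet}(G)$-module by $\Gamma(M,E)$, and the proof it does give for the analogous Lemma \ref{lemma-quasi-action} recovers the operator's value on a fiber from an extension $\alpha\in\Gamma(E)$ and checks independence of the extension by writing the difference as $f\gamma$ with $f$ vanishing at the base point and invoking $C(G)$-linearity, exactly as you do. Your sign bookkeeping and the generation statement for $\Gamma(G_{k'},t^*E)$ over $C^{\infty}(G_{k'})$ are the only points requiring care, and you handle both correctly.
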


Similarly, $C_G( \uHom(E, F))$ is identified by the space of $C(G)$-linear maps from $C(G;E)$ to $C(G; F)$,
for any two graded vector bundles $E$ and $F$. The resulting pairings
\begin{eqnarray}\label{various-pairings}
C_G( \uEnd(F))\times C_G( \uHom(E, F))\rmap C_G(\uHom(E, F)),\\
C_G( \uHom(E, F))\times C_G( \uEnd(F))\rmap C_G( \uHom(E, F)).
\end{eqnarray}
are still denoted by $*$ and are given by formulas similar to (\ref{formula-*}).

Next, we point out a simple variation of the notion of (quasi-)actions on graded vector bundles.
A quasi-action $\lambda$ on the graded vector bundle $E$ is said to be graded if all the operations $\lambda_g$ preserve the degree. 
Any such $\lambda$ defines an operator $\hat{D}_{\lambda}$ rising the bidegree by $(1,0)$ (hence the total degree by $1$): 
for $\eta\in C^{k}(G; E^l)$, 
\begin{eqnarray}
\hat{ D}_{\lambda}(\eta)(g_1, \ldots , g_{k+1}) & = & (-1)^{k+l}\{\lambda_{g_1}\eta(g_2, \ldots , g_{k+1})+ \nonumber \\
                                                               &  + & \sum_{i= 1}^{k}(-1)^i \eta(g_1, \ldots, g_ig_{i+1}, \ldots , g_{k+1})+ (-1)^{k+1} \eta(g_1, \ldots, g_k)\}. \nonumber
\end{eqnarray}
The sign is chosen so that the operator $\hat{D}_{\lambda}$ is a derivation of the (right) $C(G)$-module $C(G; E)$
with respect to the total degree. The graded version of Lemma \ref{lemma-quasi-action} follows now easily:

\begin{lemma} The construction $\lambda\mapsto \hat{D}_{\lambda}$ is a 1-1 correspondence between
graded quasi-actions $\lambda$ of $G$ on $E$ and degree $1$ operators $\hat{D}_{\lambda}$ on $C^{\bullet}(G; E)$ satisfying the
Leibniz identity
\[ \hat{D}_{\lambda}(\eta\star f)= \hat{D}_{\lambda}(\eta)\star f+ (-1)^{|\eta|} \eta\star d(f),\]
for $\eta\in C(G; E)$, $f\in C(G)$ (recall that $|\eta|$ is the total degree). 
\end{lemma}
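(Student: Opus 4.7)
The plan is to follow the same pattern as the ungraded statement (Lemma \ref{lemma-quasi-action}), the only novelty being the bookkeeping of signs that arise from the total grading $|\eta|=k+l$. The key structural fact is that $\uC(G;E)$ is generated as a right $\uC(G)$-module by $\Gamma(E)=\bigoplus_l \Gamma(E^l)$, viewed as the cocycle-degree-zero part $\uC^0(G;E)$. Consequently, any operator on $\uC(G;E)$ satisfying the graded Leibniz identity is determined by its restriction to $\Gamma(E)$.

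First I would verify that for a graded quasi-action $\lambda$, the operator $\hat{D}_{\lambda}$ defined by the displayed formula does satisfy the Leibniz identity $\hat{D}_{\lambda}(\eta\star f)=\hat{D}_{\lambda}(\eta)\star f+(-1)^{|\eta|}\eta\star d(f)$. This is a direct calculation: one writes out both sides on an element $(g_1,\dots,g_{k+k'+1})$ using the explicit formulas for $\star$, for the differential $d$ on $C^{\bullet}(G)$, and for $\hat{D}_{\lambda}$; after regrouping according to where the product index $g_ig_{i+1}$ falls (inside the first block, across the boundary, or inside the second block) the boundary terms cancel against the degenerate contribution $(-1)^{|\eta|}\eta\star d(f)$. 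The sign $(-1)^{k+l}$ in the definition of $\hat{D}_{\lambda}$ is precisely what is needed for this to work.

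Next I would construct the inverse correspondence. Given a degree-one operator $\hat{D}$ satisfying the graded Leibniz identity and preserving the $\uC(G)$-module structure in the obvious sense, define $\lambda_g:E^l_x\to E^l_y$ for $g:x\to y$ by
\begin{equation*}
\lambda_g(v)\;=\;\alpha(y)\;+\;(-1)^l\,\hat{D}(\alpha)(g),
\end{equation*}
where $\alpha\in\Gamma(E^l)$ is any section with $\alpha(x)=v$. The grading is automatically preserved because $\hat{D}$ raises the cocycle degree by one while fixing the bundle degree on $\Gamma(E^l)\subset\uC^0(G;E^l)$. To prove independence of the choice of $\alpha$, suppose $\beta$ is another such extension and write $\beta-\alpha=f\gamma$ with $f\in C^{\infty}(M)$, $f(x)=0$, and $\gamma\in\Gamma(E^l)$. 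Viewing $f\gamma$ as $\gamma\star f$, the Leibniz identity gives
\begin{equation*}
\hat{D}(\gamma\star f)(g)\;=\;\hat{D}(\gamma)(g)\,f(x)\;+\;(-1)^l\gamma(y)\bigl(f(x)-f(y)\bigr)\;=\;(-1)^{l+1}\gamma(y)f(y),
\end{equation*}
and substituting into the formula for $\lambda_g$ applied to $\beta$ instead of $\alpha$ yields an extra contribution $f(y)\gamma(y)+(-1)^l(-1)^{l+1}f(y)\gamma(y)=0$, proving well-definedness. Smoothness and linearity of $\lambda_g$ are inherited from those of $\hat{D}$.

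It remains to check that these two constructions are mutually inverse and that the claimed refinements hold. If one starts with $\lambda$ and recomputes $\lambda_g(v)$ from $\hat{D}_{\lambda}$ via the above formula, one gets back exactly $\lambda_g(v)$ by evaluating the $k=0$ instance of the defining formula of $\hat{D}_{\lambda}$. Conversely, starting from $\hat{D}$ and forming $\hat{D}_{\lambda}$, the two operators agree on $\Gamma(E)$ by construction, hence on all of $\uC(G;E)$ by the generation argument recalled at the outset. The main (only) obstacle throughout is careful sign bookkeeping; once the conventions are fixed as in the statement, every verification reduces to the corresponding one in the ungraded Lemma \ref{lemma-quasi-action} with the sign $(-1)^l$ systematically absorbed.
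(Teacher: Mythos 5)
Your proposal is correct and follows exactly the route the paper intends: the paper states that the graded lemma ``follows now easily'' from the ungraded Lemma on quasi-actions, whose proof is precisely your recover-the-action-from-sections argument ($\lambda_g(v)=\alpha(y)+(-1)^l\hat{D}(\alpha)(g)$, with well-definedness via the Leibniz identity applied to $\gamma\star f$), and your sign bookkeeping with the extra $(-1)^l$ checks out. The only point worth making explicit is that ``degree $1$ operator'' must be read as ``bidegree $(1,0)$'' (as the surrounding text indicates), since otherwise the full structure operator $D$ of a representation up to homotopy would be a counterexample to the bijection; you implicitly adopt this reading, which is the intended one.
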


Note that, in terms of the endomorphism algebra, a graded quasi-action is just an element of bidegree $(1, 0)$ 
\[ \lambda\in C^1(G, \uEnd^0(E)),\]
and the associated operator is just
\[ \hat{D}_{\lambda}= L_{\lambda}+ \hat{D}_0,\]
where $L_{\lambda}$ is multiplication by $\lambda$ and $\hat{D}_0$ is the operator corresponding to $\lambda= 0$. 
Next, note that the operator $\hat{D}_0$ induces an operator on $C_G(\uEnd(E))$: for $T\in C^k_G( \uEnd^l(E))$,
$\hat{D}_0(T)$ has bidegree $(k+1,l)$ and it is given by 
\[\hat{ D}_0(T)(g_1, \ldots , g_{k+1})  =  (-1)^{k+l} \sum_{i= 1}^{k}(-1)^i \eta(g_1, \ldots, g_ig_{i+1}, \ldots , g_{k+1}).\]
The fact that this is induced by the operator $\hat{D}_0$ on $C(G; E)$ is explained by the following straightforward lemma.

\begin{lemma} For any $T\in C_G( \uEnd(E))$ and any $\eta\in C(G;E)$,
\[ \hat{D}_0(T\star \eta)= \hat{D}_0(T)\star \eta+ (-1)^{|T|} T\star \hat{D}_0(\eta).\]
\end{lemma}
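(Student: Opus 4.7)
The plan is to verify the identity by a direct expansion of both sides using the definitions of $\hat{D}_0$ and the $\star$-product. Set $T\in C^k_G(\uEnd^l(E))$ and $\eta\in C^{k'}(G;E^{l'})$, so that $T\star\eta\in C^{k+k'}(G;E^{l+l'})$ has total degree $k+k'+l+l'$. First I would write out
\[
\hat{D}_0(T\star\eta)(g_1,\ldots,g_{k+k'+1})=(-1)^{k+k'+l+l'}\sum_{i=1}^{k+k'}(-1)^i(T\star\eta)(g_1,\ldots,g_ig_{i+1},\ldots,g_{k+k'+1}),
\]
and split this sum into the ranges $i=1,\ldots,k$ and $i=k+1,\ldots,k+k'$.

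The key observation is that for $i\leq k$, after the collapse the first $k$ arguments of the resulting $(k+k')$-tuple are $g_1,\ldots,g_ig_{i+1},\ldots,g_{k+1}$ while the last $k'$ are $g_{k+2},\ldots,g_{k+k'+1}$. Expanding $\star$ then produces exactly the terms that appear in $\hat{D}_0(T)(g_1,\ldots,g_{k+1})\cdot\eta(g_{k+2},\ldots,g_{k+k'+1})$. The potentially worrisome straddling index $i=k$, where one multiplies a ``$T$-argument'' with an ``$\eta$-argument'', causes no trouble: it is simply the last inner face of the $(k+1)$-tuple fed to $\hat{D}_0(T)$, and therefore is absorbed in the first term $\hat{D}_0(T)\star\eta$ of the Leibniz identity. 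For $i\geq k+1$, writing $j=i-k$, the first $k$ arguments are undisturbed and the collapse lies inside the $\eta$-block, yielding the terms of $T(g_1,\ldots,g_k)\cdot\hat{D}_0(\eta)(g_{k+1},\ldots,g_{k+k'+1})$.

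The only real obstacle is sign bookkeeping. I would reduce it to checking, modulo $2$, two parity identities: the global sign on the LHS for the range $i\leq k$ is $k+k'+l+l'+k(k'+l')$, which must match the sign of $\hat{D}_0(T)\star\eta$, namely $(k+1)(k'+l')+k+l$; and the global sign on the LHS for $i\geq k+1$ must match $|T|+k(k'+1+l')+k'+l' = (k+l)+k(k'+1+l')+k'+l'$, as required by the $(-1)^{|T|}$ prefactor in the second Leibniz term. Both checks are one-line parity computations. Once the signs agree term by term, the identity follows without any cancellations, confirming the ``straightforwardness'' asserted in the statement.
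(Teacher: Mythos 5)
Your overall strategy --- expand both sides, split the inner faces at the index $i=k$, and check two parity identities --- is the right one (the paper itself offers no proof, declaring the lemma straightforward, so a direct expansion is exactly what is called for), and both of the parity identities you set up are in fact correct. But your expansion of the left-hand side is incomplete, and the omission is precisely the one subtle point of this lemma. The operator $\hat{D}_0$ on $C(G;E)$ is $\hat{D}_{\lambda}$ with $\lambda=0$, and by the paper's displayed formula this kills only the twisted zeroth face $\lambda_{g_1}\eta(g_2,\dots)$; it still contains the last face, i.e.\ the term $(-1)^{k+1}\eta(g_1,\dots,g_k)$ in which the final arrow is dropped. By contrast, the induced operator $\hat{D}_0$ on $C_G(\uEnd(E))$ consists of inner faces only. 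Your formula
\[
\hat{D}_0(T\star\eta)(g_1,\ldots,g_{k+k'+1})=(-1)^{k+k'+l+l'}\sum_{i=1}^{k+k'}(-1)^i(T\star\eta)(g_1,\ldots,g_ig_{i+1},\ldots,g_{k+k'+1})
\]
uses the inner-faces-only version for an element of $C(G;E^{\bullet})$, so it is missing the summand $(-1)^{k+k'+l+l'}(-1)^{k+k'+1}(T\star\eta)(g_1,\dots,g_{k+k'})$. Consequently your claimed matching for the range $i\geq k+1$ cannot be term by term complete: the right-hand side $(-1)^{|T|}\,T\star\hat{D}_0(\eta)$ contains the term coming from $(-1)^{k'+1}\eta(g_{k+1},\dots,g_{k+k'})$ inside $\hat{D}_0(\eta)$, and in your expansion of the left-hand side there is nothing for it to pair with.

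The repair is short: include the dropped face on the left; it expands to $(-1)^{l+l'+1+k(k'+l')}T(g_1,\dots,g_k)\bigl(\eta(g_{k+1},\dots,g_{k+k'})\bigr)$, and the corresponding term of $(-1)^{|T|}T\star\hat{D}_0(\eta)$ carries the exponent $(k+l)+k(k'+1+l')+(k'+l')+(k'+1)\equiv l+l'+1+k(k'+l')\pmod 2$, so the two agree and this is a third (equally one-line) parity check alongside the two you already wrote down. With that term restored, the proof is correct and complete.
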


Note that, for any two graded vector bundles $E$ and $F$, there is a similar operator $\hat{D}_0$ on $C_G( \uHom(E, F))$
which satisfies the previous equations for all $T\in C_G( \uHom(E, F))$ and $\eta\in C(G; E)$. Similarly, the same equations
are satisfied with respect to the pairings (\ref{various-pairings}).

We now go back to Proposition \ref{decomposition with respect to F}. We start with the first part. Once we observe that $\uC(G;E)$ is generated as a
$\uC^{\bullet}(G)$-module by $\Gamma(M,E)$, the Leibniz identity for $D$ implies that 
$D$ decomposes as a sum 
\begin{equation}\label{decomposition}
D=D_0+D_1+ D_2+ \dots ,
\end{equation}
where:
\begin{itemize}
\item For each $i\geq 0$, $D_i$ comes is an operator  which ``rises'' the bidegree by $(i, 1-i)$. 
\item For $i\neq 1$, $D_i$ is $C(G)$-linear, hence it comes from left multiplication by an element
$R_i\in C^{i}_G( \uEnd^{1-i}(E))$.
\item For $i= 1$, $D_1$ is a derivation, hence it comes from a graded quasi-action $\lambda$,
also denoted $R_1$. 
\end{itemize}
In terms of the $R_i$'s, we have
\[ D(\eta)= \sum_{k\geq 0} R_k\star \eta+ \hat{D}_0(\eta).\]
The condition $D^2= 0$ can now be easily written out using the last lemma and it becomes
\[ \sum_{j, i} R_j\star R_i+ \sum_{i} \hat{D}_0(R_i) =0.\]
Looking at homogeneous degrees, the equation above becomes the set of equations
\[ \sum_{i+ j= k} R_j\star R_i+ \hat{D}_0(R_{k-1})= 0,\]
which are precisely the equations in the statement. The second part of the proposition is similar.
We obtain that $\Phi$ decomposes as 
\begin{equation*}
\Phi(\eta)= \Phi_0\star \eta+\Phi_1\star\eta+\Phi_2\star\eta+\dots,
\end{equation*}
where $\Phi_k\in \uC^k_G(\uHom^{-k}(E,E'))$. The fact that $\Phi$ commutes with the structure operators 
translates into:
\[\sum_{i, j} R_{j}^{'}\star \Phi_i+ \sum_j \hat{D}_0(\Phi_j)= \sum_{i, j} \Phi_j\star R_i .\]
Looking at homogeneous components, that means that for all $k$ one has
\[ \sum_{i+ j= k} R_{j}^{'}\star \Phi_i+ \hat{D}_0(\Phi_{k-1})= \sum_{i+ j= k} \Phi_j \star R_i.\]
Writing out $\star$ and $\hat{D}_0$, we obtain the equations from the second part of the proposition.

\subsection{Main example: the adjoint representation}

The aim of this subsection is to show that, associated to any Lie groupoid $G$, there is 
an ``adjoint representation'' which is a representation up to homotopy, well-defined up to isomorphism.
Again, this can be seen as a global version of the adjoint representation of Lie algebroids.
We refer the reader to \cite{AC1} for more explanations on the need of
graded vector bundles and on the choice of the underlying cochain complex (see Example 2.6. therein). 

We start with the underlying complex.

\begin{definition}
Given a Lie groupoid $G$ over $M$ with Lie algebroid $A$, the
adjoint complex of $G$ denoted $\uAd(G)$ is the complex of vector
bundles
\begin{equation*}
 \uAd(G):=A \stackrel{\rho}{\rmap} TM,
\end{equation*}
where $A$ has degree zero, $TM$ has degree one and $\rho$ is the
anchor map. We will write $\uAd$ instead of $\uAd(G)$ when the
groupoid is clear from the context.
\end{definition}

Next, to make $\uAd(G)$ into a representation up to homotopy, 
we first have to choose an Ehresmann connection $\sigma$ on $G$. 
As in our preliminary subsection on connections, we 
consider the induced quasi-actions $\lambda$ 
on $A$ and on $TM$ and  its basic curvature $K^{\textrm{bas}}_{\sigma}$. 
The equations that they satisfy (Proposition \ref{adjoint-basic-formulas}) can
now be interpreted as the structure equations of Proposition \ref{decomposition with respect to F}.
Using the language of representations up to homotopy, it is now easy to see that 
Proposition \ref{adjoint-basic-formulas} translates into the following.

\begin{proposition}\label{thm adjoint representation}
Let $G$ be a Lie groupoid and $\sigma$ a connection on $G$. Then:
the operators
\begin{equation*}
R_0=\rho,\hspace{2mm}  R_1=\lambda,\hspace{2mm}
R_2=K^{\textrm{bas}}_{\sigma},
\end{equation*}
give the adjoint complex $\uAd(G)$ the structure of a unital
representation up to homotopy.
\end{proposition}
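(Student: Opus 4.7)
The plan is to apply Proposition \ref{decomposition with respect to F}: a representation up to homotopy on the graded bundle $\uAd$ (with $A$ in degree $0$ and $TM$ in degree $1$) is equivalent to a sequence of operators $R_k \in \uC_G^{k}(\uEnd^{1-k}(\uAd))$ satisfying the structure equations (\ref{structure equations}), and the claim is that setting $R_0 = \rho$, $R_1 = \lambda$, $R_2 = K^{\textrm{bas}}_\sigma$, and $R_k = 0$ for $k \geq 3$ gives such a sequence. The proposed bidegrees match (the anchor has bidegree $(0, 1)$, the quasi-action $(1, 0)$, the basic curvature $(2, -1)$), so the entire verification reduces to checking the structure equations for each $k$, which will turn out to be exactly the identities already gathered in Proposition \ref{adjoint-basic-formulas}.

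First I would dispose of the extreme ranges by degree reasons. For $k = 0$ the structure equation asks $R_0 \circ R_0 = 0$, which is automatic because $\uAd$ vanishes in degrees outside $\{0, 1\}$. For $k \geq 4$ every term involves some $R_j$ with $j \geq 3$ except the term $R_2 \star R_2$ at $k = 4$; but this has internal degree $-2$ on the two-term complex $\uAd$ and therefore also vanishes. Then I would match the three intermediate cases directly against Proposition \ref{adjoint-basic-formulas}. The $k = 1$ equation becomes $\rho \circ \lambda_g = \lambda_g \circ \rho$, i.e.\ equation (\ref{equation map of complexes}). The $k = 2$ equation reads $\lambda_{g_1}\lambda_{g_2} - \lambda_{g_1 g_2} = \rho \circ K^{\textrm{bas}}_\sigma(g_1,g_2) + K^{\textrm{bas}}_\sigma(g_1,g_2) \circ \rho$; evaluating it on $X \in TM$ (where $K^{\textrm{bas}}_\sigma \circ \rho$ is killed, since $\rho$ vanishes on $TM$) and on $\alpha \in A$ (where $\rho \circ K^{\textrm{bas}}_\sigma$ is killed, since $K^{\textrm{bas}}_\sigma$ vanishes on $A$) recovers precisely equations (\ref{equation curvature X}) and (\ref{equation curvature alpha}). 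Finally, since $R_3 = 0$, the $k = 3$ equation collapses to the cocycle identity (\ref{equation cocycle curvature}) for $K^{\textrm{bas}}_\sigma$.

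For unitality I would argue as follows. An Ehresmann connection satisfies $\mathcal{H}_x = T_x M$ at identity elements, so $\sigma_{1_x}$ is the natural inclusion $T_x M \hookrightarrow T_{1_x} G$ and $\omega_{1_x}$ is the natural projection onto $A_x$; these immediately give $\lambda_{1_x} = \mathrm{Id}$ on both $T_x M$ and $A_x$. The defining formula for $K^{\textrm{bas}}_\sigma(g, h)$ then vanishes as soon as $g$ or $h$ is a unit, since in that case $\sigma_{gh}(v)$ and $(dm)_{g,h}(\sigma_g(\lambda_h(v)), \sigma_h(v))$ agree by the identity axioms for groupoid multiplication, and so $\omega_{gh}$ sends the difference to zero. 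The main obstacle I expect is not any single identity but the bookkeeping: the $\star$-product on $\uC_G(\uEnd(\uAd))$ carries Koszul signs that depend on the bidegrees of the factors, and one must align these carefully before declaring that the abstract structure equations (\ref{structure equations}) specialize to the exact signs and shapes of equations (\ref{equation map of complexes})--(\ref{equation cocycle curvature}); once the sign conventions are fixed consistently, the verification is purely mechanical.
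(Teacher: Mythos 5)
Your proof is correct and follows essentially the same route as the paper: reduce via Proposition \ref{decomposition with respect to F} to the structure equations (\ref{structure equations}) and observe that for $k=1,2,3$ these are exactly equations (\ref{equation map of complexes})--(\ref{equation cocycle curvature}) of Proposition \ref{adjoint-basic-formulas}, the remaining cases vanishing for degree reasons on the two-term complex. You in fact supply slightly more than the paper does, since you also spell out why the representation is unital ($\lambda_{1_x}=\mathrm{Id}$ and the vanishing of $K^{\textrm{bas}}_{\sigma}$ when an entry is a unit), a point the paper's proof leaves implicit.
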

\begin{proof}
In view of Proposition \ref{decomposition with respect to F}, in
order to prove the first statement we have to check a set of equations on the structure operators.
It is easy to see that these equations are precisely the ones that were proved to hold in Proposition \ref{adjoint-basic-formulas}.
\end{proof}
\begin{definition} Given a connection $\sigma$ on $G$, we denote by $\textrm{Ad}_{\sigma}(G)\in \uRRep^{\infty}(G)$ the
resulting representation up to homotopy of $G$.
\end{definition}

Next, we point out that, up to isomorphisms, i.e. for the element induced in $\uRep^{\infty}(G)$, the choice of the connection is irrelevant.

\begin{proposition} If $\sigma$ and $\sigma'$ are two connections on $G$, then the representations up to homotopy
$\uAd_{\sigma}(G)$ and $\uAd_{\sigma'}(G)$ are canonically isomorphic.
\end{proposition}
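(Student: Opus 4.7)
The plan is to construct an explicit canonical morphism $\Phi=\Phi_{\sigma,\sigma'}\colon \uAd_{\sigma}(G)\to \uAd_{\sigma'}(G)$ and show it is an isomorphism. By Proposition \ref{decomposition with respect to F}, such a morphism is described by a sequence $\{\Phi_k\}$ with $\Phi_k\in \uC^k_G(\uHom^{-k}(\uAd,\uAd))$. Since the adjoint complex is concentrated in degrees $0$ and $1$, the bundles $\uHom^{-k}(\uAd,\uAd)$ vanish for $k\geq 2$, so $\Phi$ is completely determined by $\Phi_0\in\Gamma(\uEnd^0(\uAd))$ and $\Phi_1\in \Gamma(G_1,\uHom(s^*TM, t^*A))$.

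First I would observe that the affine difference of the two connections yields a canonical candidate for $\Phi_1$. Since $\sigma_g$ and $\sigma'_g$ are both sections of $(ds)_g$, their difference lies in $\ker(ds)=\mathrm{Im}(r)$, so there is a unique $\Phi_1$ with $\sigma_g-\sigma'_g=r_g\circ \Phi_1(g)$, equivalently $\Phi_1(g)=-\omega_g\circ \sigma'_g=\omega'_g\circ \sigma_g$. I would then take $\Phi_0=\uid$ and all higher $\Phi_k=0$. Applying $\omega$ to the relation $\sigma-\sigma'=r\circ \Phi_1$ one also gets the dual identity $\omega-\omega'=-\Phi_1\circ ds$, which will be needed to handle the $A$-component below.

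Next I would verify the three nontrivial instances of equation (\ref{equations for a map}). For $k=0$ the equation is $\rho=\rho$, which is automatic since $\Phi_0=\uid$. For $k=1$ it reads $\lambda_g-\lambda'_g=\rho\circ \Phi_1(g)+\Phi_1(g)\circ \rho$; the component landing in $TM$ is $\lambda_g(X)-\lambda'_g(X)=\rho(\Phi_1(g)X)$, which follows from $dt\circ r=\rho$ together with $\sigma-\sigma'=r\circ \Phi_1$, while the component landing in $A$ is $\lambda_g(\alpha)-\lambda'_g(\alpha)=\Phi_1(g)(\rho(\alpha))$, which follows from $ds\circ l=\rho$ together with $\omega-\omega'=-\Phi_1\circ ds$. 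The main obstacle is the $k=2$ equation which, with $\Phi_2=0$, reads
\[ K^{\mathrm{bas}}_{\sigma}(g_1,g_2)-K^{\mathrm{bas}}_{\sigma'}(g_1,g_2)=\lambda'_{g_1}\circ \Phi_1(g_2)+\Phi_1(g_1)\circ \lambda_{g_2}-\Phi_1(g_1g_2). \]
This is the statement that the basic curvature changes, under change of connection, by the coboundary of $\Phi_1$ in the complex computing cohomology with coefficients in the adjoint representation up to homotopy. I would prove it by expanding both basic curvatures using their definition via $\omega$ and the multiplication map, substituting $\sigma-\sigma'=r\circ \Phi_1$, and reorganizing the terms; the cancellations are governed by the identity (\ref{product is zero}) and the associativity of the multiplication, in close parallel with the calculation of Proposition \ref{adjoint-basic-formulas}. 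This step is routine but is combinatorially the heaviest.

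Finally, to conclude that $\Phi$ is an isomorphism, I would observe that the swapped construction yields a morphism $\Psi=\Phi_{\sigma',\sigma}$ with $\Psi_0=\uid$ and $\Psi_1=-\Phi_1$. Since both $\Phi$ and $\Psi$ have only components of cocycle degree $\leq 1$, the composite morphism (computed in $\uRRep^{\infty}(G)$ via the $\star$-convolution) has zero-degree component $\uid$ and first-degree component $\Phi_1+\Psi_1=0$, hence agrees with the identity morphism of $\uAd_{\sigma}(G)$. This produces the desired canonical isomorphism, and a similar check shows that $\Phi_{\sigma',\sigma''}\circ \Phi_{\sigma,\sigma'}=\Phi_{\sigma,\sigma''}$, justifying the word "canonical".
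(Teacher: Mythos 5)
Your construction is essentially the paper's own proof: the same $\Phi_1(g)$ determined by $\sigma_g-\sigma'_g=r_g\circ\Phi_1(g)$, the same three structure equations to check (the $k=2$ one being the change-of-connection formula for the basic curvature, handled by the same expansion as in Proposition \ref{adjoint-basic-formulas}), and the same inverse obtained by negating $\Phi_1$. The only cosmetic difference is that the paper takes the degree-zero component to be $v\mapsto(-1)^{|v|}v$ rather than the identity, which merely reshuffles signs in the intermediate equations and yields exactly the $k=2$ identity you state.
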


\begin{proof}
We drop $G$ from the notation. We will construct an isomorphism
$\Psi$ from $\uAd_{\sigma}$ to $\uAd_{\sigma'}$. The bundle map is
given by:
\begin{equation*}
\Psi_0(v)=(-1)^{|v|}v.
\end{equation*}
there is also $\Psi_1 \in C^1_G(\uEnd^{-1}(\uAd))$, defined by:
\begin{equation*}
(\Psi_1)_g(X)=\sigma_g(X)-\sigma'_g(X).
\end{equation*}
In order to show that this is a map of representations we need to
prove the equations:
\begin{eqnarray*}
(\Psi_1)_g \circ \rho -\Psi _0 \circ {\lambda}^{\sigma}_g&=&\rho
\circ (\Psi_1)_g -
{\lambda}^{\sigma'}_g \circ \Psi_0,\\
(\Psi_1)_g \circ \lambda^{\sigma}_h-\Psi_0 \circ
K^{\textrm{bas}}_{\sigma}(g,h)&=& K^{\textrm{bas}}_{\sigma'}(g,h)
\circ \Psi_0 - \lambda^{\sigma'}_g \circ (\Psi_1)_h + (\Psi_1)_{gh},
\end{eqnarray*}
which follow from computations similar to those in the proof of
Proposition \ref{lemma equations for the adjoint}. It is clear that $\Psi$
is an isomorphism with inverse given by $ \Psi_0- \Psi_1$.
\end{proof}

\begin{definition} We denote by 
\[ \mathrm{Ad}_{G}\in \uRep^{\infty}(G)\] 
the isomorphism class of the representations up to homotopy $\mathrm{Ad}_{\sigma}(G)$.
\end{definition}

\subsection{Operations}

In this subsection we discuss some of the basic operations on representations up to homotopy. The full discussion of
symmetric powers (and of tensor products) is more involved and it is deferred to \cite{ACD}; here we only give
a brief outline. Also, since symmetric powers appear in the construction of the Bott spectral sequence, we 
introduce axiomatically the notion of $q$-th symmetric power operation.

\begin{example} {\bf (pull-back's)}
The category of representations up to homotopy behaves in a
natural way with respect to groupoid morphisms: any morphism of Lie groupoids
$\varphi:H \rightarrow G$ induces a functor
\begin{equation}
\label{pb-fct} 
\varphi^*: \uRRep^{\infty}(G)\rmap \uRRep^{\infty}(H).
\end{equation}
At the level of objects, for a representation up homotopy
$E= (E, R_0, R_1, \ldots )$ of $G$, $\varphi^*(E)$ has its structure
determined by
\begin{equation*}
\varphi^*(R_k)(h_1,\dots ,h_k)=R_k(\varphi(h_1),\dots,\varphi(h_k)).
\end{equation*}
The categories of ordinary representations of Morita equivalent groupoids
are equivalent, and therefore, this category is an invariant of the associated
stack. We expect that there is a similar invariance result for the
category of representations up to homotopy -at least at the level of the derived category.
\end{example}

\begin{example}\label{dual representation} {\bf (duals)}
Let $G$ be a groupoid and $E= (E, R_0, R_1,\ldots)$ a representation up to homotopy of $G$.
Then the dual of $E$ is $E^*= (E^*, R_{0}^{*}, R_{1}^{*}, \ldots )$, where
\begin{equation*}
R_k^{*}(g_1,\dots,g_k)= (-1)^{k+1} \left(
R_k({g_k}^{-1},\dots,{g_1}^{-1})\right)^*,
\end{equation*}
is a new representation up to homotopy of $G$. We would like to warn the reader that, 
with the notation $R_0=\partial$, 
the cochain complex underlying $E^*$ has $R_{0}^{*}=  -\partial^*$ which is not the standard 
dual of the cochain complex $(E, \partial)$ (which has as boundary $(-1)^n\partial^*$ on $E_{n}^{*}$).
\end{example}

\begin{example}\label{coadjoint} {\bf (coadjoint representation)}
Applying the previous construction to the adjoint representation, we obtain the coadjoint representation
of $G$, again well-defined up to isomorphism, with a representative $\textrm{Ad}_{\sigma}^{*}$ for each Ehresmann
connection $\sigma$ on $G$.
% Note however that the resulting complex $C(G, \textrm{Ad}_{\sigma}^{*})$
%can be described more intrinsically as the degree one row in the Bott-Shulman complex:
%\[ C(G, \textrm{Ad}_{\sigma}^{*}) \cong (\Omega^1(G), d_{\textrm{h}}).\]
%This follows from the isomorphisms $T_g\cong A_{t(g)}\oplus T_{s(g)}M$ induced by the connection $\sigma$. 
\end{example}

\begin{example}\label{mapping cones} {\bf (mapping cones)} The construction of mapping cones \cite{Weibel}
applies also to our situation. Recall first, in the case of complexes (of vector bundles, say), the mapping cone of a map
of complexes $\Phi_0: (E, \partial)\rmap (F, \partial)$ is a new complex of vector bundles, $C(\Phi_0)$, with
\[ C(\Phi_0)^n= E^n\oplus F^{n-1},\ \ \partial(e, f)= (\partial(e), \Phi(e)- \partial(f)) .\]
Similarly, given a morphism $\Phi: E\rmap F$ in $\uRRep^{\infty}(G)$, one can construct the mapping 
cone $C(\Phi)\in \uRRep^{\infty}(G)$, whose underlying complex is the mapping cone of $\Phi_0$; 
the structure operators are 
\[ R_k(g_1, \ldots , g_k) (e, f)= (R_k(g_1, \ldots , g_k)e, \Phi_k(g_1, \ldots , g_k)e- (-1)^k R_k(g_1, \ldots , g_k)f).\]
Actually, as a DG $C(G)$-module, $C(\Phi)$ 
is just  the mapping complex of the map $\Phi: C(G;E)\rmap C(G; F)$.
\end{example}

\begin{example} {\bf (tensor products)}  Given $E, E'\in \uRep^{\infty}(G)$,
defining their tensor product $E\otimes E'\in \uRep^{\infty}(G)$ is more subtle.
Denoting the structure elements $\{R_{k}:k\geq 0\}$ for $E$ and $\{R_{k}^{'}:k\geq 0\}$ for $E'$,
the similar operators for $E\otimes E'$ are clear in small degrees:
\begin{enumerate}
\item The degree zero element should be $R_{0}\otimes \textrm{Id}+\textrm{Id}\otimes R_{0}^{'}$
(so that the underlying complex is the tensor product of the underlying complexes of $E$ and $E'$).
\item The degree one term, i.e. the quasi-action, should be the diagonal quasi-action.
\end{enumerate}
However, for higher $k$'s, the $R_{k}$ is more subtle, is of combinatorial nature and is the
subject of \cite{ACD}. Here is a short outline. First, the possible operations associated to representations
up to homotopy are encoded graphically by trees. The construction of the tensor product depends on an universal 
choice (i.e. not depending on $G$)- a formal sum $\omega$ involving trees which satisfy a certain universal Maurer-Cartan equation
($\omega$ is called an universal Maurer-Cartan element). The resulting tensor product operation $E\otimes_{\omega} F$ 
does not depend on the choice of $\omega$, up to isomorphisms of representations up to homotopy. Morphisms are treated
similarly. However, the resulting ``functor''
\[ \cdot \otimes\cdot : \uRRep^{\infty}(G)\times \uRRep^{\infty}(G)\rmap \uRRep^{\infty}(G)\]
is a functor only up to homotopy (i.e. it respects the composition of morphisms only up
to homotopy). Of course, this fits into the discussion of Remark \ref{on-the-structure},
with the conclusion that $\otimes$ should be seen as an $\infty$-functor at the
level of the DG categories $\underline{\uRRep}^{\infty}$.
\end{example}

\begin{example}\label{axiomat} {\bf (symmetric powers)}
A similar discussion applies to symmetric powers $\uS^q$ of representations up to homotopy ($q$ non-negative integer), which we
need here in order to discuss  Bott spectral sequences. We refer again to \cite{ACD}
for the general discussion of these operations (existence and uniqueness). For the purpose of Bott spectral sequences, 
we only need to know what $\uS^q$ does on representations up to homotopy (and not on general
morphisms) and its basic properties. This allows us to work with an axiomatic description,
with a set of axioms which are much weaker than the properties satisfied by the constructions of
\cite{ACD}. This axiomatic approach has the advantage that in specific situations,
one can use simple versions of the symmetric power functor. 

In short, one would like to have a ``natural extension'' of the 
the usual symmetric power operations
\[ S^{q}: \textrm{Ch}(M)\rmap \textrm{Ch}(M) \]
defined on the category $\textrm{Ch}(M)$ of complexes of vector bundles over $M$
(with maps of complexes as morphisms). To explain the axioms, we need the following definition. 

\begin{definition}  We say that a morphism $\zeta: E\rmap F$ between two representations up to homotopy
is strict if all the components $\zeta_k$ with $k\geq 1$ vanish. 
\end{definition}

In other words, a strict morphism is just a map of complexes $\zeta: E\rmap F$ (between the complexes 
underlying $E$ and $F$) with the property that it is also a morphism of representations up to
homotopy.

\begin{definition}\label{axiomati}
Let $q$ be a non-negative integer. A $q$-th symmetric power operation on representations up to homotopy
is an operation $\uS^q$ which associates to any $E\in \uRep^{\infty}(G)$ 
a new $\uS^q(E)\in \uRep^{\infty}(G)$, with the following properties:
\begin{enumerate}
\item[(S1)] for any $E\in \uRep^{\infty}(G)$, 
the complex underlying $\uS^q(E)$
is just the $q$-th symmetric power $S^qE$ of the complex underlying $E$.
\item[(S2)] for any $E\in \uRep^{\infty}(G)$, the quasi-action 
underlying $\uS^q(E)$ is the diagonal quasi-action on $S^qE$ induced by the quasi-action
underlying $E$.
\item[(S3)] for any strict morphism $\zeta: E\rmap F$, the usual $q$-th symmetric power of the chain map
$\zeta$, $S^q\zeta: S^qE\rmap S^qF$, is a strict morphism from $\uS^qE$ to $\uS^qF$.
\item[(S4)] for any groupoid morphism $\varphi: G\rmap H$, $\uS^q$ commutes with the pull-back functor (\ref{pb-fct}). 
\end{enumerate}
\end{definition}

The construction of \cite{ACD} defines $\uS^q$ also at the level of morphisms, with the following properties
replacing (S3):
\begin{enumerate}
\item[(S3')] for any $\Phi: E\rmap F$ in $\uRRep^{\infty}(G)$, the morphism of complexes
underlying $\uS^q\Phi$ %, $(\uS^q\Phi)_0: S^qE\rmap S^qF$,
is the $q$-th symmetric power of the morphism
of complexes %$\Phi_0$ 
underlying $\Phi$. 
\item[(S3'')] $\uS^q$ preserves strict morphisms.
\end{enumerate}

Again, as in the case of tensor products, the full outcome of \cite{ACD} is best
expressed in the language of DG categories (see Remark \ref{on-the-structure}), when $\uS^q$ becomes
an $\infty$-functor on the DG category $\underline{\uRRep}^{\infty}(G)$. To make $\uS^q$ into a true functor, the most natural thing to do is to pass
to the homotopy or derived category of $G$. Alternatively (but only for some purposes),
one can stay as simple as possible, and pass to 
the subcategory of $\uRRep^{\infty}(G)$ with the same objects but only strict morphisms allowed- and
that is what the previous definition does.
\end{example}

\subsection{Cohomology and the derived category}

In this subsection we discuss the (differentiable) cohomology with 
coefficients in representations up to homotopy and quasi-isomorphisms. 
First of all, our definition of representations up to homotopy clearly comes with an
associated cohomology theory.

\begin{definition} Given a representation up to homotopy $E$ of $G$, with structure operator $D$, we define the
differentiable cohomology of $G$ with coefficients in $E$, denoted $\uHdiff^{\bullet}(M; E)$,
as the cohomology of the complex $(C(G; E), D)$. 
\end{definition}

When it comes to cohomology (but not only then), the
relevant notion of equivalence is the following.

\begin{definition} A morphism $\Phi: E\rmap E'$ between two 
representations up to homotopy of a groupoid
$G$ is called a quasi-isomorphism if the map of vector bundles
$\Phi_0$ induces isomorphisms in cohomology at every point.
\end{definition}

Following the standard strategy (see e.g. \cite{Hovey, Keller}), the derived category 
of $G$, $\mathcal{D}\textrm{er}(G)$ should be defined at this point, as the category
obtained from $\uRRep^{\infty}(G)$ by formally inverting quasi-isomorphisms. What happens is
that, since our objects have a cofibrant behavior (see also our Remark \ref{on-the-structure}), 
in our setting quasi-isomorphisms are the same thing as homotopy equivalences, i.e. 
$\Phi$'s which become invertible when passing to our derived category $\mathcal{D}\textrm{er}(G)$ 
(Definition \ref{dervd-def}). 

Instead of making a reference to the various (rather complicated) descriptions of cofibrant
DG modules, let us sketch a self-contained proof of the following:

\begin{proposition}\label{rem-derived} Any quasi-isomorphism $\Phi: E\rmap F$ between two representations up
to homotopy is a homotopy equivalences.
\end{proposition}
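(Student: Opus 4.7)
My plan is to reduce to an acyclicity statement and then apply a standard mapping-cone argument. Form $C(\Phi)\in \uRRep^{\infty}(G)$ as in Example \ref{mapping cones}. Since $\Phi_0$ is a pointwise quasi-isomorphism of bounded complexes of vector bundles, its underlying complex $C(\Phi_0)$ is pointwise acyclic. The reduction I aim for is: any $C\in \uRRep^{\infty}(G)$ whose underlying complex is pointwise acyclic has null-homotopic identity in the DG category $\underline{\uRRep}^{\infty}(G)$ of Remark \ref{on-the-structure}. Once this is established, the usual triangle argument (the mapping cone sits in a distinguished triangle $E\to F\to C(\Phi)\to E[1]$ which splits in the homotopy category when the last vertex is contractible) produces a homotopy inverse $\Psi:F\to E$ to $\Phi$.

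\textbf{From pointwise acyclicity to a bundle-level contraction.} The first step in proving the reduction is to upgrade pointwise acyclicity of the underlying complex $(C^{\bullet},\partial)$ to the existence of a $C^{\infty}(M)$-linear contracting homotopy $h:C^{\bullet}\rmap C^{\bullet-1}$. Because $C$ is bounded, pointwise acyclicity forces $\ker\partial^l$ and $\mathrm{im}\,\partial^{l-1}$ to have constant rank (inducting from the extremes) and to coincide, so the short exact sequences $0\to \ker\partial^l\to C^l\to \mathrm{im}\,\partial^l\to 0$ split smoothly, producing $h$. Extending by $\uC^{\bullet}(G)$-linearity gives $L_h\in \underline{\uHom}^{-1}(C,C)$ satisfying $\tilde{D}_0 L_h+L_h\tilde{D}_0=\uId$, where $\tilde{D}_0$ is the component of the structure operator $D_C$ given by left multiplication by $R_0=\partial$ (cf.\ Proposition \ref{decomposition with respect to F}).

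\textbf{Inductive construction of the null-homotopy.} Write $D_C=\tilde{D}_0+\tilde{D}_1+\tilde{D}_2+\cdots$ according to cocycle-degree shift, and look for $H=H^{(0)}+H^{(1)}+\cdots$ with $H^{(k)}$ raising cocycle degree by $k$, lowering total degree by $1$, and with $D_C\circ H+H\circ D_C=\uId$. Set $H^{(0)}:=L_h$. Assuming $H^{(0)},\ldots,H^{(k-1)}$ have been constructed so that the equation holds modulo terms of cocycle shift $\geq k$, the remaining obstruction is
\[
A^{(k)}\;:=\;\sum_{i+j=k,\, i>0}\bigl(\tilde{D}_i H^{(j)}+H^{(j)}\tilde{D}_i\bigr).
\]
Using $D_C^2=0$ together with the inductive hypothesis, one verifies the graded commutator $[\tilde{D}_0,A^{(k)}]=0$; then $H^{(k)}:=-L_h A^{(k)}$ satisfies $[\tilde{D}_0,H^{(k)}]=-A^{(k)}$, which is exactly what is required. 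Boundedness of $C$ ensures that on any fixed element only finitely many $H^{(k)}$ contribute, so $H=\sum H^{(k)}$ is well-defined.

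\textbf{Main obstacles.} The computational heart is the verification that each obstruction $A^{(k)}$ is $\tilde{D}_0$-closed; this is an exercise in carefully unpacking $D_C^2=0$ by cocycle degree and keeping track of the sign conventions in formula (\ref{formula-*}). A second point that needs care is the passage from pointwise acyclicity to the existence of a smooth bundle-level contraction $h$, which relies on the boundedness assumption; without it, constancy of ranks need not hold. Finally, one should check that the resulting null-homotopy of $\uId_{C(\Phi)}$ really does yield a strict morphism $\Psi:F\rmap E$ together with homotopies $\Psi\circ\Phi\sim \uId_E$ and $\Phi\circ\Psi\sim \uId_F$ via the standard block-matrix decomposition of the contraction on $C(\Phi)$.
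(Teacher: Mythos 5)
Your argument is correct and follows essentially the same route as the paper's: reduce via the mapping cone to showing that a representation up to homotopy with acyclic underlying complex is contractible, produce a smooth fibrewise contraction $h$ of the underlying complex of vector bundles, and then perturb it by the higher structure operators --- the paper merely sums your recursion in closed form as $H=h\bigl(1+(\delta_D h)+(\delta_D h)^2+\cdots\bigr)$ with $\delta_D=D-\partial$, choosing $h$ (via a metric and $h=\Delta^{-1}\partial^*$) so that $h^2=0$ and hence $H$ is manifestly $C^{\bullet}(G)$-linear, whereas in your inductive version the $C^{\bullet}(G)$-linearity of each $H^{(k)}$ also holds but deserves a sentence (the graded anticommutator of the derivation component $\tilde{D}_1$ with an odd $C^{\bullet}(G)$-linear operator is again $C^{\bullet}(G)$-linear, and all other ingredients of $A^{(k)}$ are linear). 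One small correction: the homotopy inverse $\Psi$ read off from the block decomposition of the contraction of $C(\Phi)$ is just a morphism of representations up to homotopy; there is no reason for it to be strict, nor does the statement require that.
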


\begin{proof} First two observations:
\begin{itemize}
\item If $(E, \partial)$ is a complex of vector bundles which is acyclic
(the cohomology bundle is trivial), then it is contractible: we find $h: E{\bullet}\rmap E^{\bullet -1}$
such that $h\partial+ \partial h+ \textrm{Id}= 0$. Moreover, one can choose $h$ such that $h^2= 0$. 
This is a standard result. One can e.g. choose a metric on $E$, consider the resulting adjoint $\partial^*$ of $\partial$ 
and then choose $h= \Delta^{-1}\partial^*$, where $\Delta= \partial\partial^*+ \partial^*\partial$.
\item  2: If $(E, D)$ is a representation up to homotopy whose underlying complex
$(E, \partial)$ is acyclic, then it is contractible, i.e. one can find $H\in \underline{\textrm{Hom}}^{-1}(E, E)$
such that $HD+ DH+ \textrm{Id}= 0$. To see this, consider $\delta_{D}= D- \partial$
acting on $C(G; E)$ and put
\[ H:= h(1+ (\delta_{D}h)+  (\delta_{D}h)^2+ \ldots ) : C(G; E)\rmap C(G; E) .\]
When applied to each element, the sum is finite hence it makes sense (it is just the inverse of
$(1- \delta_Dh)$). A formal computation shows that $H$ satisfies the required equation. The
fact that $h^2= 0$ implies that $H$ is $C(G)$-linear.
\end{itemize}
If $\Phi: E\rmap F$ is a quasi-isomorphism between representations up to homotopy, we first pass to the mapping
cone $C(\Phi)$ (see Example \ref{mapping cones}). Its underlying complex of vector bundles, $C(\Phi_0)$,
is acyclic because $\Phi_0$ is a quasi-isomorphism. From the first observation, it is contractible. From the second observation,
$C(\Phi)$ is contractible. Writing the resulting operator $H$ in a matrix form
\[ H=  \left(
\begin{array}{cc}
                h_1 & \bullet \\
                \Psi & h_2\,
        \end{array}
   \right) : C(G; E)\oplus C(G; F)\rmap C(G; E)\oplus C(G; F),\]
all the entries are $C(G)$-linear. Writing out the contracting homotopy equation for $H$, we find that $h_1$ provides
a homotopy between $\Psi\Phi$ and the identity, and similarly $h_2$ for $\Phi\Psi$. This shows that, at the level of the derived category,
$\Phi$ is invertible with inverse $\Phi$. 
\end{proof}

\begin{remark} As most of our results, the last proposition also holds if we allow unbounded
complexes (see Remark \ref{variations}). Note for instance that the sum above defining $H$ 
still makes
sense due to the fact that we use completions and all the operators are continuous.
\end{remark}

The next result shows that the differentiable cohomology functor descends to the derived category.
Given a representation $E\in \uRep^{\infty}(G)$, there is a decreasing filtration
on the complex $C(G; E)$:

\begin{equation}\label{filtration}
\cdots \subseteq L^2(C(G;E))\subseteq L^1(C(G;E))\subseteq
L^0(C(G;E))=C(G;E),
\end{equation}
where
\begin{equation*}
L^k(C(G;E))=C^k(G;E)\oplus C^{k+1}(G;E)\oplus C^{k+2}(G;E)\oplus
\cdots.
\end{equation*}
This filtration gives a spectral sequence $\mathcal{E}_r^{p,q}(E)$
converging to $\uHdiff^{p+q}(G;E)$.

\begin{proposition}
 If $\Phi: E\rmap E'$ is a quasi-isomorphism between two representations up to homotopy of $G$, then the map induced in cohomology
\begin{equation*}
\Phi:\uHdiff(G; E)\rightarrow \uHdiff(G; E'),
\end{equation*}
is an isomorphism.
 \end{proposition}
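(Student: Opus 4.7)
The plan is to exploit the filtration (\ref{filtration}) and the resulting spectral sequence $\mathcal{E}_r^{p,q}(E)$, and compare it with the corresponding spectral sequence for $E'$.

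First I would check convergence. Since $E$ is bounded (say $E^l = 0$ for $l<a$), in each fixed total degree $n$ the subspace $L^k(C(G;E))^n = \bigoplus_{p\ge k} C^p(G; E^{n-p})$ vanishes as soon as $p > n-a$, so the filtration is finite in each degree. Hence the spectral sequence converges to $\uHdiff^{p+q}(G;E)$, and similarly for $E'$.

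Next I would identify the $E_0$ and $E_1$ pages. Using the decomposition $D = \hat D_0 + L_{R_0} + L_{R_1} + L_{R_2} + \cdots$ from the proof of Proposition \ref{decomposition with respect to F}, note that among these summands only $L_{R_0}$ preserves the cocycle degree (all other pieces strictly raise it). Consequently the differential on $\mathcal{E}_0^{p,q}(E) = C^p(G;E^q)$ is (up to a sign) the pointwise boundary $\partial = R_0$, and
\begin{equation*}
\mathcal{E}_1^{p,q}(E) \;\cong\; C^p\bigl(G;\mathcal{H}^q(E,\partial)\bigr),
\end{equation*}
understood fiberwise (the cohomology ``bundle'' $\mathcal{H}^q$ need not be smooth, but we only need fiberwise information). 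A morphism $\Phi: E\rmap E'$ of representations up to homotopy is $C(G)$-linear, hence preserves the filtration, and on the $E_0$ page it acts fiberwise as $\Phi_0$. By hypothesis $\Phi_0$ induces an isomorphism on pointwise cohomology, so $\Phi$ induces an isomorphism $\mathcal{E}_1(E) \rmap \mathcal{E}_1(E')$.

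By the comparison theorem for convergent, bounded spectral sequences, the map $\Phi$ induces an isomorphism on $\mathcal{E}_\infty$, and therefore on $\uHdiff(G;-)$. The only delicate point is the non-smoothness of the cohomology bundle, which is why I keep the identification of $\mathcal{E}_1$ fiberwise and only use that $\Phi_0$ is a fiberwise quasi-isomorphism; everything else is the standard comparison argument. Alternatively one can bypass the spectral sequence entirely: by Proposition \ref{rem-derived}, $\Phi$ is a homotopy equivalence, and the relation $\Phi - \Psi = D_F H + H D_E$ (with $H$ being $C(G)$-linear of total degree $-1$) immediately forces homotopic morphisms to induce equal maps on $\uHdiff$; hence the homotopy inverse of $\Phi$ descends to an inverse of $\Phi_*$ in cohomology.
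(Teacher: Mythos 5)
Your proposal follows essentially the same route as the paper: the paper's proof is exactly the observation that $\Phi$ induces a map of the spectral sequences associated to the filtration (\ref{filtration}) which is an isomorphism on the $E_1$ page (because $\Phi_0$ is a fiberwise quasi-isomorphism), followed by the comparison theorem; your version just fills in the convergence and $E_1$-identification details that the paper leaves implicit. Your closing alternative via Proposition \ref{rem-derived} is also valid and arguably cleaner, but it is not the argument the paper uses here.
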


\begin{proof}
$\Phi$ induces a map of spectral sequences
$\Phi:({\cal{E}}_r^{p,q}(E),d_r^{p,q})\rightarrow
({\cal{E}}_r^{p,q}(E'),d_r^{p,q})$ which is an isomorphism for $r=1$.
The result follows from the comparison theorem of spectral sequences
\cite{Weibel}.
\end{proof}

The spectral sequence simplifies when the representations are
regular in the following sense:

\begin{definition} A complex of vector bundles $(E,\partial)$ is called 
regular if $\partial$ has constant rank. In this case we denote by 
${\cal{H}}^{\bullet}(E)$ the cohomology of $(E,\partial)$, which is a graded vector bundle
due to the constant rank condition.
\end{definition}

\begin{theorem}\label{spectral}\label{push a representation to the cohomology}
Let $E$ be a unital representation up to homotopy of a Lie groupoid
$G$ whose underlying complex is regular. Then each cohomology vector bundle
${\cal{H}}^{\bullet}(E)$ has the structure of ordinary representation of $G$ with the action
defined by
\begin{equation*}
\lambda_{g}(\overline{v}):=\overline{\lambda_{g}(v)},
\end{equation*}
and there is a spectral sequence:
\begin{equation*}
\mathcal{E}_2^{p,q}(E)\cong \uHdiff^p(G;{\cal{H}}^q(E))\Rightarrow
\uHdiff^{p+q}(G;E).
\end{equation*}
Moreover, the action on the ${\cal{H}}^{\bullet}(E)$'s  can be extended to a representation up to
homotopy structure in the cohomology complex ${\cal{H}}(E)$, with
zero differential, for which there is a quasi-isomorphism:
\begin{equation*}
\Phi:C(G; E)\rightarrow C(G;{\cal{H}}(E)).
\end{equation*}
\end{theorem}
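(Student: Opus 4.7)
The plan is to handle the three assertions in order. For the first, I would show that $\bar\lambda_g(\bar v):=\overline{\lambda_g(v)}$ is well-defined, smooth, and associative. The $k=1$ case of (\ref{structure equations}) gives $R_1(g)\partial=\partial R_1(g)$, so $R_1(g)$ is a chain map and descends to cohomology; the $k=2$ case,
\[ \lambda_g\lambda_h-\lambda_{gh}= \partial R_2(g,h)+R_2(g,h)\partial ,\]
gives strict associativity on cohomology; unitality provides $\bar\lambda_{1_x}=\mathrm{Id}$; and regularity ensures $\mathcal{H}^q(E)$ is a smooth bundle on which $\bar\lambda$ is automatically smooth.

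For the spectral sequence, I would compute its pages from the decomposition $D=\sum_{k\geq 0} R_k\star\cdot + \hat D_0$. Only $R_0\star\cdot=\partial$ preserves cocycle degree, so $\mathcal{E}_0^{p,q}=(C^p(G;E^q),\partial)$. By regularity $\mathcal{E}_1^{p,q}=C^p(G;\mathcal{H}^q(E))$, and the induced $d_1$---coming from the cocycle-degree-one-raising part $R_1\star\cdot+\hat D_0$---is precisely the standard differentiable-cohomology differential of $\mathcal{H}^q(E)$ for the action built above. Thus $\mathcal{E}_2^{p,q}\cong \uHdiff^p(G;\mathcal{H}^q(E))$. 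Convergence to $\uHdiff^{p+q}(G;E)$ is automatic because boundedness of $E$ makes the filtration finite in each total degree (for fixed $n$, only $p\in[n-b,n-a]$ contributes).

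For the last statement, I would invoke the homological perturbation lemma. Regularity lets us choose smooth complementary sub-bundles yielding a strong deformation retract of $(E,\partial)$ onto $(\mathcal{H}(E),0)$: smooth bundle maps $i:\mathcal{H}(E)\to E$, $p:E\to\mathcal{H}(E)$, and a degree $-1$ homotopy $h$ with $\mathrm{Id}_E-ip=\partial h+h\partial$, $pi=\mathrm{Id}$, $ph=hi=h^2=0$. Applied componentwise this gives an SDR of $(C(G;E),D_0)$ onto $(C(G;\mathcal{H}(E)),0)$. Writing $D=D_0+\delta$ with $\delta=\sum_{k\geq 1}R_k\star\cdot + \hat D_0$, the perturbation lemma then produces a square-zero degree-one operator
\[ D'= p\circ\delta\circ(\mathrm{Id}-h\delta)^{-1}\circ i\]
on $C(G;\mathcal{H}(E))$, together with the desired quasi-isomorphism $\Phi=p\circ(\mathrm{Id}-\delta h)^{-1}:C(G;E)\to C(G;\mathcal{H}(E))$. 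The geometric series is locally finite because $h\delta$ strictly raises cocycle degree while only finitely many $R_k$ are nonzero (by boundedness of $E$). Since $p, i, h$ are $C(G)$-linear and $\delta$ satisfies the Leibniz identity, so does $D'$; hence by Definition \ref{def-rep-up} it defines a representation up to homotopy on $\mathcal{H}(E)$ with zero underlying differential, whose bidegree $(1,0)$ piece is the action $pR_1 i$ from the first paragraph. The main obstacle will be carefully verifying that the transferred $D'$ is indeed a $C(G)$-derivation (rather than just a degree-one $C(G)$-linear operator), so that it qualifies as the structure operator of a representation up to homotopy in the precise sense of Definition \ref{def-rep-up}; this reduces to the $C(G)$-linearity of each of $p, i, h$ and the derivation property of $\delta$, both of which are inherited from the componentwise extension and from $D, D_0$ being derivations.
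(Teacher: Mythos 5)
Your proposal is correct and follows essentially the same route as the paper: part (1) from the $k=1,2$ structure equations plus unitality, part (2) from the cocycle-degree filtration with $\mathcal{E}_1^{p,q}\cong C^p(G;\mathcal{H}^q(E))$, and part (3) via the homological perturbation lemma. The only difference is one of exposition: the paper simply cites the infinitesimal case (\cite{AC1}, Section 3.3) and Kadeishvili for the perturbation argument, whereas you spell out the SDR data, the formulas for $D'$ and $\Phi$, and the (genuinely necessary) check that the side conditions $ph=hi=h^2=0$ together with the derivation property of $\delta$ make the transferred operator a $C(G)$-derivation.
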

\begin{proof}
The fact that the formula
$\lambda_{g}(\overline{v})=\overline{\lambda_{g}(v)}$ defines a
representation structure on ${\cal{H}}^q(E)$ follows from the structure
equations (\ref{structure equations}) and the fact that the representation is unital.
On the other hand, one easily shows that
$\mathcal{E}_1^{p,q}(E) \cong C^p(G,{\cal{H}}^q(E))$ and that the
differential $d_1^{p,q}$ is the one given by the representation
structure. The proof of the last statement uses homological
perturbation theory and can be copied word by word from the that of
the infinitesimal version, which can be found in Section $3.3$  of
\cite{AC1}. See also Kadeishvili's paper \cite{Kad}.
\end{proof}

%\begin{corollary}
%Any representation up to homotopy $E$ of $G$ whose underlying complex is acyclic
%has $\uHdiff(G,E)=0$.
%\end{corollary}

\begin{remark}\label{reasons-complexes} The last theorem shows that, at the level of the derived category,
the regular representations up to homotopy can be replaced by new ones with zero-differential.
The last ones correspond to graded ordinary representations together with 
certain structure cocycles $\{R_{k}: k\geq 2\}$ (true cocycles!). Hence one may think that
the passing from ordinary representations to $\uRRep^{\infty}(G)$ is about:
\begin{itemize}
\item allowing graded objects
together with such cocycles. 
\end{itemize}
However, we would like to point out that this is only one of the
features of $\uRRep^{\infty}(G)$; the most important one is different: 
\begin{itemize}
\item allowing ``non-regular representations''.
\end{itemize}
Morally, these hide ``singular representations'' (in the sense that the associated cohomology bundles
are no longer vector bundles). Probably the adjoint representation is the best examples in which both features
can be seen: it is non-regular in general and, even when it is regular, the 2-cocycles involved (the basic curvature) may
be non-trivial. The infinitesimal version of this remarks appears in \cite{AC1} (see e.g. Example 2.6 therein)
and shows that the full structure of representations up to homotopy is needed in order to make sense of the adjoint 
representation). 
\end{remark}

\subsection{The vanishing theorem}
\label{The vanishing theorem}

One of the main properties of differentiable cohomology with coefficients in ordinary representations
is the fact that, for proper groupoids (the generalization of compactness when going from groups to groupoids),
it vanishes in all positive degrees. This result turns out to be very useful when dealing with deformations
or rigidity phenomena (see e.g. \cite{regular, linear} for such applications). With such applications in mind, one can try
to generalize this result to general representations up to homotopy. This is interesting also from the point of
view of Bott's spectral sequence: it is precisely this kind of
phenomenon that allows one to obtain the Cartan model from that of
Getzler.

Recall first that a Lie groupoid $G$ over $M$ is called proper if the map
\begin{equation*}
(s,t):G\rightarrow M\times M
\end{equation*}
is proper. The vanishing theorem for ordinary representations \cite{Cra2} says that,
if $E$ is an ordinary representation of a proper groupoid, then $\uHdiff^q(G,E)=0$
for all $q\neq 0$. To get an idea of what vanishing result to expect for representations up
to homotopy, we first point out what happens in the regular case. In this case, one can apply
directly the vanishing for ordinary representations (combined with the spectral sequence of the 
previous subsection).

\begin{corollary}\label{regular proper}
If $E=\bigoplus_{l=a}^b E^l$ is a regular unital representation up
to homotopy of a proper Lie groupoid $G$, then 
\begin{equation*}
\uHdiff^q(G;E)\cong \Gamma({\cal{H}}^q(E))_{inv}.
\end{equation*}
In particular, $\uHdiff^q(G;E)=0$ if $q\notin [a,b]$.
\end{corollary}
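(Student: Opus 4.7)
The plan is to combine the spectral sequence of Theorem \ref{spectral} (the ``push to cohomology'' spectral sequence for regular unital representations up to homotopy) with the classical vanishing theorem for differentiable cohomology of proper groupoids with coefficients in ordinary representations. Since $E$ is regular and unital, the cohomology bundles $\mathcal{H}^q(E)$ are honest vector bundles carrying an ordinary representation of $G$ (given by $\lambda_g(\overline v):= \overline{\lambda_g(v)}$), and there is a convergent spectral sequence
\begin{equation*}
\mathcal{E}_2^{p,q}(E) \cong \uHdiff^p(G; \mathcal{H}^q(E)) \;\Longrightarrow\; \uHdiff^{p+q}(G; E).
\end{equation*}

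Next I would invoke the vanishing theorem of \cite{Cra2} for ordinary representations of proper Lie groupoids: for each $q$, the ordinary representation $\mathcal{H}^q(E)$ satisfies $\uHdiff^p(G; \mathcal{H}^q(E)) = 0$ whenever $p > 0$, while in degree zero one has $\uHdiff^0(G; \mathcal{H}^q(E)) = \Gamma(\mathcal{H}^q(E))_{inv}$. Thus the $\mathcal{E}_2$-page of the spectral sequence is concentrated in the column $p=0$; in particular every differential on $\mathcal{E}_r$ (for $r \ge 2$) has zero target or zero source, so the spectral sequence collapses at $\mathcal{E}_2$ and
\begin{equation*}
\uHdiff^q(G; E) \;\cong\; \mathcal{E}_2^{0,q}(E) \;\cong\; \Gamma(\mathcal{H}^q(E))_{inv}.
\end{equation*}

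For the ``in particular'' clause, the boundedness hypothesis $E=\bigoplus_{l=a}^{b} E^l$ means that the underlying complex is concentrated in degrees $[a,b]$, so $\mathcal{H}^q(E) = 0$ whenever $q \notin [a,b]$, and the first part immediately yields $\uHdiff^q(G;E)=0$ there.

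There is no real obstacle: the only things one must be slightly careful about are that Theorem \ref{spectral} genuinely requires the regularity and unitality hypotheses (to get a well-defined vector bundle with an ordinary action), and that the filtration (\ref{filtration}) used to construct the spectral sequence is bounded in the relevant range so that convergence is unproblematic. Both are already given in the setup, so the corollary reduces to assembling these two ingredients.
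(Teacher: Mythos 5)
Your proposal is correct and follows essentially the same route as the paper: both combine the spectral sequence of Theorem \ref{spectral} (giving $\mathcal{E}_2^{p,q}\cong \uHdiff^p(G;{\cal{H}}^q(E))$) with the vanishing theorem of \cite{Cra2} for ordinary representations of proper groupoids, concluding degeneration at the second page. Your added remarks on convergence of the filtration and on the vanishing of ${\cal{H}}^q(E)$ outside $[a,b]$ are correct details that the paper leaves implicit.
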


\begin{proof}
Since $E$ is regular, we know that each ${\cal{H}}^q(E)$ is an ordinary representation of $G$ and 
the spectral sequence associated to $E$ has $\mathcal{E}_2^{p,q}\cong \uHdiff^p(G;{\cal{H}}^q(E))$. Since
the groupoid is proper, Proposition $1$ of \cite{Cra2} implies that
$\uHdiff(G;{\cal{H}}^q(E))\cong
\uHdiff^0(G;{\cal{H}}^q(E))=\Gamma({\cal{H}}^q(E))_{inv}$. We conclude
that the spectral sequence degenerates at the second page.
\end{proof}

The aim of this section is to prove a similar vanishing result for all representations up to homotopy.

\begin{theorem}\label{vanishing}
Let $E=\bigoplus_{l=a}^b E^l$ be a unital representation up to
homotopy of a proper Lie groupoid $G$. Then:
\begin{equation*}
\uHdiff^q(G;E) =0, \text{         if } q \notin [a,b].
\end{equation*}
\end{theorem}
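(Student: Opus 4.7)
The case $q<a$ is vacuous, since every cochain in $C(G;E)^q=\bigoplus_{k+l=q}C^k(G;E^l)$ has $k\geq 0$ and $l\geq a$, hence $q\geq a$. All of the work therefore lies in the case $q>b$, where the constraint $k=q-l\geq q-b>0$ automatically places every component of $\eta\in C(G;E)^q$ in strictly positive cocycle degree. The plan is to build a contracting homotopy on this positive-cocycle-degree part of $(C(G;E),D)$ by averaging against a proper Haar system and then perturbing against the higher structure operators $R_i$ ($i\geq 2$). Boundedness of $E$ in $[a,b]$ will make this perturbation terminate.

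Using properness I would fix a Haar system $\mu=\{\mu^x\}_{x\in M}$ on the $t$-fibers and a cutoff $c\in C^{\infty}(M)$ with $\int c(s(g))\,d\mu^x(g)=1$. For $k\geq 1$ define
\[
(h\eta)(g_1,\ldots,g_{k-1})=\int c(s(g))\,\eta(g_1,\ldots,g_{k-1},g)\,d\mu^{s(g_{k-1})}(g),
\]
and put $\tilde h:=-(-1)^l h$ on $C^{\bullet}(G;E^l)$. The sign is chosen precisely so that $D_0\tilde h+\tilde h D_0=0$, because $D_0=L_{\partial}$ is pointwise on $E$ and $\partial$ commutes with integration. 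A calculation exactly parallel to the one for ordinary representations in \cite{Cra2} then shows
\[
D_1\tilde h+\tilde h D_1=\mathrm{Id}\qquad\text{on }C^k(G;E^l)\text{ for }k\geq 1,
\]
where $D_1=L_{\lambda}+\hat D_0$ is the quasi-action part of $D$. Here the $\lambda_{g_1}$-contributions and the middle face-merge contributions cancel between $D_1h$ and $hD_1$, while left-invariance of $\mu$ identifies the remaining terminal face-merge with $h\eta$ via the change of variable $g'=g_k g$. Crucially the argument does not invoke $\lambda^{-1}$, so it adapts verbatim to quasi-actions.

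Writing $D=D_0+D_1+D_{\geq 2}$, these two identities give
\[
D\tilde h+\tilde h D=\mathrm{Id}+\varepsilon,\qquad \varepsilon:=D_{\geq 2}\tilde h+\tilde h D_{\geq 2}.
\]
Each $D_i$ with $i\geq 2$ has bidegree $(i,1-i)$, so $\varepsilon$ strictly lowers the internal degree $l$; boundedness of $E$ in $[a,b]$ therefore forces $\varepsilon^{b-a+1}=0$. Graded Jacobi applied to $D^2=0$ gives $D\varepsilon=\varepsilon D$, so $D$ commutes with the finite Neumann series $(\mathrm{Id}+\varepsilon)^{-1}=\sum_{n=0}^{b-a}(-\varepsilon)^n$. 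Consequently
\[
H:=\tilde h\,(\mathrm{Id}+\varepsilon)^{-1}
\]
satisfies $DH+HD=\mathrm{Id}$ throughout positive cocycle degree. Any cocycle $\eta\in C(G;E)^q$ with $q>b$ therefore equals $DH\eta$, proving $\uHdiff^q(G;E)=0$.

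The step I expect to demand the most care is the identity $D_1\tilde h+\tilde h D_1=\mathrm{Id}$: the signs appearing in $\hat D_{\lambda}$, in the definition of $\tilde h$, and in the left-translation change of variable must all align, and one must also confirm that the face-merges truly cancel in pairs between $D_1h$ and $hD_1$. Unitality of $E$ is probably only needed to restrict, if one wishes, the whole computation to the normalized subcomplex where $\hat D_0$ interacts cleanly with degeneracies; the rest of the argument is purely formal and relies only on the bidegree structure of $D$ together with the boundedness of $E$.
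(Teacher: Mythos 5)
Your argument is correct, but it takes a genuinely different route from the paper's. The paper filters $C(G;E)$ by cocycle degree and works on the first page of the resulting spectral sequence, where the only surviving differential is the one induced by the quasi-action; it contracts that page (for $p>0$) with the operator $\kappa(\eta)(g_1,\dots,g_p)=\int \lambda_g\,\eta(g^{-1},g_1,\dots,g_p)\,c(s(g))\,d\mu^{x}(g)$, which averages over a new \emph{first} argument and therefore must apply $\lambda_g$ inside the integral. The price is that the failure of $\lambda$ to be multiplicative produces correction terms of the form $\partial R_2+R_2\partial$, which can be discarded only because the computation takes place modulo $D_0$-exact cochains, and (weak) unitality is needed to identify $\lambda_{gg^{-1}}$ with the identity. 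You instead average over the \emph{last} argument, so $\lambda$ never enters the integral: the terminal face is handled purely by left-invariance of the Haar system, and the identity $D_1\tilde h+\tilde h D_1=\mathrm{Id}$ holds on the nose in positive cocycle degree (your signs do check out against the convention $\hat D_\lambda=(-1)^{k+l}\{\cdots\}$: one finds $hD_1+D_1h=-(-1)^l\,\mathrm{Id}$, which is exactly what the twist $\tilde h=-(-1)^l h$ corrects, while $D_0=L_\partial$ carries no sign and genuinely commutes with $h$, so the anticommutator $D_0\tilde h+\tilde h D_0$ vanishes). The higher operators $D_{\geq 2}$ are then absorbed by the finite Neumann series, which is legitimate since $\varepsilon$ strictly lowers the bundle degree and $E$ is bounded, and the verification $D\varepsilon=\varepsilon D$ is immediate from $D^2=0$. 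What each approach buys: the paper's is shorter once the filtration spectral sequence is set up and reuses the familiar averaging operator from the theory of ordinary representations of proper groupoids; yours produces an explicit contracting homotopy of the entire positive-cocycle-degree subcomplex (a strictly stronger conclusion), avoids any convergence discussion, and, as you suspect, never actually uses unitality --- that hypothesis enters the paper's proof only because of the $g^{-1}$ appearing in $\kappa$.
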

\begin{proof}
Clearly, we can assume that $a=0$ and we only need to prove
that the cohomology vanishes above degree $b$. It suffices to show
that, in the spectral sequence induced by the filtration (\ref{filtration}),
the term $\mathcal{E}_2^{p,q}$ is zero if either $q>b$ or $p>0$.
In case $q>b$ already the term $\mathcal{E}_0^{p,q}=0$, therefore it only remains to
show the second statement.
Let us first consider the terms $\mathcal{E}_1^{p,q}$, which are the quotient
spaces $Z^{p,q}/B^{p,q}$ where:
\begin{equation*}
Z^{p,q}:=\left\{\eta \in C^p(G;E^q):D_0(\eta)=0 \right\},
\end{equation*}
and
\begin{equation*}
B^{p,q}:=\left\{\eta \in C^p(G;E^q):\eta=D_0(\gamma) \text{ for some } \gamma \in C^p(G;E^{q-1}) \right\}.
\end{equation*}
The operator
\begin{equation*}
d_1^{p,q}:\mathcal{E}_1^{p,q}\rightarrow \mathcal{E}_1^{p+1,q}
\end{equation*}
is given by:
\begin{equation*}
d_1^{p,q}(\overline{\eta})=\overline{D_1(\eta)}.
\end{equation*}
In order to prove that the cohomology of $d_1^{p,q}$ vanishes for positive values of $p$ we will
need to choose a Haar system on $G$. Namely, a family $\mu=\{\mu^x:x \in M\}$ of smooth
measures $\mu^x$ supported on the manifold $t^{-1}(x)$, such that:
\begin{enumerate}
\item For any $f \in C_c^{\infty}(G)$, the formula
\begin{equation*}
I_{\mu}(f)(x)=\int_{t^{-1}(x)}f(g)d\mu^x(g)
\end{equation*}
defines a smooth function $I_{\mu}(f)$ on $M$.
\item The family $\mu$ is left invariant. Explicitly, given an arrow $G\ni g:x \to y$ and a function
$f \in C_c^{\infty}(t^{-1}(y))$, one has
\begin{equation*}
\int_{t^{-1}(x)}f(gh)d\mu^x(h)=\int_{t^{-1}(y)}f(h)d\mu^y(h).
\end{equation*}
\end{enumerate}
It is well-known that Lie groupoids always admit Haar systems, see e.g. \cite{Tu}.
Now, the properness assumption implies the existence of a \textit{cut-off} function $c\in C^{\infty}(M)$ for the Haar system $\mu$. This is a smooth function on $M$ satisfying:
\begin{enumerate}
\item $t:supp(c \circ s)\rightarrow M$ is a proper map.
\item $\int_{t^{-1}(x)}c(s(g))d\mu^x(g)=1.$
\end{enumerate}
The construction of \textit{cut-off} functions on proper groupoids can be found in the appendix of \cite{Tu}.
Next, we use the Haar system and the function $c$ to define the operator:
\begin{equation*}
\kappa: C^{p+1}(G;E^q) \rightarrow
C^{p}(G; E^q)
\end{equation*}
given by:
\begin{equation*}
\kappa(\eta)(g_1,\dots,g_p)=\int_{t^{-1}(x)}\lambda_g \eta(g^{-1},g_1,\dots,g_p)c(s(g))d\mu^x(g),
\end{equation*}
where $x=t(g_1)$ and $\lambda$ is the quasi-action associated to $E$. For $p>0$, take an element $\overline{\eta} \in \mathcal{E}_1^{p,q}$ such that $d_1^{p,q}(\overline{\eta})=0$. We claim
that $\overline{\eta}=d_1^{p-1,q}\overline{\kappa(\eta)}$ and therefore $\mathcal{E}_2^{p,q}=0$.
In the following computation we will omit the super-index in $d_1^{p,q}$ and also the over-lining
when denoting an class in $\mathcal{E}_1^{p,q}$. However, it is important that the computation takes
place in these quotient spaces.
\begin{eqnarray*}
\kappa d_1 (\eta)(g_1,\dots,g_p)&=&\int_{t^{-1}(x)}\lambda_g \left\{ \lambda_{g^{-1}}\eta(g_1,\dots,g_p) - \eta(g^{-1}g_1,\dots,g_p) \right. \\
&&-\sum_{i=1}^{p-1}(-1)^i\eta(g^{-1},g_1,\dots,g_ig_{i+1},\dots,g_p)\\
&&+(-1)^{p+1}\eta(g^{-1},g_1,\dots,g_{p-1})\left. \right\}c(s(g))d\mu^x(g).
\end{eqnarray*}
Using  the associativity equation (\ref{structure equations}) for representation up to homotopy and
the fact that $E$ is a unital representation up to homotopy, we conclude that the expression above equals:
\begin{eqnarray*}
&=&\int_{t^{-1}(x)}\eta(g_1,\dots,g_p)c(s(g))d\mu^x(g)
+\int_{t^{-1}(x)}d_ER_2(g,g^{-1})\eta(g_1,\dots,g_p)c(s(g))d\mu^x(g)\\
&&+\int_{t^{-1}(x)}R_2(g,g^{-1})d_E\eta(g_1,\dots,g_p)c(s(g))d\mu^x(g)
+ \int_{t^{-1}(x)}\lambda_g \left\{ - \eta(g^{-1}g_1,\dots,g_p) \right. \\
&&+\sum_{i=1}^{p-1}(-1)^{i+1}\eta(g^{-1},g_1,\dots,g_ig_{i+1},\dots,g_p)\\
&&+(-1)^{p+1}\eta(g^{-1},g_1,\dots,g_{p-1})\left. \right\}c(s(g))d\mu^x(g).
\end{eqnarray*}
Since the computation takes place in $\mathcal{E}_1^{p,q}$, the second and
third terms are zero. Also, by the normalization condition $(2)$ on the function
$c$, the first term equals $\eta(g_1,\dots,g_p)$. Using the invariance of the Haar
system, the expression becomes:
 \begin{eqnarray*}
&=&\eta(g_1,\dots,g_p)- \int_{t^{-1}(y)}\lambda_{g_1h}\eta(h^{-1},g_2,\dots,g_p) c(s(h))d\mu^y(h) \\
&&+\int_{t^{-1}(x)}  \sum_{i=1}^{p-1}(-1)^{i+1}\eta(g^{-1},g_1,\dots,g_ig_{i+1},\dots,g_p) c(s(g))d\mu^x(g)\\
&&+(-1)^{p+1}\int_{t^{-1}(x)}\eta(g^{-1},g_1,\dots,g_{p-1})c(s(g))d\mu^x(g).
\end{eqnarray*}
Once again, we use the associativity equation (\ref{structure equations}) for representations
up to homotopy and obtain:
\begin{eqnarray*}
&=&\eta(g_1,\dots,g_p)- \int_{t^{-1}(y)}\lambda_{g_1}\circ \lambda_{h}\eta(h^{-1},g_2,\dots,g_p) c(s(h))d\mu^y(h) \\
&&+ \int_{t^{-1}(y)}\left((R_2(g_1,h)d_E+d_ER_2(g_1,h)\right)\eta(h^{-1},g_2,\dots,g_p) c(s(h))d\mu^y(h) \\
&&+\int_{t^{-1}(x)}  \sum_{i=1}^{p-1}(-1)^{i+1}\eta(g^{-1},g_1,\dots,g_ig_{i+1},\dots,g_p) c(s(g))d\mu^x(g)\\
&&+(-1)^{p+1}\int_{t^{-1}(x)}\eta(g^{-1},g_1,\dots,g_{p-1})c(s(g))d\mu^x(g).
\end{eqnarray*}
The second line is zero in $\mathcal{E}_1^{p,q}$, and the sum above becomes:
\begin{equation*}
=\eta(g_1,\dots,g_p)-d_1 \kappa (\eta)(g_1,\dots,g_p).
\end{equation*}
Since the whole expression equals $\kappa d_1(\eta)(g_1,\dots,g_p)=0$, we conclude that $d_1 \kappa (\eta)=\eta$, and the proof is complete.
\end{proof}

\section{The formula of Bott for general Lie groupoids}\label{proof of the formula}

\subsection{The statements}

The aim of this section is to prove a generalization of the formula of Bott and the resulting
spectral sequence from Lie groups to Lie groupoids. To be able to talk about the symmetric
powers of the coadjoint representation, as explained in Example \ref{axiomat},
we will use the axiomatic approach to the symmetric powers (Definition \ref{axiomati}).
So, let $\uS^q$ be a symmetric power operation on representations up to homotopy.

\begin{theorem}\label{main theorem}
Let $G$ be a Lie groupoid. Then, for the cohomology of the horizontal complex
\begin{equation*}
\xymatrix{
\Omega^q(G_0)\ar[r]^-{d_h}&\Omega^q(G_1)\ar[r]^-{d_h}&\cdots
\ar[r]^-{d_h}& \Omega^q(G_p)\ar[r]^-{d_h}&\cdots, }
\end{equation*}
one has
\begin{equation*}
H_{d_h}^p(\Omega^q(G_{\bullet}))\cong
H_{\mathrm{diff}}^{p-q}(G;\uS^q\uAd^*).
\end{equation*}
\end{theorem}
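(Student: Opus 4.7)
The plan is to pick an Ehresmann connection on $G$ and set up a filtered spectral sequence on $\Omega^q(G_\bullet)$ whose $E_1$-page realises the degree $p-q$ part of the total complex of $\uS^q\uAd^*$ as a complex of vector bundles, and whose higher differentials realise its structure operators.

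I would begin by fixing a connection $\sigma$ on $G$, as in Section~2.4, which makes $\uAd_\sigma$ -- and hence, through axioms (S1)--(S4), a representative of $\uS^q\uAd^*$ -- explicit in terms of $\rho$, the quasi-action $\lambda$ and the basic curvature $K^{\mathrm{bas}}_\sigma$. The connection provides the splitting $TG\cong s^*TM\oplus t^*A$ and, recursively along composable strings, an isomorphism
\[
TG_p\;\cong\; t^*TM\oplus \bigoplus_{i=1}^{p}t_i^*A,\qquad t_i(g_1,\ldots,g_p):=t(g_i)=s(g_{i-1}).
\]
Dualizing and taking exterior powers produces a canonical \emph{type decomposition}
\[
\Omega^q(G_p)\;=\;\bigoplus_{n_0+n_1+\cdots+n_p=q}\Gamma\!\bigl(G_p,\;t^*\Lambda^{n_0}T^*M\otimes \bigotimes_{i=1}^{p}t_i^*\Lambda^{n_i}A^*\bigr).
\]
Regrouping by the \emph{$A$-weight} $k:=n_1+\cdots+n_p=q-n_0$ defines the subspaces $W^{p,k}\subset \Omega^q(G_p)$ of $A$-weight $k$, and hence a decreasing filtration $F^m\Omega^q(G_\bullet):=\bigoplus_{k\ge m}W^{\bullet,k}$.

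Next, I would analyse the horizontal differential $d_h=\sum_j(-1)^j d_j^*$ in this filtration. Using the formulas of Proposition~\ref{adjoint-basic-formulas} for how the differentials $d(d_j)$ fail to respect the splitting, a direct computation shows that $d_h$ preserves or \emph{increases} the $A$-weight: the curvature corrections involving $\rho$, $\lambda$ and $K^{\mathrm{bas}}_\sigma$ can only convert $T^*M$-slots into $A^*$-slots. Hence $F^\bullet$ is compatible with $d_h$ and yields a convergent spectral sequence. The $E_0$-differential is the $A$-weight-preserving part of $d_h$; on each fixed-$k$ row $W^{\bullet,k}$ it acts purely by the simplicial boundary on the distributed vertical $A^*$-factors. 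The heart of the argument is the identification of the $E_1$-page by the combinatorial claim
\[
H^p\bigl(W^{\bullet,k},\,d_0\bigr)\;\cong\; C^{p-q+k}\bigl(G;\;\Lambda^{q-k}T^*M\otimes S^{k}A^*\bigr),
\]
which is the groupoid/simplicial version of the Koszul identity asserting that the simplicial object $p\mapsto \Lambda^k(V^{\oplus p})$ has cohomology concentrated in simplicial degree $k$ and equal to $S^k V$, applied fiberwise to $A^*$ with the induced degree shift $p\mapsto p-q+k$. Summing over $k$ recovers the degree $p-q$ piece of $C^\bullet(G;\uS^q\uAd^*)$ by axiom (S1), and the higher differentials $d_r$ match, by functoriality together with axioms (S2)--(S4), the structure operators $R_0^*=-\rho^*$, $R_1^*=\lambda^*$ and $R_2^*=(K^{\mathrm{bas}}_\sigma)^*$ of the coadjoint representation up to homotopy extended to $\uS^q\uAd^*$.

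The hard part will be the combinatorial identification of the $E_1$-page: proving the simplicial Koszul statement above, with the explicit degree shift and correct signs, and then checking that the curvature corrections picked up in $d_1,d_2,\ldots$ match the structure operators pinned down on $\uS^q\uAd^*$ by the axioms. Once these are in place, convergence of the filtered spectral sequence yields $H^p_{d_h}(\Omega^q(G_\bullet))\cong H^{p-q}_{\mathrm{diff}}(G;\uS^q\uAd^*)$, which is the statement.
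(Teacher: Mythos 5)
Your outline founders on two points, one computational and one structural. Computationally: the basic curvature is a bundle map $K^{\textrm{bas}}_{\sigma}(g,h):T M\to A$, so under dualization and pullback by the middle face maps $d_j$ it converts $A^*$-slots into $T^*M$-slots, i.e.\ it \emph{decreases} the $A$-weight $k$ --- the opposite of what you assert. (Concretely: the $A$-component of $dm(X_j,X_{j+1})$ in the connection coordinates contains $K^{\textrm{bas}}_{\sigma}(g_j,g_{j+1})(v)$, where $v$ is the base $TM$-coordinate; dually, a form pairing with an $A$-slot acquires a component pairing with the base.) Hence $F^m=\bigoplus_{k\ge m}W^{\bullet,k}$ is not a subcomplex of $(\Omega^q(G_\bullet),d_h)$ and the spectral sequence you describe does not exist. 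The filtration that is preserved is the mixed one by $p-k$ (the analogue of the filtration (\ref{filtration}) by cocycle degree), and correspondingly your $E_1$-identification has a degree error: matching total degrees in $C(G;\uS^q\uAd^*)^{p-q}$ forces $C^{p-k}(G;\Lambda^{q-k}T^*M\otimes S^kA^*)$, not $C^{p-q+k}$ (with your indexing the terms would not even assemble to a fixed total degree).

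Structurally, and more seriously: the axioms (S1)--(S4) pin down only the underlying complex and the quasi-action of $\uS^q\uAd^*$, i.e.\ only $R_0$ and $R_1$. The operators $R_k$ for $k\ge 2$ of $\uS^q\uAd^*$ are simply not determined by the axioms, so the step ``the higher differentials $d_r$ match the structure operators'' cannot be carried out; and even if they were determined, an isomorphism of $E_1$-pages together with an informal matching of differentials does not yield an isomorphism of abutments --- one needs an actual morphism of filtered complexes. The paper avoids both problems by a completely different route: it resolves $M$ by the free $G$-spaces $P^{(m)}=G_{m+1}$ and forms a double complex whose columns are the complexes $C(G^{(m)};\uS^q\uAd^*)$ and whose rows are \v{C}ech-type complexes. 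Column exactness (Proposition \ref{banal case}) follows because $\uAd$ is \emph{regular} over a free $G$-space, so Theorem \ref{push a representation to the cohomology} and the vanishing theorem reduce the cohomology to $\Omega^q(G_m)$; row exactness comes from the simplicial contraction $\sigma_0$. This argument touches $\uS^q$ only through its value on objects, the diagonal quasi-action, and strict morphisms --- precisely the content of (S1)--(S4) --- and never needs to know the higher structure operators. If you want to salvage your approach, you would in effect have to construct a specific model of $\uS^q\uAd^*$ out of the connection-induced decomposition of $\Omega^q(G_\bullet)$ and verify the axioms for it, which is the harder problem deferred to \cite{ACD}.
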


From the vanishing result of Theorem \ref{vanishing}, we deduce:

\begin{corollary}\label{vanishingBS}
Let $G$ be a proper Lie groupoid. Then
\begin{equation*}
H_{d_h}^p(\Omega^q(G_{\bullet}))=0, \text{ if } p>q.
\end{equation*}
\end{corollary}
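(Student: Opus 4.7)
The plan is to combine the two previously established theorems as follows. By Theorem \ref{main theorem}, we have an isomorphism
\[
H_{d_h}^p(\Omega^q(G_{\bullet}))\cong H_{\mathrm{diff}}^{p-q}(G;\uS^q\uAd^*),
\]
so the entire task reduces to showing that, for a proper groupoid and for $p>q$, the right-hand side vanishes. For this we plan to invoke Theorem \ref{vanishing}, applied to the representation up to homotopy $E:=\uS^q\uAd^*$, and the only thing we really need to compute is the range of nonzero degrees of $E$.

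First I would identify this range. The adjoint complex $\uAd(G)=(A\stackrel{\rho}{\rightarrow}TM)$ is concentrated in degrees $0$ and $1$ by definition, so by the sign convention of Example \ref{dual representation} its dual $\uAd^*$ has $TM^*$ in degree $-1$ and $A^*$ in degree $0$. Axiom (S1) then says that the complex underlying $\uS^q\uAd^*$ is the ordinary symmetric power $S^q(\uAd^*)$, which as a graded vector bundle is concentrated in degrees $-q,-q+1,\dots,0$. Thus we may take $a=-q$ and $b=0$ when applying Theorem \ref{vanishing}.

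Next I would verify the unitality hypothesis of Theorem \ref{vanishing} for $E=\uS^q\uAd^*$. Proposition \ref{thm adjoint representation} gives unitality for $\uAd_\sigma(G)$, and an inspection of the formula $R_k^*(g_1,\dots,g_k)=(-1)^{k+1}(R_k(g_k^{-1},\dots,g_1^{-1}))^*$ in Example \ref{dual representation} shows that dualization preserves unitality (units invert to units, and the identity dualizes to the identity). Unitality of $\uS^q\uAd^*$ then follows from axioms (S1) and (S2): the underlying complex and the quasi-action $R_1$ are the ordinary symmetric powers of those of $\uAd^*$, so $R_1(1_x)=\mathrm{Id}$ on $S^q\uAd^*_x$, and the higher structure operators are inherited compatibly.

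Finally I would put everything together: by Theorem \ref{vanishing} applied to the proper groupoid $G$ and the unital representation up to homotopy $\uS^q\uAd^*$ concentrated in degrees $[-q,0]$, we have
\[
H_{\mathrm{diff}}^{p-q}(G;\uS^q\uAd^*)=0\qquad\text{whenever } p-q\notin[-q,0],
\]
and in particular whenever $p>q$. Combined with Theorem \ref{main theorem} this gives the claim. The main obstacle here is the bookkeeping around degrees and unitality: degree shifts under dualization and symmetrization must be tracked carefully, and one must be confident that the axiomatic symmetric power operation $\uS^q$ preserves unitality, which is not stated explicitly in axioms (S1)--(S4) but follows from their content together with how units are handled.
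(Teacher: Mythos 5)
Your proof is correct and is exactly the paper's argument: the corollary is deduced immediately by combining Theorem \ref{main theorem} with Theorem \ref{vanishing} applied to $\uS^q\uAd^*$, which is concentrated in degrees $[-q,0]$, so that $H^{p-q}_{\mathrm{diff}}(G;\uS^q\uAd^*)=0$ whenever $p-q>0$. Your extra care about unitality being preserved under dualization and under the axiomatic $\uS^q$ is a point the paper passes over silently, and your bookkeeping of the degree range matches what the paper uses in the proof of Proposition \ref{banal case}.
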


Also, from the spectral sequence associated to the Bott-Shulman complex
discussed in the preliminaries, we immediately deduce:

\begin{theorem}\label{theorem spectral sequence}
Let $G$ be a Lie groupoid. There is a spectral sequence converging
to the cohomology of $BG$:
\begin{equation*}
E_1^{pq}=H_{\mathrm{diff}}^{p-q}(G; \uS^q(\uAd^*))\Rightarrow
H^{p+q}(BG).
\end{equation*}
Moreover, if the groupoid is proper, the $E_1$  terms of the spectral sequence
vanish bellow the diagonal.
\end{theorem}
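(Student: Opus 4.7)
The plan is to start from the Bott--Shulman--Stasheff double complex $\Omega^{q}(G_p)$, whose total cohomology computes $H^{\bullet}(BG)$ by the Dupont--Bott--Shulman--Stasheff theorem recalled in the preliminaries, and run the spectral sequence attached to the filtration by differential form degree. Concretely, I would set
\begin{equation*}
F^{q}\,\uTot(\Omega(G_{\bullet}))^{n} \;=\; \bigoplus_{q'\geq q,\ p+q'=n} \Omega^{q'}(G_{p}),
\end{equation*}
which is a decreasing filtration of the total complex that is preserved by both $d_h$ and $\derham$. For fixed total degree $n$ one has $F^{0}= \uTot^{n}$ and $F^{n+1}=0$, so the filtration is bounded; this is what I would use to secure strong convergence to $H^{p+q}(BG)$.

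Next I would identify the low pages. Because the vertical differential $\derham$ strictly increases the filtration degree, the $E_{0}$-page is just the horizontal complex with the horizontal differential:
\begin{equation*}
E_{0}^{p,q} \;=\; \Omega^{q}(G_{p}), \qquad d_{0} \;=\; \udh.
\end{equation*}
Taking cohomology and invoking Theorem \ref{main theorem} gives the identification
\begin{equation*}
E_{1}^{p,q} \;=\; H_{\udh}^{p}(\Omega^{q}(G_{\bullet})) \;\cong\; \uHdiff^{p-q}(G;\uS^{q}\uAd^{*}),
\end{equation*}
which is exactly the stated form of the $E_1$-term. At this point the convergence assertion $E_{1}^{p,q}\Rightarrow H^{p+q}(BG)$ follows from the general machinery of bounded filtered complexes, so no further work is needed.

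Finally, for the vanishing statement, I would apply Corollary \ref{vanishingBS} directly to the right-hand side: when $G$ is proper, $H_{\udh}^{p}(\Omega^{q}(G_{\bullet}))=0$ whenever $p>q$, so $E_{1}^{p,q}=0$ below the diagonal, as claimed. The only genuinely nontrivial input is Theorem \ref{main theorem} (and, for the proper case, the vanishing Theorem \ref{vanishing} behind the corollary); everything else is bookkeeping with the filtration. Thus I do not expect a real obstacle beyond being careful with the indexing conventions (which page carries the differential-form degree and in which direction the filtration decreases) and checking that the filtration is bounded in each total degree, so that the spectral sequence converges in the usual sense to $H^{\bullet}(BG)$.
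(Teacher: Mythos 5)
Your proposal is correct and follows exactly the route the paper takes: the theorem is deduced immediately from the filtration of the Bott--Shulman--Stasheff double complex by differential form degree, with Theorem \ref{main theorem} identifying the $E_1$-page and Corollary \ref{vanishingBS} giving the vanishing below the diagonal in the proper case. Your extra remarks on boundedness of the filtration and the indexing are just the bookkeeping the paper leaves implicit.
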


\subsection{The proofs}

We start by discussing connections on action groupoids. More precisely, we fix a Lie groupoid $G$ over $M$
and let $A$ be its algebroid. We also fix an Ehresmann connection $\sigma$ on $G$ and we consider the
explicit realization $\textrm{Ad}_{\sigma}$ for the adjoint representation. To simplify notations, we
will omit the subscript $\sigma$.

We consider the category $\textrm{Man}_G$ of all $G$-spaces, i.e. manifolds $P$ together with a
submersion $\nu$ onto $M$ and an action of $G$ on $P \stackrel{\nu}{\rightarrow} M$. For any such $P$, 
we have the induced action groupoid and algebroid (see the preliminaries), denoted
\[ G^P:= G\ltimes P, \ A^P:= A\ltimes P.\]
We first point out the the connection $\sigma$ induces connections $\sigma_P$ on all the groupoids $G_P$. 
Indeed, for the tangent space of $G^P$, one has the following pull-back diagram:
\[ \xymatrix{
T_{(g, p)} G^P \ar[r]^-{d\textrm{pr}_1}\ar[d]_-{ds} &T_gG\ar[d]^-{ds}\\
T_pP \ar[r]^-{d\nu} & T_{\nu(p)}M }\]
Hence, explicitly, writing
\[ T_{(g, p)} G^P= \{ (X_g, V_p)\in T_gG\times T_pP: (ds)_g(X_g)= (d\nu)_p(V_p)\},  \]
we have
\[ \sigma^{P}_{p, g}(V_p)= (\sigma_g(d\nu(V_p)), V_p).\]
Hence, for each $P$, we obtain an explicit realization $\textrm{Ad}_{\sigma^P}$ of the adjoint representation of $G^P$. Again,
we denote it simply by $\textrm{Ad}_{P}$ or even by $\textrm{Ad}$ when no confusion arises.  

First we want to discuss the differentiable cohomology of the groupoids $G_P$ with coefficients $\uS^q \textrm{Ad}^*$ in the
case where $P$ is a free $G$-manifold. If $G$ acts on $P \stackrel{\nu}{\rightarrow} M$, we say that 
$P$ is a free $G$-manifold if the space of orbits $B:= P/G$ is a smooth manifold with the projection
$\pi: P\rmap B$ smooth submersion and 
\[ G\times_M P \rmap P\times_B P , \ \ (g, p)\mapsto (p,gp)\]
is a diffeomorphism, where the left hand side is the fibered product over $s: G\rmap M$ and $\nu:P\rmap M$. One also says
that $\pi: P\rmap B$ is a principal $G$-bundle. 

\begin{proposition}\label{banal case}
Let $P$ be free $G$-manifold in the previous sense, with projection $\pi: P\rmap B$. Then

\begin{equation*}
H_{\mathrm{diff}}^p(G_P;\uS^q\uAd^*)= \left\{
\begin{array}{ll}
\Omega^q(B) &\text{if }\, p=-q,\\
0 & \text{if }\, p> -q.\\
\end{array}
\right. \end{equation*} More precisely, the sequence
$$\xymatrix{
0\ar[r]&\Omega^q(B)\ar[r]^-{\pi^*}& C(G_P;\uS^q\uAd^*)^{-q}\ar[r]^-D
&C(G_P;\uS^q\uAd^*)^{1-q}\ar[r]^-D & \cdots\\}$$
is exact. Here, according to our notations, $\uAd^*= \uAd_{P}^{*}$ is the representation up to homotopy of $G_P$ induced by the connection $\sigma$ and $D$
is the structure operator of $\uS^q\uAd^*$. \\
\end{proposition}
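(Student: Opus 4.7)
The overall plan is to reduce the statement to a computation of differentiable cohomology of $G_P$ with coefficients in an ordinary vector bundle on $B$, and then to invoke Morita invariance; the identification with $\pi^*$ at the bottom will be recovered by tracing generators through the resulting chain of quasi-isomorphisms.

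First I would analyze the adjoint complex and its symmetric powers. The complex underlying $\uAd_P$ is $A_P=\nu^*A\xrightarrow{\tilde\rho_P} TP$, and the principal-bundle hypothesis forces $\tilde\rho_P$ to be injective with image $\ker d\pi$; hence $\uAd_P$ is regular with $\mathcal{H}^1(\uAd_P)=\pi^*TB$ and $\mathcal{H}^0=0$, so dually $\uAd_P^*$ is regular with cohomology $\pi^*T^*B$ in degree $-1$. Picking a horizontal distribution complementary to $\ker d\pi$ produces a splitting $T^*P\cong A_P^*\oplus \pi^*T^*B$ under which $-\tilde\rho_P^*$ is projection on the first summand, so $\uAd_P^*$ splits as a direct sum of a Koszul-type contractible complex and $\pi^*T^*B$ concentrated in degree $-1$. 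Using axiom (S1) together with the distributivity of symmetric powers over direct sums of complexes and the acyclicity of symmetric powers of a contractible complex, the underlying complex of $\uS^q\uAd_P^*$ is quasi-isomorphic to $\pi^*\Lambda^qT^*B$ concentrated in degree $-q$ (the exterior power appears because the surviving generator sits in odd degree).

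Now I would apply Theorem \ref{spectral} to the regular unital representation $\uS^q\uAd_P^*$ to obtain a quasi-isomorphism with a representation up to homotopy supported on $\pi^*\Lambda^qT^*B$ in degree $-q$; being concentrated in a single degree this collapses to an ordinary $G_P$-representation on $\pi^*\Lambda^qT^*B$, namely the one induced on exterior cotangent powers by the connection (via axiom (S2)). Invariance of differentiable cohomology under quasi-isomorphism then gives
\[
H^n_{\mathrm{diff}}(G_P;\uS^q\uAd_P^*)\;\cong\; H^{n+q}_{\mathrm{diff}}\bigl(G_P;\pi^*\Lambda^qT^*B\bigr).
\]
Since $P\to B$ is a principal $G$-bundle, $G_P$ is Morita equivalent to the unit groupoid on $B$, with $\pi^*\Lambda^qT^*B$ corresponding to the ordinary vector bundle $\Lambda^qT^*B$ on $B$; Morita invariance of differentiable cohomology with coefficients then delivers $\Omega^q(B)$ in degree zero and zero in positive degrees, yielding the asserted numerical vanishing.

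The main obstacle is to verify that the resulting abstract isomorphism in degree $-q$ is induced by $\pi^*$ itself. A cohomology class there is represented by a form in $C^0(G_P;(\uS^q\uAd_P^*)^{-q})=\Omega^q(P)$, and one must identify the cocycle condition with the classical notion of being basic for $\pi$. Concretely, $D\eta=0$ for $\eta\in\Omega^q(P)$ amounts to two conditions: the $R_0$-component of $D$ says that the contractions $\iota_{\tilde\rho_P(\alpha)}\eta$ vanish for every section $\alpha$ of $A_P$, i.e.\ $\eta$ is horizontal for $\pi$, while the $R_1$-component says that $\eta$ is invariant under the connection-induced quasi-action on $\Lambda^qT^*P$; because $\pi\circ g=\pi$ forces $d\pi\circ\lambda_{(g,p)}=d\pi$, these two conditions together force $\eta$ to be basic and hence a pullback of a unique $q$-form on $B$. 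The spectral-sequence and Morita steps give the correct dimensions almost formally, but pinning down that the canonical identification is precisely $\pi^*$ is the delicate point and is where the bulk of the work lies.
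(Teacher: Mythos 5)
Your proof is correct, and its first half coincides with the paper's: both establish that freeness of the action makes $\uAd_P$ regular with cohomology $\pi^*TB$ concentrated in degree $1$, deduce that $\uS^q\uAd_P^*$ is regular with cohomology $\pi^*(\Lambda^qT^*B)$ concentrated in degree $-q$, and then invoke Theorem \ref{spectral} to replace $\uS^q\uAd_P^*$ by a quasi-isomorphic ordinary representation on $\pi^*(\Lambda^qT^*B)[-q]$, which axiom (S2) identifies as the tautological one. The divergence is in the last step: the paper observes that $G_P$ is proper and applies its own vanishing theorem (Theorem \ref{vanishing}), obtaining $\Gamma(P,\pi^*\Lambda^qT^*B)_{\mathrm{inv}}=\Omega^q(B)$ in degree $-q$ and zero above the diagonal, whereas you invoke Morita invariance of differentiable cohomology with coefficients to pass to the unit groupoid over $B$. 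Both routes work; the paper's has the advantage of being self-contained (it relies on a theorem proved earlier in the same paper, while Morita invariance with coefficients is only cited from \cite{Cra2} and not stated there in the generality you use), and yours makes the geometric reason for the answer somewhat more transparent. Two smaller differences: you compute the cohomology of $S^q$ of the underlying complex by splitting off a contractible Koszul-type summand and using distributivity of symmetric powers, where the paper simply asserts the result; and you verify directly that a $D$-closed element of $\Omega^q(P)$ is horizontal and invariant, hence basic, which pins down that the edge map is indeed $\pi^*$ --- a point the paper leaves implicit in the identification of invariant sections of $\pi^*\Lambda^qT^*B$ with pullbacks from $B$.
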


One explanation for the start of this sequence: since the complex $\uS^q\uAd^*$ vanishes in degrees strictly less then $-q$ and
is $\Lambda^qT^*P$ in degree $-q$,
\[ C(G_P;\uS^q\uAd^*)^{-q}= \bigoplus_{k-l= -q} C^k(G_P; (\uS^q\uAd^*)^{-l})= C^0(G_P; \Lambda^qT^*P)= \Omega^q(P),\]
and then $\pi^*$ is just the pull-back of forms. 

\begin{proof}
Since the action is free, we deduce that $\textrm{Ad}_{P}$ is a regular complex
(with cohomology bundle trivial in all degrees, except degree $1$ where it is
the normal bundle to the orbits $TP/\textrm{Im}(\tilde{\rho}_P)\cong \pi^*TB$). We deduce that
the cochain complex $S^q(\textrm{Ad}_{P}^{*})$ is a regular complex, with cohomology 
\begin{equation*}
{\cal{H}}^{k}(S^{q}(\uAd^*(P)))\cong \left\{
\begin{array}{ll}
\pi^*(\Lambda^qT^*B) &\text{if }\, k=-q,\\
0 & \text{if }\, k\neq -q.\\
\end{array}
\right. \end{equation*}
Hence Theorem \ref{push a representation to the cohomology} tells us that:
\begin{itemize}
\item There is an induced structure of representation up to homotopy structure $D_H$ on the
cohomology complex ${\cal{H}}^k(\uS^q(\uAd^*(P)))$. 
\item There is a quasi-isomorphism $\Phi$ between $\uS^q(\uAd^*(P))$ and $({\cal{H}}(S^q(\uAd^*(P))), D_H)$.
\end{itemize}
In our case, since the cohomology complex is concentrated in degree $-q$:
\[ {\cal{H}}(S^q(\uAd^*(P)))\cong \pi^*(\Lambda^qT^*B)[-q],\]
all that $D_H$ contains is the action of the groupoid $G_P$ (the $R_1$-term). On the other
hand, due to the quasi-action axiom on $\uS^q$, we see that this quasi-action is just the tautological one:
the action of an arrow $g: p\rmap gp$ on $\pi^*(\Lambda^qT^*B)$, which should be a map
\[ \Lambda^qT^{*}_{x}B\rmap \Lambda^qT^{*}_{x}B ,\ \ \ x= \pi(p)= \pi(gp),\]
is just the identity. Since $\Phi$ is a quasi-isomorphism, we have
\begin{equation*}
H_{\mathrm{diff}}^p(G_P;\uS^q\uAd^*)\cong
H_{\mathrm{diff}}^p(G_P;{\cal{H}}(\uS^q\uAd^*))\cong
H_{\mathrm{diff}}^p(G_P;\pi^*(\Lambda^q(T^*B))[-q]).
\end{equation*}
Also, since $P$ is proper, we know from Theorem
\ref{vanishing} that the last cohomology vanishes except for degree $p=-q$, where it equals to 
\begin{equation*}
\Gamma(P,\pi^*(\Lambda^q(T^*B)))_{\textrm{inv}}=\Omega^q(B).
\end{equation*}
\end{proof}

Next, we consider a morphisms in $\textrm{Man}_G$, 
\[ f: Q\rmap P.\]
Hence $f$ commutes with the maps into $M$ and it is $G$-equivariant. We show that $f$ induces a strict morphism
\[ f_*: \textrm{Ad}_{Q}\rmap f^* \textrm{Ad}_{P}.\]
Being strict, it means it only has a $0$-component, i.e. $f_*$ will be a map between the underlying cochain complexes. With the identifications for the fibers
\[ \textrm{Ad}_{Q, q}= A_{\nu(q)}\oplus TQ_q,\ f^*\textrm{Ad}_{P, q}= A_{\nu(q)}\oplus T_{f(q)}P,\]
we define $f_*$ to be $(\textrm{Id}, (df))$.

\begin{lemma}\label{the flat maps} $f_*: \textrm{Ad}_{Q}\rmap f^* \textrm{Ad}_{P}$ is a strict morphism of representations up to homotopy of $G^Q$.
\end{lemma}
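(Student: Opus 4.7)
By Proposition \ref{decomposition with respect to F}, the claim reduces to verifying, for each $k\geq 0$, the strict morphism equation $f_*\circ R_k^Q = R_k^{f^*\mathrm{Ad}_P}\circ f_*$. Since the adjoint representation has only $R_0,R_1,R_2$, only the cases $k=0,1,2$ need to be addressed. By definition of the pull-back functor applied to the Lie groupoid morphism $F:G^Q\to G^P$, $F(g,q)=(g,f(q))$, the right-hand side evaluated at $(g_1,q_1),\ldots,(g_k,q_k)$ is $R_k$ of $\mathrm{Ad}_{\sigma^P}$ evaluated at $F(g_1,q_1),\ldots,F(g_k,q_k)$; so the task is to compare the structure operators of $\mathrm{Ad}_{\sigma^Q}$ with those of $\mathrm{Ad}_{\sigma^P}$ at corresponding arrows, transported along $f_*=(\mathrm{Id},df)$.

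The key technical input is the naturality of the connection under $F$: using the explicit formula $\sigma^P_{(g,p)}(V_p)=(\sigma_g(d\nu_P V_p),V_p)$ and $\nu_Q=\nu_P\circ f$, a direct calculation gives
\[
dF\circ\sigma^Q_{(g,q)} \;=\; \sigma^P_{F(g,q)}\circ df,
\]
and dually the analogous compatibility holds for the connection one-forms $\omega^Q$ and $\omega^P$. The translation maps $l$, $r$ and the multiplication $dm$ are intrinsic to the groupoids, so they also respect $F$ in the evident sense.

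Equipped with this naturality, the three equations follow. For $k=0$: the anchor of $A^Q=\nu_Q^*A$ is the infinitesimal action $\tilde\rho_Q$, and $df\circ\tilde\rho_Q=\tilde\rho_P$ is the infinitesimal form of $G$-equivariance of $f$; this is the chain map condition. For $k=1$ on the $TQ$-piece, $\lambda^Q_{(g,q)}(V_q) = dt^Q(\sigma^Q_{(g,q)}(V_q))$, so $f\circ t^Q = t^P\circ F$ combined with the naturality of $\sigma$ yields $df\circ\lambda^Q_{(g,q)} = \lambda^P_{F(g,q)}\circ df$. For $k=1$ on the $A$-piece, apply $dF$ to both sides of the identity $\sigma^Q_{(g,q)}(\rho^Q\alpha) = l^Q_{(g,q)}(\alpha) + r^Q_{(g,q)}(\lambda^Q_{(g,q)}\alpha)$ (valid in any groupoid, see (\ref{easy sigma and D^0})) and compare with the analogous identity in $G^P$; the difference reduces to $r^P_{F(g,q)}(\lambda^Q_{(g,q)}\alpha - \lambda^P_{F(g,q)}\alpha) = 0$, forcing equality by the injectivity of $r^P$. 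For $k=2$, the basic curvature $K^{\mathrm{bas}}_\sigma(g,h)V = -\omega_{gh}\circ dm(\sigma_g(\lambda_h V),\sigma_h V)$ is assembled from $\sigma$, $\omega$ and $dm$, each of which respects $F$, so the same naturality yields $f_*\circ K^{\mathrm{bas}}_{\sigma^Q} = K^{\mathrm{bas}}_{\sigma^P}\circ f_*$. The main bookkeeping obstacle is the $A$-component of the quasi-action, which is defined only implicitly through $\omega$ and $l$; it is precisely there that the identity $\sigma_g(\rho\alpha) = l_g(\alpha) + r_g(\lambda_g\alpha)$ together with the injectivity of $r^P$ carry the argument.
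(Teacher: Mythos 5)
Your proof is correct and follows essentially the same route as the paper: a direct verification of the three structure-operator identities $f_*\circ R_k = R_k\circ f_*$ ($k=0,1,2$) using the explicit formula for the induced connection $\sigma^P_{(g,p)}(V)=(\sigma_g(d\nu V),V)$. The only difference is organizational — the paper expresses the operators of $\mathrm{Ad}(G^Q)$ directly in terms of data on $G$ (so that compatibility with $f$ becomes immediate), whereas you compare $G^Q$ and $G^P$ via the naturality relation $dF\circ\sigma^Q=\sigma^P\circ df$ and its consequences for $\omega$, $l$, $r$ and $dm$; the two bookkeepings amount to the same computation.
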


\begin{proof} We observe that one can describe the structure operators for the adjoint representation
of an action groupoid $G^Q$ in terms of those of $G$. Given the connection $\sigma$ on $G$ there is an 
identification $T(G\ltimes Q)_{(g,q)}\cong A_{t(g)}\oplus TQ_q$. With respect to this identification we have:
\begin{eqnarray*}
\lambda^{G^Q}_{g,q}(\alpha)&=&\lambda^G_g(\alpha),\\
\lambda^{G^Q}_{g,q}(X)&=&(d\mu)_{(g,q)}(\sigma^{Q}_{(g,q)}(X)),\\
R^{G^Q}_2((g,q),(h,l))X&=&R^G_2(g,h)(d\nu)_l(X)).
\end{eqnarray*}
Here $\alpha \in A^Q_{q}\cong A_{\nu(q)}$, $X \in TQ_q$, $\mu$ denotes the action map $G\ltimes P \rightarrow P$
and $\nu: Q\rmap M$ is the map associated to the $G$-space $Q$. 
Using this description one immediately shows that $f_*$ gives a strict map of representations.
\end{proof}

Passing to duals, and using the canonical morphism $C(G^P; E)\to C^*(G^Q; f^*E)$ for $E= (\textrm{Ad}_{P})^*\in \uRep^{\infty}(G^P)$,
we find induced maps of complexes
\begin{equation}\label{f-unu} 
f^*: C(G^P; \textrm{Ad}_{P}^*)\rmap C(G^Q, \textrm{Ad}_{Q}^*).
\end{equation}
It is easy to see that this construction is functorial: if $P\stackrel{f}{\rmap} Q\stackrel{g}{\rmap}R$ are morphisms in $\textrm{Man}_G$,
then $(g\circ f)_*= g_*\circ f_*$.

Using the axioms for $\uS^q$ on strict morphisms, we find that
\begin{equation}\label{f-doi} 
f^*= S^q(f^*): C(G^P; \uS^q(\textrm{Ad}_{P}^*))\rmap C(G^Q, \uS^q(\textrm{Ad}_{Q}^*))
\end{equation}
is a morphism of complexes. 

We will apply this construction to the following standard ``simplicial resolution'' of $M$ by free $G$-manifolds:
\[  
\xymatrix{
\ldots P^{(2)} \ar@ <+9 pt>[r]^{\flat_0} \ar@ <0 pt>[r]^{\flat_1}\ar@ <-9 pt>[r]_{\flat_2} &    P^{(1)}\ar@ <+4 pt>[r]^{\flat_0} \ar@ <-4 pt>[r]_{\flat_1} & P^{(0)}\ar[r]^{\flat_0} & M\ .
}  
\]
Here we use the $G$-spaces already mentioned in Example \ref{action-nerve} of the preliminaries:
\[P^{(m)}= G_{m+1}, \ \ \nu^{(m)}= t: P^{(m)}\rmap M, (g_1, g_2, \ldots , g_{m+1})\mapsto t(g_1)\]
and $G$ acts on $P^{(m)}\stackrel{\nu^{(m)}}{\rmap}M$ by 
\[ g\cdot (g_1, g_2, \ldots , g_{m+1})= (gg_1, g_2, \ldots , g_{m+1}).\]
We denote by $G^{(m)}$ the resulting action groupoid (a groupoid over $P^{(m)}$).

Also, $\flat_i$ are the $G$-equivariant maps
\[ \flat_i= d_{i+1}: P^{(m)}\rmap P^{(m-1)} \ \ \textrm{for}\ i= 0,\dots, m.\]
The discussion above gives us a sequence of complexes
\begin{eqnarray}\label{seq-to-prove}
C(G; \uS^q\textrm{Ad}^*)\stackrel{\flat^{*}}{\rmap} C(G^{(0)}; \uS^q\textrm{Ad}^*) \stackrel{\flat^{*}}{\rmap}C(G^{(1)}; \uS^q\textrm{Ad}^*) \stackrel{\flat^{*}}{\rmap} \ldots 
\end{eqnarray}
where 
\begin{equation*}
\flat^*:C(G^{(m-1)};\uS^q\uAd^*)\rightarrow C(G^{(m)};\uS^q\uAd^*)
\end{equation*}
 are defined by:
\begin{equation*}
\flat^*=\sum_{i=0}^m (-1)^{i}{\flat_i}^*.
\end{equation*}

\begin{proposition} 
The sequence (\ref{seq-to-prove}) is an exact sequence of complexes.
\end{proposition}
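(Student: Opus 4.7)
Plan: The proof proceeds by constructing an explicit contracting homotopy for the augmented cosimplicial complex.

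Each face map $\flat_i: P^{(m)}\to P^{(m-1)}$ is $G$-equivariant, so by Lemma \ref{the flat maps} combined with axioms (S3)--(S4) it induces a strict morphism on the coefficient complexes; in particular, $\flat^*$ preserves the bidegree $(k,l)$. It therefore suffices to prove exactness of the augmented cosimplicial abelian group at each fixed bidegree.

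Using the canonical identifications $P^{(m)} = G_{m+1}$ and $G^{(m)}_k \cong G_{k+m+1}$ --- a string of $k$ composable arrows in the action groupoid $G^{(m)}$ is equivalently a string of $k+m+1$ composable arrows in $G$, with $t$ corresponding to composition of the first $k+1$ arrows --- one realizes $C^k(G^{(m)}; \uS^q\uAd^*)$ as sections over $G_{k+m+1}$ of a suitable pullback bundle, and the cosimplicial coboundary $\flat^*$ becomes the alternating sum $\sum_{i=0}^{m+1}(-1)^i d^*_{k+i+1}$ of pullbacks along the last $m+2$ face maps of $NG$.

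The contracting homotopy is provided by the simplicial degeneracy $\sigma_m := s_k : G_{k+m} \to G_{k+m+1}$ that inserts a unit arrow at position $k$; at the $P^{(\bullet)}$-level this corresponds to the map $(h_1,\ldots,h_m)\mapsto (1_{t(h_1)}, h_1,\ldots,h_m)$. Setting $K^m := \sigma_m^*$, the standard simplicial identities $d_{k+1}\sigma_m = \mathrm{Id}$ and $d_{k+j+1}\sigma_m = \sigma_{m-1} d_{k+j}$ for $j\geq 1$ yield a short telescoping computation producing the contracting homotopy identity
\[
 K^{m+1} \flat^* + \flat^* K^m = \mathrm{Id},
\]
with the augmentation identity $K^0 \flat^* = \mathrm{Id}$ following similarly from $d_{k+1}\sigma_0 = \mathrm{Id}$ alone. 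This gives exactness at each spot.

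The main obstacle is defining $K^m$ on the coefficient bundle, since at the $P^{(\bullet)}$-level $\sigma_m$ is not $G$-equivariant (insertion at the beginning breaks equivariance with the left action on the first factor), so Lemma \ref{the flat maps} does not directly supply the bundle-component of $K^m$. This is resolved by exploiting the special structure of the adjoint representation: the tangent direction introduced by the inserted unit $1_{t(h_1)}$ transports canonically via the $G$-equivariant face map $d_1$ of $P^{(m)}$, since the non-equivariance of $\sigma_m$ lies entirely in a direction killed by the projection $\pi_{k,m}:G_{k+m+1}\to P^{(m)}$ that defines the coefficient bundle. The axioms (S3)--(S4) then ensure that this canonical transport descends to the symmetric power $\uS^q\uAd^*$, so $K^m$ is intrinsically well-defined at the cochain level and the contracting homotopy identity holds also on coefficients, completing the proof.
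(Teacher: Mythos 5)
Your combinatorial skeleton is exactly the paper's: the contracting homotopy is pullback along the unit-insertion map $\sigma_0\colon P^{(m-1)}\rmap P^{(m)}$, $(h_1,\dots,h_m)\mapsto(1_{t(h_1)},h_1,\dots,h_m)$ (applied with the $G_k$-part of a string held fixed), the identities $\flat_0\circ\sigma_0=\mathrm{id}$ and $\flat_i\circ\sigma_0=\sigma_0\circ\flat_{i-1}$ for $i\geq 1$ are the ones the paper uses, and the telescoping computation is the same. You also correctly isolate the one delicate point: $\sigma_0$ is not $G$-equivariant, so Lemma \ref{the flat maps} does not hand you the induced map on coefficients. (A minor slip: the coboundary into $C(G^{(m)};\uS^q\uAd^*)$ is an alternating sum of $m+1$ pullbacks, $\sum_{i=0}^{m}(-1)^i\flat_i^*$, not $m+2$.)

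The gap is in your resolution of that delicate point, which as stated does not work. First, axiom (S3) applies only to strict morphisms of representations up to homotopy, and the map induced by $\sigma_0$ is not a morphism of representations up to homotopy at all, precisely because $\sigma_0$ is not equivariant; the only tool available is the plain symmetric power of a map of graded vector bundles, i.e.\ axiom (S1). Second, the claim that ``the non-equivariance lies entirely in a direction killed by the projection defining the coefficient bundle'' is false: the coefficient bundle is $t^*\uS^q\uAd^{*}_{P^{(m)}}$, whose fibre genuinely depends on the target point, and for $\zeta=(g_1,\dots,g_k,h)\in G^{(m-1)}_k$ the two relevant base points $t(\tilde\sigma_0(\zeta))=(g_1\cdots g_k,h_1,\dots,h_m)$ and $\sigma_0(t(\zeta))=(1,g_1\cdots g_kh_1,h_2,\dots,h_m)$ are different points of $P^{(m)}$. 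They are related by $\flat_0=d_1$, as you observe, but what you would then need on the $TP$-summand of $\uAd$ is a wrong-way map against the submersion $d_1$, and no such ``canonical transport'' is exhibited; producing one requires an additional input (this is exactly the missing content). The paper closes this gap differently: it observes that the construction of $f_*$ and of the induced map $f^*$ on the cochain complexes never uses equivariance of $f$, only that $f$ commutes with the projections to $M$, and that equivariance is needed solely for compatibility with the structure operators $D$ --- which is irrelevant here, since a contracting homotopy proving exactness of the rows need not be a map of complexes. You should either adopt and spell out that observation, or construct explicitly the transport you invoke; as written, the lift of the homotopy to the coefficient bundles is unproved.
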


\begin{proof}
The fact that $(\flat^*)^2=0$ follows from the simplicial relations. 
For the exactness, we would like to remark that it is a statement 
which does not involve the structure operators $D$. To prove it, we
will produce a contracting homotopy; according to our remark, 
we do not have to care about compatibility with $D$. With that
in mind, note that our general construction of the maps $f_*$ and $f^*$
at the level of complexes,
for $f: Q\rmap P$ a map between two $G$-manifolds, do not really
require $f$ to be equivariant if we do not care about compatibility with the
structure operators $D$. What is important is that $f$ commutes with the projections
$\nu$ into $M$. More precisely, for such an $f$, the $f_*$ of Lemma \ref{the flat maps}
still makes sense as a morphisms of graded vector bundles,
and then the map $f^*$ appearing in (\ref{f-doi}) 
is still well-defined as a map of graded vector spaces. Also, the functoriality of
the construction is still preserved. In our case, we consider
\[ \sigma_0: P^{(m-1)}\rmap P^{(m)}, \ (g_1, \ldots , g_m)\mapsto (1, g_1, \ldots , g_m)\]
and the induced maps
\begin{equation*}
\sigma^*_0= S^q(\sigma^*_0): C(G^{(m)}; S^q(\uAd^*)) \rightarrow
C(G^{(m-1)};S^q(\uAd^*)).
\end{equation*}
Using the last remark on functoriality, we immediately derive the following equations
\begin{equation*} \sigma^*_0 \flat^*_i= \left\{
\begin{array}{ll}
\uId &\text{if } i=0,\\
\flat^*_{i-1} \sigma^*_0   & \text{if } i> 0.\\
\end{array}
\right. \end{equation*}\\
By a standard computation, these imply that $\sigma_{0}^{*}$ induces a homotopy operator:
\begin{eqnarray*}
\sigma^*_0\flat^*+\flat^*\sigma^*_0&=&\sum_{i=0}^{m}(-1)^{i}\sigma^*_0\flat^*_i+
\sum_{i=0}^{m-1}(-1)^{i}\flat^*_i\sigma^*_0\\
&=&\uId+\sum_{i=1}^{m}(-1)^{i}\flat^*_{i-1}\sigma^*_0+
\sum_{i=0}^{m-1}(-1)^{i}\flat^*_i\sigma^*_0
=\uId.\end{eqnarray*}\\
We conclude that sequence is exact.
\end{proof}

The next step is to put together the previous two propositions. In doing so, we mention here that each of the spaces
$P^{(m)}$ is a principal $G$-bundle over $G_m$ with bundle map $\pi= d_0: P^{(m)}\rmap G_m$. Hence we can
organize the various exact sequences coming from the two propositions  into a double complex with co-augmented
rows and columns, one for each $q\geq 0$:
rows and columns:
$$\xymatrix{
 & \vdots &\vdots &\vdots  & \\
0\ar[r]& C(G;\uS^q\uAd^*)^{2-q} \ar[r]^-{{\flat_0}^*}\ar[u]^D &
C(G^{(0)};\uS^q\uAd^*)^{2-q}
\ar[r]^-{\flat^*}\ar[u]^D & C(G^{(1)};\uS^q\uAd^*)^{2-q}\ar[r]^-{\flat^*}\ar[u]^D& \dots\\
0\ar[r]&C(G;\uS^q\uAd^*)^{1-q} \ar[r]^-{{\flat_0}^*}\ar[u]^D &
C(G^{(0)};\uS^q\uAd^*)^{1-q}
\ar[r]^-{\flat^*}\ar[u]^D & C(G^{(1)};\uS^q\uAd^*)^{1-q}\ar[r]^-{\flat^*}\ar[u]^D& \dots\\
0\ar[r]&C(G;\uS^q\uAd^*)^{-q} \ar[r]^-{{\flat_0}^*}\ar[u]^D &
C(G^{(0)};\uS^q\uAd^*)^{-q}
\ar[r]^-{\flat^*}\ar[u]^D & C(G^{(1)};\uS^q\uAd^*)^{-q}\ar[r]^-{\flat^*}\ar[u]^D& \dots\\
& & \Omega^q(M) \ar[u]^{{d_0}^*} \ar[r]^{\delta}
&\Omega^q(G)\ar[u]^{{d_0}^*}
\ar[r]^{\delta}& \dots  \\
& & 0\ar[u]&0\ar[u]&}$$
Here the vertical operator $D$ is 
the differential corresponding to the representations up to homotopy $\uS^q\uAd^*$- as in Proposition \ref{banal case}
which we can now use. The horizontal $\flat^*$ is the map in the previous proposition. 
Since, by Lemma \ref{the flat maps}, each of the maps $\flat^*_i$ commutes with $D$, so does
$\flat^*$. Hence we have a double complex with co-augmentations of the rows and of the columns,
in which all the co-augmented rows and columns are are exact. It follows that the vertical co-augmentation complex
and the horizontal one have isomorphic cohomology (the inclusions by $\flat^{*}_{0}$ and $d_{0}^{*}$, respectively,
into the total complex of the non co-augmented double complex, are quasi-isomorphisms). This proves the theorem.

\end{document}